\documentclass[11pt]{article}

  \usepackage{natbib,ae,amsmath,amssymb,graphicx,color}
\usepackage{enumerate}
  
\usepackage{amsmath}
\usepackage{amsfonts}
\usepackage{amsthm}
\usepackage{graphicx}
\usepackage{geometry}                
\usepackage{amssymb}
\usepackage{epstopdf}
\DeclareGraphicsRule{.tif}{png}{.png}{`convert #1 `dirname #1`/`basename #1 .tif`.png}

\newcounter{figian}
\setcounter{figian}{1}

%\textheight 23.6cm
%\textwidth 16cm
%\topmargin -.2in \headheight 0in \headsep 0in
%\oddsidemargin 0in \evensidemargin 0in
%\topskip 28pt

%\newcommand{\nat}{\ensuremath{\mathbb{N}}}
%\newcommand{\reals}{\ensuremath{\mathbb{R}}}
%\newcommand{\complex}{\ensuremath{\mathbb{C}}}
%\newcommand{\rat}{\ensuremath{\mathbb{Q}}}
%\newcommand{\integers}{\ensuremath{\mathbb{Z}}}

% MATH -----------------------------------------------------------

% shorthand

  % lie algebras

  % open font

\newtheorem{lemma}{Lemma}[section] 
\newtheorem{propos}[lemma]{Proposition}

\newtheorem{theorem}[lemma]{Theorem}
\newtheorem{cor}[lemma]{Corollary}

\newtheorem{defin}[lemma]{Definition}

\newtheorem{conjec}[lemma]{Conjecture}

% various cross products

%\newcommand{\proof}{{\bf Proof\ }}

\newcommand{\nquad}{\kern-60pt}

%\newcommand{\align}[1]{\begin{eqnarray*}#1\end{eqnarray*}}
% e.g.\align{LHS &&=RHS1\\ &&=RHS2\\ &&=RHS3....}

\title{\textbf{Limits to measurement in experiments governed by algorithms}}
\author{\textbf{Edwin J.
  Beggs\dag, Jos\'e F\'elix Costa\ddag\ and J. V.\ Tucker\dag}}

\begin{document}

% \label{firstpage}
   
\maketitle
   
   \dag School of Physical Sciences, Swansea University,  Singleton Park, Swansea, SA2 8PP, Wales, United Kingdom
   
   \ddag Instituto Superior T\'{e}cnico, Technical University of Lisbon, Lisbon, Portugal \\ and Centro de Matem\'atica e Aplica\c c\~ oes Fundamentais, University of
  Lisbon, Lisbon, Portugal

\label{firstpage}
\maketitle
\begin{abstract} 
\noindent We pose the following question: \textit{If a physical experiment were to be completely controlled by an algorithm, what effect would the algorithm have on the physical measurements made possible by the experiment?}

In a programme to study the nature of computation possible by physical systems, and by algorithms coupled with physical systems, we have begun to analyse (i) the algorithmic nature of experimental procedures, and (ii) the idea of using a physical experiment as an oracle to Turing Machines. To answer the question, we will extend our theory of experimental oracles in order to use Turing machines to model the experimental procedures that govern the conduct of physical experiments. First, we specify an experiment that measures mass via collisions in Newtonian Dynamics; we examine its properties in preparation for its use as an oracle. We start to classify the computational power of polynomial time Turing machines with this experimental oracle using non-uniform complexity classes. Second, we show that modelling an experimenter and experimental procedure algorithmically imposes a limit on what can be measured with equipment. Indeed, the theorems suggest a new form of uncertainty principle for our knowledge of physical quantities measured in simple physical experiments. We argue that the results established here are representative of a huge class of experiments. 
\end{abstract}

\section{Introduction}\label{sec:intro}

Consider performing a simple experiment to measure a physical quantity. The quantity, the method, the equipment and the exact experimental procedure that together constitute the experiment are based upon, or belong to, a physical theory --- supposing the experiment to be sufficiently simple to involve only one theory. The experiment is carried out by a technician following the experimental procedure. The experimental procedure is a sequence of instructions that initialise and set parameters, control and observe the equipment, record, calculate and display results. A more modern image would introduce a computer to help the technician manage the experiment. Indeed, it seems a small step to imagine the experiment performed \textit{completely} by a computer.  We formulate the following question:
\begin{center}
\textit{If a physical experiment were to be completely controlled by an algorithm, what effect would the algorithm have on the physical measurements made possible by the experiment?}
\end{center}
This is not a small step for theorists. Notice that the technician does not belong to the physical theory: the concept plays only an informal and unanalysed role in our understanding of measurement. The role of the technician is to implement a procedure by following a sequence of instructions that are precisely formulated in terms of the physical theory. Now, the sequence of instructions resembles an algorithmic procedure. Therefore, the idea of replacing the informal concept of a technician by the formal concept of an algorithm enables us to \textit{contemplate} a new theoretical analysis of experimentation and measurement, one in which new properties, such as the process of performing an experiment, can be analysed using the methods of computability theory. In this paper we will show how the theory of algorithms can be used to make a deeper analysis of experiments and measurements.

For many years we have been thinking about what can be computed by experimenting with physical systems. In a series of papers \cite{beggstucker:06,beggstucker:07a,beggstucker:07b,beggsetal:08b,beggsetal:08c,beggsetal:08f}, we have begun to develop a methodology and mathematical theory to study the nature and limits of computations made possible by (i) physical systems in isolation and (ii) physical systems combined with algorithms. With the aims of 

(i) understanding the roles of data representations, procedures and equipment, and 

(ii) determining the computational powers of physical systems, 
\\
we introduced four principles in \cite{beggstucker:06,beggstucker:07a,beggstucker:07b} and, later, a fifth involving the idea of using a physical experiment as an oracle to a Turing machine in \cite{beggsetal:08c,beggsetal:08f}.  We began the development of a theory of physical oracles by analysing a specific experiment and its use as an oracle to a Turing machine; the experiment measured position or distance using Newtonian Dynamics and the oracle was termed the \textit{scatter machine experiment (SME)}. In the theory of physical oracles, the Turing Machine interacts with an experiment via queries that become quite complicated and need to be managed by a \textit{protocol}. We classified computational power using non-uniform complexity classes: in polynomial time the physical oracle SME allows Turing machines to compute non-uniform complexity classes such as $P/poly$ and $BPP//log*$, depending upon assumptions about precision of data. Our early experimental case studies revealed a great deal theoretically.

To attempt to answer the question here we extend our methodology by adding a new principle (Principle 6), which defines our approach. Instead of focussing on the power of the oracle to \textit{boost} the computational power of the algorithm, we focus on the aim of 

(iii) determining the power of the algorithm to \textit{govern and control} the experiment. 
\\
We show that the extended mathematical theory of physical oracles accommodates both aims and can be used for two purposes:
\begin{center}
Boosting computation $\longleftarrow$ Turing machine + protocol + experiment $\longrightarrow$ Controlling measurement.
\end{center}
In this paper, we model an experimenter or technician following an experimental procedure by means of a Turing machine. There is a similarity between an experimental procedure and that of an algorithmic procedure for they are both sequences of instructions, though the instructions are defined by radically different theories. Thus, the model of the technician following instructions is reminiscent of Turing's model of a human calculator performing a calculation. The extension to the theory of physical oracles leads to a computational model of measurement with significant properties, including: 

(a) the time taken to perform experimental measurements is a function of accuracy; 

(b) the existence of quantities that cannot be measured by any experimental procedure applied to equipment. 
\\
The model uncovers an new form of indeterminacy and suggests an uncertainty principle that imposes a limit to our knowledge of the values of the physical quantities involved in physical experiments in Newtonian mechanics. Our methods suggest that indeterminacy is a general property of experimental measurements.

First, we specify a new experiment that measures an unknown mass in Newtonian Dynamics. The experiment uses a procedure involving controlled collisions between test particles and the unknown mass. We call the experiment the \textit{collider machine experiment (CME)}. The time taken to perform the experiment is \textit{exponential} in the desired accuracy of the measurement. In this property and its consequences, the new experiment is more representative of physical measurement generally. Timing and scheduling plays a prominent role in the extended theory.

Following the methods of \cite{beggsetal:08c,beggsetal:08f}, we start to classify the computational power of this experimental oracle using non-uniform complexity classes. However, the lower bound results here differ from those of the experiment in \cite{beggsetal:08c,beggsetal:08f} for reasons that are significant (see theorems in Section \ref{sec:collidercomplexity} and the discussion in Section \ref{cme&sme}):

\begin{theorem}
Turing machines equipped with the collider machine experiment as an oracle can compute the following complexity classes in polynomial time: assuming the mass of the test particle can be set with

Infinite precision: $P/log*$.

Finite unbounded precision: $P/log*$.

Finite fixed precision: $BPP//log*$,
\\
assuming, in the last case, a uniform distribution for the mass of the test particle.
\end{theorem}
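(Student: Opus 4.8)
The plan is to prove each equality by matching upper and lower bounds, reusing the query-protocol framework developed for the SME in \cite{beggsetal:08c,beggsetal:08f} but replacing its timing analysis with the one dictated by the collider's physics. The single fact driving everything is that a collision between a test particle of mass $\mu$ and the unknown mass $m$ sends the test particle forward when $\mu>m$ and backward when $\mu<m$, with post-collision speed proportional to $|\mu-m|/(\mu+m)$; the oracle thus behaves as a comparator for $\mu$ against $m$. Crucially, a single query resolving $|\mu-m|\sim 2^{-k}$ forces the recoiling particle to reach the detector at speed $\sim 2^{-k}$, so one query of precision $k$ costs time exponential in $k$. This is the point of departure from the SME and the reason the deterministic classes drop from $P/poly$ to $P/log*$.

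For the \textbf{upper bounds}, I would argue that within a polynomial time bound $p(n)$ the machine can only afford queries of precision $k=O(\log p(n))=O(\log n)$, and so can ever learn only the first $O(\log n)$ bits of $m$. Recording those bits as advice $f(n)$, an ordinary polynomial-time machine can predict every comparator answer the protocol would receive and thereby simulate the oracle, placing the language in $P/log*$; since the realised test mass is exact in the infinite-precision case and can be taken arbitrarily fine in the finite-unbounded case, the same simulation covers both, giving $\subseteq P/log*$. For the \textbf{lower bounds}, given $L\in P/log*$ with prefix advice of length $O(\log n)$, I would encode the coherent advice sequence into the binary expansion of a single mass $m$ and supply a protocol that binary-searches $\mu$ toward $m$: only $O(\log n)$ halvings are needed, each of precision $O(\log n)$, so each query costs $2^{O(\log n)}=\mathrm{poly}(n)$ time and the total over all queries is polynomial despite the exponential per-query cost. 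Decoding the recovered bits yields $f(n)$, after which the machine runs the $P/log*$ decider; the advice is genuinely prefix ($log*$ rather than $log$) because the bits usable at length $n$ are an initial segment of those usable at larger lengths, reflecting the one fixed real $m$. Together these give equality in the first two regimes.

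In the \textbf{finite-fixed-precision} case the realised mass is $\mu$ perturbed by an error that is, by hypothesis, uniformly distributed across the unset low-order bits, so the comparator is reliable except in a small window around $m$, inside which its answer is effectively a fair coin. I would show that repeating each comparison polynomially often and taking a majority amplifies the reliable answers while keeping the total (exponentially weighted) time polynomial, and that under the uniform distribution a query landing in the bad window is rare. Standard two-sided-error amplification then yields a $BPP$ machine, and encoding which precision levels are safe at length $n$ as advice in the probabilistic model produces the $//log*$ form, for the characterisation $BPP//log*$.

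The step I expect to be the main obstacle is this last timing-and-probability bookkeeping. One must schedule the repeated collisions so that majority voting succeeds with overwhelming probability \emph{and} the sum of the exponential per-query times, taken over all repetitions and all $O(\log n)$ precision levels, still fits the polynomial budget; and one must bound the probability of an unusable near-threshold query tightly enough under the uniform assumption to pin down exactly the $BPP//log*$ advice model rather than a weaker class or a different advice regime. By contrast, the deterministic halving argument and the prefix-advice encoding are routine once the comparator behaviour and the exponential timing law are in hand.
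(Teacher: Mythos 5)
First, note that the theorem is a lower-bound statement (the paper's own comparison table labels these entries ``complexity class lower bound'', and the text explicitly defers the upper bound to \cite{beggsetal:09c}), so the half of your plan devoted to ``matching upper bounds'' addresses a claim the theorem does not make --- and your simulation sketch for it ignores, among other things, that the oracle's answer \emph{and its timing} both carry information. For the deterministic lower bounds your route is essentially the paper's (encode the prefix advice into the unknown mass, bisect, $O(\log n)$ queries each costing $2^{O(\log n)}$), but you are missing the lemma that makes the timing claim true. The physics gives $t \approx K/|m-\mu|$, so a query at bisection depth $k$ is only guaranteed to cost $2^{O(k)}$ if the encoded mass is kept at distance at least $2^{-(k+O(1))}$ from \emph{every} $k$-bit dyadic rational; a naive binary encoding of the advice can have long runs of identical bits, making some query take arbitrarily longer than any exponential-in-$k$ schedule and hence time out. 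The paper secures this with a triple code ($0\mapsto 100$, $1\mapsto 010$, separators $001$) and Proposition \ref{teo2}, which gives $|\mu(f)-k/2^n|>2^{-(n+5)}$; the same separation is what lets the arbitrary-precision case go through, since the whole error interval around the intended test mass then lies on one side of $\mu$. Without this coding property both the schedule and the error-tolerance arguments have a hole.

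The fixed-precision case is where your proposal genuinely fails. You claim the comparator is reliable outside a small window and that ``a query landing in the bad window is rare''. With a \emph{fixed} precision $\varepsilon$ the window has constant width $2\varepsilon$: once you want bits of the unknown mass below resolution $\varepsilon$, every informative query necessarily lands inside it, and the information you need is carried by the \emph{outcome probabilities} (the bias of the coin), not by which outcome is in the majority --- majority voting at best recovers the sign of the bias, and your scheme never says how the advice is positioned relative to the only window you can ever aim at. The paper's construction is different: it fires a single intended test mass $\frac12$, places the advice real $s$ inside the precision window via $\mu=\frac12-\varepsilon/2+s\varepsilon$, and \emph{estimates} $s$ from the empirical frequencies of the three outcomes (left flag, right flag, time-out), using the combination $X=2\alpha+\gamma$ to cancel the unknown time-out halfwidth $\eta$ and Chebyshev's inequality to show $\zeta=O(2^{2k}/\delta)$ trials suffice for the $k$-th place, which is polynomial for $k=O(\log n)$ (Proposition \ref{bpplog*}). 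Your plan contains neither the estimation idea nor any treatment of the third, ``time-out'' outcome, whose unknown probability would otherwise corrupt any frequency count; as written it would not recover $\Theta(\log n)$ advice bits.
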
 

Secondly, we reverse the direction of the application of the theory to address the main question. Modelling experimental procedures algorithmically lead to results that challenge a fundamental assumption of the classical physical world, namely, that values of a physical quantity $\mu$ belong in experimental procedures as patterns of the form $\mu \pm \Delta \mu$, where theoretically the value $\Delta \mu$ can be made as small as we wish. For a century, this assumption confronted the quantum world wherein measurements cannot be made with errors as small as wish.  We show that the mathematics of computation imposes limits on what can be measured (Theorem \ref{nonmeasurable}):

\begin{theorem}
There are uncountably many masses $\mu$ such that for every experimental procedure governing the CME it is only possible to determine finitely many digits of $\mu$, even allowing arbitrary long run times for the procedure.
\end{theorem}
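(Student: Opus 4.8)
The plan is to combine a counting observation---there are only countably many experimental procedures---with a structural analysis showing that each single procedure can pin down only a topologically small set of masses to arbitrary precision; the uncountable family of unmeasurable masses then appears as the complement of a countable union. I would organise this as a Cantor-style construction that diagonalises simultaneously against all procedures, since this delivers the required uncountability directly rather than through a bare cardinality count.

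First I would fix an enumeration $P_1,P_2,\dots$ of all Turing machines that, coupled to the CME through the query protocol, act as experimental procedures; this is legitimate because such machines form a countable set. For a procedure $P$ and a mass $\mu$, say that $P$ \emph{certifies} $n$ digits of $\mu$ if, running $P$ with the oracle set to $\mu$, it eventually outputs $n$ correct digits that it will never revise. The goal is to build an uncountable set $\mathcal{U}$ of masses such that for every $\mu\in\mathcal{U}$ and every $k$ there is a finite bound on the number of digits $P_k$ can certify, even with unbounded run time.

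The heart of the argument is a per-procedure localisation lemma. Working inside any interval $I$ of candidate masses, I would use the exponential time-versus-accuracy property of the CME (to resolve a collision whose test mass lies within $2^{-n}$ of $\mu$ the detector must be read only after a time growing like $2^{n}$), together with the fact that $P$ cannot distinguish ``the test particle is extremely slow'' from ``the test particle has stopped'' in any finite time. The claim to establish is that one can always choose two disjoint closed subintervals $I_0,I_1\subset I$, centred at the critical test values $P$ would have to probe next, on which $P$ certifies no digit beyond those already forced by $I$: for masses in $I_0$ (respectively $I_1$) the next discriminating collision either never returns a verdict or returns one only after a time $P$ has not allotted, so $P$'s output is constant there. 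Iterating over $k=1,2,\dots$, refining every surviving interval at stage $k$ so as to defeat $P_k$, yields a nested system whose intersection is a nonempty perfect set $\mathcal{U}$; being perfect it is uncountable, and by construction every $\mu\in\mathcal{U}$ defeats every $P_k$ beyond a finite precision, which is exactly the assertion.

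The main obstacle is this localisation lemma, and specifically reconciling it with the clause ``even allowing arbitrarily long run times.'' A naive binary search appears to measure every irrational mass if the procedure is simply allowed to wait until each collision resolves, and the whole force of the theorem is that this is an illusion: the procedure must decide in advance, without knowing $\mu$, how long to wait, and no finite waiting schedule can succeed uniformly on a subinterval straddling a critical test value. Turning this indeterminacy into the rigorous statement---that the set of masses on which a fixed $P$ certifies unboundedly many digits has empty interior, so that the avoiding subintervals $I_0,I_1$ always exist---is where the detailed CME protocol and the halting-type ambiguity in reading the detector must be invoked. Once the lemma is secured, the Cantor construction and the uncountability of $\mathcal{U}$ are routine.
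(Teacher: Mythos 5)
Your route is genuinely different from the paper's and is sound in outline. The paper never enumerates procedures: it first proves a characterisation (Proposition \ref{nonmeasurability}) --- if $\mu$, written as blocks of $u_1$ ones, $u_2$ zeros, \dots, is measurable by \emph{any} procedure, then $(u_k)$ is bounded by a computable function, because certifying the first $a_k=u_1+\cdots+u_k$ digits forces a completed query at distance at most $2^{-a_{k+1}}$ from $\mu$ and hence $T(a_k)\ge K\,2^{a_{k+1}}$ --- and then defeats all computable bounds at once with the busy beaver function, taking $u_k\in\{\mathrm{beaver}(k),\mathrm{beaver}(k)+1\}$ to get uncountably many witnesses. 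Your Cantor-tree diagonalisation against an enumeration $P_1,P_2,\dots$ replaces the busy beaver by a direct construction and actually proves something extra (the measurable masses form a meagre set, a nice counterpoint to Corollary \ref{nreschedulepp}, which shows they have full Lebesgue measure); the paper's detour buys the block-length characterisation and the universality of bisection as by-products.

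Two places in your sketch must be pinned down. First, the localisation lemma you flag as the main obstacle follows directly from the timing inequality together with Definition \ref{measurable}, and you should not appeal to any ``halting-type ambiguity'': a procedure is a Turing machine with a \emph{computable} schedule $T$, it must print the first $n$ digits within $T(n)$ steps, and no query may time out. Given a stage interval $I$, let $c=p/2^{n_c}$ be the dyadic rational of lowest level interior to $I$; masses in $I$ on opposite sides of $c$ differ in their $n_c$-th digit, and certifying that digit requires a completed query whose test mass lies between $\mu$ and $c$ (exactly as in the proof of Proposition \ref{nonmeasurability}(1)), costing time at least $K/|c-\mu|$ by (\ref{ctimesim}). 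Choosing $\delta<K/T(n_c)$ and closed $I_0\subset(c-\delta,c)$, $I_1\subset(c,c+\delta)$ therefore defeats the procedure on all of $I_0\cup I_1$, regardless of how adaptively it chooses its queries. Note that you need this interval form --- for each fixed $P$ the measurable masses are nowhere dense --- and not merely your weaker restatement that they ``have empty interior'', since the tree construction consumes whole subintervals, not single points. Second, the countability of your enumeration is precisely where computability of the schedule enters, and it is not optional: with arbitrary total schedules there are uncountably many procedures and the theorem is false, since bisection with a sufficiently generous (non-computable) waiting time measures every non-dyadic mass, as the paper remarks after Definition \ref{measurable}. Your phrase ``the procedure must decide in advance how long to wait'' is the right intuition, but its formal content is that the waiting times form a computable schedule bounding the total running time; make that explicit and the argument closes.
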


Although not every mass can be measured --- however one varies the procedure --- we can show the following:

\begin{theorem}
There is a universal experimental procedure that measures every mass $\mu$ that can be measured by some procedure using the CME. 
\end{theorem}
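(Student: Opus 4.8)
The plan is to prove universality by an enumeration-and-confirmation argument, in the spirit of the construction of a universal machine, but with the experiment itself serving as the arbiter that selects the correct approximation. Since every experimental procedure is a Turing machine interacting with the CME through the fixed query protocol, the procedures form an effectively enumerable family $P_1, P_2, P_3, \dots$. I fix once and for all the convention that a procedure \emph{measures} a mass $\mu$ when, run against the equipment loaded with $\mu$, it outputs an infinite sequence of dyadic rationals converging to $\mu$ with a recoverable modulus; the masses that no procedure can measure are exactly the uncountable set exhibited in Theorem~\ref{nonmeasurable}. Because that set is nonempty, a purely recursion-theoretic universal procedure is ruled out, so the physics of the collider must do genuine work in the selection step.

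First I would build the universal procedure $U$ so that, on apparatus carrying the (unknown, fixed) actual mass $\mu$, it dovetails the simulations of $P_1, P_2, \dots$. Since the apparatus has a single fixed $\mu$ and each collision query may be repeated with any chosen test mass, $U$ can relay the queries issued by each simulated $P_i$ to the one experiment and feed back the answers; all simulations thus see mutually consistent oracle responses determined by the same $\mu$. By a standard time-sharing schedule, $U$ devotes, up to stage $s$, bounded attention to each of $P_1,\dots,P_s$, so that every $P_i$ is eventually simulated for unboundedly long, while only finitely much of the total budget is ever spent on any single machine before stage $s$.

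The core of the construction is the selection rule that turns the many (possibly divergent or spurious) output streams into one convergent stream. At each stage $U$ collects the current approximations emitted by the procedures simulated so far and treats each as a \emph{candidate} value for $\mu$. Before committing a candidate $\nu$ to its own output at a given precision, $U$ subjects $\nu$ to an independent \emph{experimental confirmation}: it performs further collisions of test particles against the actual apparatus, chosen so that the observed post-collision behaviour is compatible with mass $\nu$ to the required precision. $U$ then outputs, at precision level $n$, the confirmed candidate of least index that has survived every confirmation test so far, revising its choice whenever the current candidate is later refuted by the experiment. This is an identification-in-the-limit strategy: $U$ never needs to know which $P_i$ is correct, only which current candidate the physics has not yet contradicted.

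The hard part will be the key lemma that makes the selection rule succeed precisely on the measurable masses, namely that experimental confirmation cannot be fooled in the limit. Concretely, I would show, from the collision dynamics underlying the CME analysed in the earlier sections, that for the true value $\mu$ no confirmation test is ever failed, whereas for any impostor $\nu \neq \mu$ there is a collision experiment, performable in finite time, whose outcome refutes $\nu$; hence any wrong candidate is eventually and permanently discarded while the true value is never discarded. Combined with the fact that some $P_i$ converges to $\mu$ whenever $\mu$ is measurable, this forces $U$'s output to stabilise on approximations converging to $\mu$, so $U$ measures every measurable mass. Establishing the quantitative form of this refutation property --- in particular that impostors close to $\mu$ are refuted within the (possibly exponential) time that $U$ allots through its dovetailing schedule, so that confirmation and convergence proceed at compatible rates --- is where the real work lies, and where the timing analysis behind the complexity theorems is reused.
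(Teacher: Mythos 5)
Your plan is a genuinely different route from the paper's, but as it stands it has a gap at exactly the point you defer to ``the real work.'' The paper does not enumerate procedures at all: its Corollary~\ref{bisectionuniversal} is an immediate consequence of Proposition~\ref{nonmeasurability}, which characterises the measurable masses intrinsically --- $\mu$ is measurable by \emph{some} program iff the run-length sequence $u_k$ of its binary expansion (pattern~(\ref{pattern})) is bounded by a computable function, iff $\mu$ is measurable by the bisection method with a suitable computable schedule. So the bisection method itself is the universal procedure, and no selection among candidates is needed. Note also that universality here (Definition~\ref{univ55}) fixes the machine but lets the computable schedule $T$ depend on $\mu$; your construction would still need to produce such a $T$ for each measurable $\mu$, and the only apparent way to do that is via the same computable bound on the $u_k$ --- at which point the enumeration machinery buys nothing.

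The concrete obstacles in your construction are these. First, a CME query is not a classical oracle call: it physically occupies the single apparatus for time at least $K/|m-\mu|$, which is finite for non-dyadic $\mu$ but unbounded and not known in advance. Your fair time-sharing schedule (``bounded attention to each of $P_1,\dots,P_s$'') therefore cannot simply ``relay the queries issued by each simulated $P_i$ and feed back the answers'': either $U$ waits out each relayed query, destroying the time-sharing, or it imposes time-outs, in which case the simulated $P_i$ does not see the answers it would see when run on its own (and Definition~\ref{measurable} requires measuring \emph{without} timing out on any query), so the premise ``some $P_i$ converges to $\mu$'' no longer transfers to its simulation inside $U$. Second, your confirmation step can only ever refute impostors, never confirm a candidate (Proposition~\ref{massundecidability}), and the refutation time for an impostor $\nu$ is of order $K/|\nu-\mu|$, which is unbounded over the stream of candidates; showing that every wrong candidate is refuted within the time your dovetailing allots, and that the resulting output stabilises within a \emph{computable} time bound $T(n)$, is precisely the unproved crux --- and for the non-measurable masses of Theorem~\ref{nonmeasurable} no such computable bound exists, so the argument must somewhere invoke the computable bound on the $u_k$ that the paper isolates as the key lemma. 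I would recommend proving that characterisation directly and reading off universality of bisection from it, as the paper does.
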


This is the third in our series of basic papers on physical oracles: the reader is recommended the first \cite{beggsetal:08c} for background and essential technical material (e.g., non-uniform complexity; time constructible functions etc.). In \cite{beggsetal:08d,beggsetal:09a} we argue that algorithms with physical oracles occur naturally in Physics. In \cite{beggsetal:09c} we have begun to study relativisations of the $P=NP$ problem to physical oracles.

\section{Computation and physical systems}\label{Computation_and_physical_systems}
We summarise five methodological principles for determining the computational power of physical systems and introduce the new principle determining the power of experimental procedures.  

\subsection{Methodological principles for experimental computation} \label{Methodological _principles_experimental}

To explore what may be computed by experimenting with physical systems we must theorise from first principles that are independent of theory of algorithms. For conceptual clarity, and mathematical precision and detail, we proposed, in \cite{beggstucker:06, beggstucker:07a}, the following four principles and stages for an investigation of any class of experimental computations:

\medskip\noindent
{\bf Principle 1. Defining a physical subtheory:} \textit{Define precisely a subtheory $T$ of a physical theory and examine experimental computation by the
  systems that are valid models of the subtheory $T$.}

\medskip\noindent
{\bf Principle 2. Classifying computers in a physical theory:} \textit{Find systems that are models of $T$ that can through experimental computation implement
  specific algorithms, calculators, computers, universal computers and hyper-computers.}

\medskip\noindent
{\bf Principle 3. Mapping the border between computer and hyper-computer in physical theory:} \textit{Analyse what properties of the subtheory $T$ are the source of computable and non-computable behaviour and seek necessary and sufficient conditions for the systems to implement precisely the algorithmically computable functions.}

\medskip\noindent
{\bf Principle 4. Reviewing and refining the physical theory:} \textit{Determine the physical relevance of the systems of interest by reviewing the truth or valid scope of the subtheory.  Criticism of the system might require strengthening the subtheory $T$ in different ways, leading to a portfolio of theories and examples.}

\medskip\noindent
Our methodology requires a careful formulation of a physical theory $T$, which we have discussed at length elsewhere \cite{beggstucker:06,beggstucker:07a}. From the theory $T$ are derived the central concepts of \textit{experimental procedures} and \textit{equipment}. We need to control $T$ to lay bare all the concepts and technicalities to be found in examples and to classify their computational behaviour. The theory $T$ is everywhere and so we are actually studying $T$-{\it computability} and $T$-{\it computational complexity}. Our approach has been applied in a new discussion of the physical basis of the Church-Turing Thesis in Ziegler \cite{ziegler:08}. 

\subsection{Methodological principles for combining experiments and algorithms} \label{Combining}

Next, we extend our methodology to consider the {\it interaction} between experiments and algorithms. 

First, consider using an experiment as a component to boost the performance of an algorithm or class of algorithms. In this case, computations involve
some form of \textit{protocol} for exchanging data between physical system and algorithm. A simple general way to do this is to choose an algorithmic model and incorporate the experiment as an oracle. There are many algorithmic models but the advantage of choosing Turing machines is their rich theory of computational complexity.

Suppose we wish to study the complexity of computations by \textit{Turing machines with experimental oracles}.  Given an input string $w$ over the alphabet of the Turing machine, in the course of a finite computation, the machine will generate and receive a finite sequence of oracle queries and answers. Specifically, as the $i$-th question to the oracle, the machine generates a string that is converted into a rational number $z_{i}$ and used to set an input parameter, a physical quantity, $p_{i}$ to the equipment.  The experiment is performed and, after some delay, returns as output a rational number measurement, or qualitative observation, which is converted into a string or state for the Turing machine to process. The Turing machine may pause while waiting for the oracle, which may take quite some time. This time must be measured by a clocks belonging to the Turing machine and the protocol. In \cite{beggsetal:08c}, we introduced the fifth principle:
\\
\\
{\bf Principle 5. Combining experiments and algorithms:} \textit{ Use a physical system as an oracle in a model of algorithmic computation, such as Turing machines.  Determine whether the subtheory $T$, the experimental computation, and the protocol extends the power and efficiency of the algorithmic model.}
\\
\\
Now consider experiments and the way they measure physical quantities.  At the heart of our conception of an experiment is an \textit{experimental procedure} that is applied to \textit{equipment}. As we proposed in the Introduction, an experimental procedure is sequence of instructions that act on and observe a physical system, and are to be performed step by step. More precisely an experimental procedure that is a text consisting of instructions, commands and rules derived from the physical theory $T$. The idea that an experiment is reproducible and subject to debate requires precise specifications and rigorous reasoning, all of which depends upon $T$. The analogy between experimental procedures and algorithmic procedures and programs was noticed in our earliest investigations --- a language for Newtonian experimental procedures was described in \cite{beggstucker:08a}.  

Next, we imagine a human experimenter or technician performing an experimental procedure, reading the text, following instructions, commands and rules defined by a physical theory $T$. The image can be compared with Turing's analysis of a human computing with pencil and paper, following instructions, commands and rules, which is the fundamental intuition behind the Turing machine. If the $T$-instructions are coded as strings then the Turing machine can serve an abstract model of the technician following the procedure. Of course, today, many experiments are fully computer controlled. Here we introduce:
\\
\\
{\bf Principle 6. Algorithms controlling experiments:} \textit{ Use a model of algorithmic computation, such as Turing machines, to control a physical system.  Determine whether the subtheory $T$, the experimental procedure and equipment, and the protocol extends or limits the accuracy and efficiency of the physical experiment to make measurements.}
\\
\\ 
All four components have effects. There are many variations when modelling even simple equipment, possibly requiring a choice of assumptions in the theory $T$. Recently, we remodelled the equipment of the SME in \cite{beggsetal:08c} and obtained dramatically different results on rounding a pointed wedge. In addition, the models of measurement introduce time in a natural quantitative way. The role of time is somewhat neglected in formal theories of measurement after Hempel \cite{hempel:52}; see \cite{suppesetal:09}.

Our experiments are idealised. They are designed to uncover fundamental ideas and properties of less perfect systems. Often, our results are intended to be a theoretical best case, where introducing more realism would only serve to make the results worse. Like the Turing machine, the experiments capture the essential features of examples, and map the limit of physical reality. Indeed, the ontology of our {\em gedankenexperimente} means that there is no fundamental difference between the ideal experiments defined by physical theory and the Turing machine.

\section{The collider machine experiment}\label{sec:collider}

In this and the next sections, we describe an experiment that uses elastic collisions to measure the unknown (inertial) mass of a particle to arbitrary accuracy. 

\subsection{Theory}

The elastic collision between two particles on a line is dictated by two basic laws of Physics: the conservation of {\em linear momentum} and the conservation of {\em kinetic energy}, These laws are underpinned by the homogeneity of Newtonian space and time, respectively, and both can be derived from the three laws of Newtonian dynamics, which we make take to be the underlying physical theory $T$ (cf. \cite{hensuptar:59}).

\subsection{Experimental equipment and its behaviour}
In the one dimensional collision, the center of mass of the two particles are in the same line of motion. Let $m$ and $\mu$ be the masses of the two particles. We will assume that the particle of ``unknown'' mass $\mu$ is always at rest before the collision, and that the test particle of mass $m$ is projected along the line towards the particle of unknown mass $\mu$ with speed $u = 1.0 \ (\pm \ \varepsilon) \ ms^{-1}$, e.g. with $0 \leq \varepsilon \leq 0.1$\footnote{This error margin in the initial speed of the test particle of mass $m$ means that precision in speed does not matter for this experiment.}. After the collision the particle of mass $m$ acquires the speed $v_m$ and the particle of mass $\mu$ is projected forward with speed $v_{\mu}$.

\begin{figure}[ht]

\setlength{\unitlength}{2000sp}
\begingroup\makeatletter\ifx\SetFigFont\undefined
\gdef\SetFigFont#1#2#3#4#5{\reset@font\fontsize{#1}{#2pt}
\fontfamily{#3}\fontseries{#4}\fontshape{#5}\selectfont}
\fi\endgroup

\begin{picture}(6000,6000)(0,0)

\thicklines

\put(2000,4000){\line(0,1){500}}
\put(9000,4000){\line(0,1){500}}

\put(1000,4000){\line(1,0){9000}}

\put(5050,2600){\vector(-4,0){3000}}
\put(6000,2600){\vector(4,0){2900}}

\put(2000,1000){\line(0,1){500}}
\put(9000,1000){\line(0,1){500}}

\put(1000,1000){\line(1,0){9000}}

\put(5500,3500){\line(0,1){500}}
\put(5500,500){\line(0,1){500}}

\put(1000,1000){\line(1,0){9000}}

\thinlines

\put(5500,4250){\circle*{500}}
\put(3600,4250){\circle{500}}
\put(3600,4250){\vector(1,0){750}}

\put(6000,1250){\circle*{500}}
\put(3000,1250){\circle{500}}
\put(3000,1250){\vector(-1,0){750}}
\put(6000,1250){\vector(1,0){750}}

\put(5150,2500){\makebox(0,0)[l]{\smash{{\SetFigFont{10}{14.4}{\rmdefault}{\mddefault}{\updefault}{2 m}}}}}

\put(3500,3100){\makebox(0,0)[l]{\smash{{\SetFigFont{10}{14.4}{\rmdefault}{\mddefault}{\updefault}{{\sc before the collision}}}}}}
\put(3700,100){\makebox(0,0)[l]{\smash{{\SetFigFont{10}{14.4}{\rmdefault}{\mddefault}{\updefault}{{\sc after the collision}}}}}}

\put(5500,4600){\makebox(0,0)[c]{\smash{{\SetFigFont{10}{14.4}{\rmdefault}{\mddefault}{\updefault}{{\sc \scriptsize unknown mass}}}}}}
\put(3600,4600){\makebox(0,0)[c]{\smash{{\SetFigFont{10}{14.4}{\rmdefault}{\mddefault}{\updefault}{{\sc \scriptsize test mass}}}}}}
\put(3600,4900){\makebox(0,0)[c]{\smash{{\SetFigFont{10}{14.4}{\rmdefault}{\mddefault}{\updefault}{$\vec{u}$}}}}}
\put(3700,5200){\makebox(0,0)[c]{\smash{{\SetFigFont{10}{14.4}{\rmdefault}{\mddefault}{\updefault}{{\scriptsize $1 \ m s^{-1}$}}}}}}

\put(6000,1600){\makebox(0,0)[c]{\smash{{\SetFigFont{10}{14.4}{\rmdefault}{\mddefault}{\updefault}{{\sc \scriptsize unknown mass}}}}}}
\put(3000,1600){\makebox(0,0)[c]{\smash{{\SetFigFont{10}{14.4}{\rmdefault}{\mddefault}{\updefault}{{\sc \scriptsize test mass}}}}}}
\put(3000,1900){\makebox(0,0)[c]{\smash{{\SetFigFont{10}{14.4}{\rmdefault}{\mddefault}{\updefault}{$\vec{v_m}$}}}}}
\put(6000,1900){\makebox(0,0)[c]{\smash{{\SetFigFont{10}{14.4}{\rmdefault}{\mddefault}{\updefault}{$\vec{v_{\mu}}$}}}}}

\put(8700,4900){\makebox(0,0)[l]{\smash{{\SetFigFont{10}{14.4}{\rmdefault}{\mddefault}{\updefault}{$P^+$}}}}}
\put(1700,4900){\makebox(0,0)[l]{\smash{{\SetFigFont{10}{14.4}{\rmdefault}{\mddefault}{\updefault}{$P^-$}}}}}

\put(8700,1900){\makebox(0,0)[l]{\smash{{\SetFigFont{10}{14.4}{\rmdefault}{\mddefault}{\updefault}{$P^+$}}}}}
\put(1700,1900){\makebox(0,0)[l]{\smash{{\SetFigFont{10}{14.4}{\rmdefault}{\mddefault}{\updefault}{$P^-$}}}}}

\put(5600,3600){\makebox(0,0)[l]{\smash{{\SetFigFont{10}{14.4}{\rmdefault}{\mddefault}{\updefault}{$0$}}}}}
\put(5600,600){\makebox(0,0)[l]{\smash{{\SetFigFont{10}{14.4}{\rmdefault}{\mddefault}{\updefault}{$0$}}}}}

\end{picture}

\caption{{\sc collider machine} experiment.}\label{variation0}

\end{figure}

Since the conservation of energy reduces to the conservation of kinetic energy in the case of elastic collisions in the horizontal plane, the collision is described by the equations: 
\begin{equation}\label{momentum}
m u = m v_m + \mu v_{\mu},
\end{equation}
\begin{equation}\label{kinetic}
\frac{1}{2} m u^2 = \frac{1}{2} m v_m^2 + \frac{1}{2} \mu v_{\mu}^2,
\end{equation}
\noindent that can be solved for $v_m$ and $v_{\mu}$: 
\begin{equation}\label{vm}
v_m = \frac{m - \mu}{m + \mu} \ u,
\end{equation}
\begin{equation}\label{vmu}
v_{\mu} = \frac{2m}{m + \mu} \ u.
\end{equation}

From these formulae we see that after a collision: 

\noindent (a) if $m < \mu$, then the test particle move backwards; 

\noindent (b) if $m > \mu$, then the test particle will move forward; and 

\noindent (c) if $m = \mu$, then the test particle of mass $m$ comes to rest and the particle of unknown mass $\mu$ is projected forward with the previous value of the speed of the test particle.

An experiment can be designed to measure the unknown mass $\mu$, using test particles of known mass $m$ projected at approximately the same speed $u$; the procedure is given in the next section.

We establish the convention that the particle of unknown mass is placed at the origin of coordinates and points $P^- \equiv -1 \ m$ and $P^+ \equiv +1 \ m$ are the {\em flags} of the experimenter's observations: when the test particle is seen crossing the points $P^-$ or $P^+$ the experiment terminates. If the test mass crosses the flag $P^-$ then we have $m<\mu$ (as depicted in Figure Fig.\ 2), and if it crosses the flag $P^+$, we have $m>\mu$.

Consider some tolerances in the design of this equipment. Among properties that are largely irrelevant, or where errors can be tolerated, are: the (finite) distance between the two flags; the precision of the placement of the flags; the error in placing the particle of the unknown mass at the origin; the initial speed of the test particle (let us say approximately $1 \ m s^{-1}$); and the error in the speed of the unknown mass. Note that the observed velocities of the particles after the collision, after crossing one or both the flags, are irrelevant.

However, properties and quantities that are relevant include: the one dimensional character; the masses of the unknown particles are in the range $(0,1)$; the particle of unknown mass $\mu$ is at rest; and that the collisions are elastic.

\subsection{Timing the experiment}

Looking closely at the experiment, we find an experimental barrier: the time for the test particle crossing the distance of $1 \ m$ after the collision is given by 
\begin{equation}\label{time}
t_m \,=\, \frac{1}{u} \left| \frac{m + \mu}{m - \mu} \right|\,=\,
 \frac{1}{u} \left|1+ \frac{2\, \mu}{m - \mu} \right|
\ ,
\end{equation}
so the time taken to complete the experiment is of the order
\begin{equation}\label{timesim}
t_m\, \propto\, \left| \frac{1}{m - \mu} \right|\ .
\end{equation}
\noindent For the values we will take for the masses and initial speed, the time is of the order of 
\begin{equation}\label{ctimesim}
\frac{A}{|m - \mu|} \ \leq \ t_{exp} \ \leq \ \frac{B}{|m - \mu|} \ ,
\end{equation}
for some constants $A$ and $B$.

Suppose we wanted to measure the unknown mass with accuracy $|m - \mu| \le 2^{-n}$. Then, substituting into Inequality \ref{ctimesim}, we need exponential time:
\begin{equation}\label{cdddvsducv}
A.2^{n} \ \leq \ t_{exp}
\end{equation}
Thus, the time taken goes to infinity as the test particle's mass $m$ approaches the unknown mass $\mu$. If we have $m=\mu$ then we will wait forever for the result. 

\section{Experimental procedures and protocols}\label{protocoldiscuss}

To find the unknown mass, which we assume is known to lie in the interval $(0,1)$, we need an experimental procedure. The experimental procedure we use is based on a bisection method and, ultimately, must be represented by a Turing machine.  For the experimental procedure to operate the equipment via oracle queries, we must specify new mechanisms and a protocol. 

\subsection{Oracle queries and setting initial conditions}

The Turing machine is connected to the collider experiment CME in the same way as it would be connected to an oracle: we replace the query state with a {\em shooting state} ($q_s$), the ``yes'' state with a {\em lesser state} ($q_l$), and the ``no'' state with a {\em greater state} ($q_g$). The resulting computational device is called the {\em (analogue-digital) collider machine experiment}.

In order to carry out an experiment with the equipment, the machine will write a word $z$ in the query tape and enter the shooting state. The word $z$ codes for a dyadic rational mass $m$ of the test particle. In the \textit{shooting state} the machine prepares and fires a test particle of mass $m$
as detailed above. The experiment continues until the test particle crosses one of the flags $P^\pm$, and then returns to a state $q_l$, for $m<\mu$, or a state $q_g$, for $m>\mu$, of the Turing machine.

Technically, this word $z$ will either be ``$1$'', or a binary word beginning with $0$. We will use $z$ ambiguously to denote both a word $z_1 \ldots z_n \in \{1\} \cup \{0s: s \in \{0,1\}^*\}$ and the corresponding dyadic rational $\sum_{i=1}^n 2^{-i+1} z_i \in [0,1]$. In this case, we write $|z|$ to denote $n$, i.e., the size of $z_1 \ldots z_n$. 
The resulting computational device is called the {\em analogue-digital collider machine}.

Consider the precision of the experiment. When measuring the output state the situation is simple: either the test particle of mass $m$ crosses $P^-$ or it crosses $P^+$ (or,  after time some timeout, no test particle is detected). Errors in observation do not arise. However, there are different postulates for the precision of the experiment, in order of decreasing strength:

\begin{defin}\label{CME}
The CME is error-free if the mass of test particle can be set exactly to any given dyadic rational number. The CME is error-prone with arbitrary precision if the mass of test particle can be set to within a non-zero, but arbitrarily small, dyadic precision. The CME is error-prone with fixed precision if there is a value $\varepsilon > 0$ such that the mass of test particle can be set only to within a given precision $\varepsilon$.
\end{defin}

\subsection{The bisection method: basic ideas and constraints}

The Turing machine can choose and fire the test mass for the CME. For simplicity, we shall assume that the masses are set exactly, without error, according to the dyadic rational coded on the query tape; our account applies to the error-prone cases mutato nomine. 

The bisection method  for the CME is roughly this.  We start by two dyadic rational test masses $m_1 \ (= \ 0 \ Kg)$\footnote{Although this has no physical meaning in the setting of Classical Mechanics, this fact is irrelevant as will soon become apparent.} and $m_2 \ (= \ 1 \ Kg)$ such that that, after the collision, the test particle is projected backward and forward, respectively. We know that the unknown mass $\mu$ is in the interval $(m_1,m_2)$. Then we try with the mass $m = \frac{m_1 + m_2}{2}$. If $m$ is reflected, then we know that $m < \mu < m_2$ and we make the assignment $m_1 := m$, else if $m$ is projected forward, then we know that $m_1 < \mu < m$ and we make the assignment $m_2 := m$. In principle, we will get {\em a binary digit of the unknown mass per collision}. 

However there are complications. There are potential problems with in the case where the unknown mass is dyadic and there is the matter of timing. As the method proceeds, we will have dyadic fractions which are described by longer and longer words. It is reasonable that setting up the apparatus for longer words would require more time (e.g., the increased time just to read the word). There are physical reasons inherent in the CME for just why some experiments take longer times than others. 

%The outcome of the experiment is determined by which flag the test mass crosses. However the speed of the test mass is not a constant, as illustrated in Fig.\ 2.
%\\
%\begin{figure}[ht]\label{figurespeedcme}

%\setlength{\unitlength}{2000sp}
%\begingroup\makeatletter\ifx\SetFigFont\undefined
%\gdef\SetFigFont#1#2#3#4#5{\reset@font\fontsize{#1}{#2pt}
%\fontfamily{#3}\fontseries{#4}\fontshape{#5}\selectfont}
%\fi\endgroup

%\begin{picture}(6000,5000)(0,0)

%\thicklines

%\put(3000,1025){\line(0,1){4000}}

%\put(2125,3000){\line(1,0){8000}}

%\thinlines

%\put(3000,1125){\line(1,0){7000}}

%\put(3000,5000){\line(1,0){7000}}

%\qbezier(3000,5000)(6500,5000)(7000,3000)
%\qbezier(7000,3000)(7500,1000)(10000,1150)

%\put(3950,5050){\makebox(0,0)[rt]{\smash{{\SetFigFont{10}{14.4}{\rmdefault}{\mddefault}{\updefault}{velocity}}}}}
%\put(9500,2600){\makebox(0,0)[l]{\smash{{\SetFigFont{10}{14.4}{\rmdefault}{\mddefault}{\updefault}{$m$}}}}}
%\put(6500,3050){\makebox(0,0)[l]{\smash{{\SetFigFont{10}{14.4}{\rmdefault}{\mddefault}{\updefault}{$\mu$}}}}}
%\put(2600,4900){\makebox(0,0)[l]{\smash{{\SetFigFont{10}{14.4}{\rmdefault}{\mddefault}{\updefault}{$u$}}}}}
%\put(2350,900){\makebox(0,0)[l]{\smash{{\SetFigFont{10}{14.4}{\rmdefault}{\mddefault}{\updefault}{$-u$}}}}}

%\end{picture}

%\vskip-0.2in

%\caption{Velocity {\em versus} mass $m$ for the test particle in the CMEt.}\label{variation2}

%\end{figure}

As the Turing machine does not know the value of $\mu$, it has no way of determining how long an experiment will take. So the questions arise: How does the programmer of the Turing machine deal with this problem? Shall the Turing machine wait indefinitely, or shall the programmer employ a timer or schedule and abort the experiment after a certain time? If we have a ``time out"  in an experiment, does the Turing machine program halt, or does it try to ``deal'' with the time out? 

\subsection{Time schedules and the collider machine protocol}\label{cmprotocol}

For the CME there are definite physical restrictions on choosing its protocol: There are constants $K,N>0$ so that 
for a  query $z$ of length $|z|$, giving a dyadic rational $z$ that determines the test mass $m$, the time taken to return a result is
\begin{center}
$K/|\mu-z|\pm N$.
\end{center} 
The machine returns one result if $\mu>z$, and another if $\mu<z$. If $\mu=z$ we get no result, i.e.\ we wait an infinite amount of time, unless the experiment is timed out by a process in the Turing machine. Such a time limit would mean that
no result would be given for $z$ sufficiently close to $\mu$, not just the case where $z=\mu$. We separate two cases: Suppose

\quad{(1)}\quad  \textit{There is no time limit on the oracle consultation}. 
After making a query $z$, the Turing machine will wait until it receives an interrupt from the collider machine with the result, which is either $\mu>z$ or $\mu<z$. 
At this time calculation will resume. It is important to note that, in this case, the Turing machine has no idea of the elapsed time and that the Turing machine may wait indefinitely. 

\quad{(2)}\quad \textit{There is a time limit on the oracle consultation}.  We assume the time limit has the form a function $T(|z|)$, where $z$ is the query. 
We use this as a timer in the following way: When the query $z$ is made, the Turing machine starts a timer which counts time
$T(|z|)$, and then the Turing machine looks for an answer from the collider machine, which may or may not have reached a decision by that time. The possible results are $\mu>z$, $\mu<z$ or \textit{out of time}. Note that the Turing machine
must wait for the whole time $T(|z|)$. Also note that if $z=\frac12$ for example, the Turing machine can increase the waiting time just by adding trailing $0$s to $z$, a padding that does not affect its numerical value, but affects its word length $|z|$\footnote{Padding buys time, but a polynomial protocol could not gain sufficient time for the CME by padding: to perform the experiment it would need exponential size padding, which would require exponential time.}. 

%...........................
%Consider a polynomial protocol $n^k$. For a query $z$ of size $n$, let
%$f(n)$ be the size of the padded query $z0^s$, where $f$ has to be a
%computable function. The resource of time is, in this case, $f(n)^k$ and it
%will be enough for the collider machine experiment, in the case of masses
%with regular distribution of the bit $1$, if the following relation holds:
%\\
%\begin{eqnarray*}
%A \ 2^{k' n} \ = \ f(n)^k \ ,
%\end{eqnarray*}
%\\
%from where we deduce
%\\
% \begin{eqnarray*}
%\log_2 \ A \ + \ k' n \ = \ k \ \log_2 \ f(n) \ ,
%\end{eqnarray*}
%\\
%that is
% \\
% \begin{eqnarray*}
%f(n) \ = \ B \ 2^{\frac{k'}{k} \ n} \ ,
%\end{eqnarray*}
%\\
%where $B = log_2 \ A / k$. This means that we need an exponential padding to
%work with a polynomial protocol in the collider machine experiment.
%..............................

It is reasonable to assume that the reason for increasing the word length of the query is to find the mass $\mu$ more accurately. But the time taken to find $\mu$ to more places increases at least exponentially in the number of places. If we do not allow for an increase in time which is at least exponential in the word length, we have no hope of finding $\mu$ to that length. 

It is important to note that there are two possible reactions by the Turing machine to an experiment having timed out, and both of these will be considered later in the paper: 

(1) The computation is aborted. This will be used in the definition of measurability later.

(2) The computation is continued, with a result of ``time out'' being returned by the physical oracle. This will be used in the fixed precision results later.

\subsection{The bisection method: experimental procedure}
The function $T$ specifies a timer or schedule, i.e., in each experiment, in order to read the $|m|$-bit of the mass $\mu$, $T(|m|)$ gives the number of time steps that the experimenter is prepared to wait before abandoning the experimental run.

\vspace{0.3cm}

\noindent $Bisection(T)$ ---  An experimental procedure to read the first $n$ bits of a unknown mass $\mu$ with schedule $T$\label{bisectionalg}
\begin{enumerate}
\item {\bf input} $n$ --- required precision coded by the number of places to the right of the left leading $0$;
\item $m_1 := 0$, $m_2 := 1$, $m := 0$ --- initial values with no physical significance; note $|m_1| = 0$, $|m_2| = 1$, and $|m| = 0$;
\item {\bf while} $|m| \leq n$ {\bf do}

\begin{enumerate}
\item $m := \frac{m_1 + m_2}{2}$;
\item place the particle of unknown mass $\mu \in [0, 1]$ at the origin; 
\item project test particle of mass $m$ to collide with particle of unknown mass;
\item {\bf if} test particle crosses the flag $P^-$ in time $T(|m|)$ {\bf then} $m_1 := m$; append $1$; --- it is known that $\mu \in (m,m_2)$;
\item {\bf if} test particle crosses the flag $P^+$ in time $T(|m|)$ {\bf then} $m_2 := m$; append $0$; --- it is known that $\mu \in (m_1,m)$;
\item {\bf if} no particle crosses the flags in time $T(|m|)$ {\bf then} return time out;
\end{enumerate}
\item {\bf end while};
\item  {\bf output} dyadic rational denoted by $m$.
\end{enumerate}

\vspace{0.3cm}

The bisection method is parameterised by the time schedule $T$ given to the experimenter to test for the crossing of one of the flags. It applies to each type of precision. There is the essential constraint on $T$ which has fundamental consequences.

%\begin{propos}\label{cexponentiallowerbounds}
%At each stage of the bisection algorithm, the lower bounds on the time of a single experiment with the CME are exponential in the size of the mass of the test particle.
%\end{propos}

%\noindent {\em Proof}: We know that the time taken by a single experiment is given by Inequality \ref{ctimesim} at step $n$ with $|m| = n$. Thus $\mu$ has a pattern of the form $\mu = m \pm m' \times 2^{-n'-1}$, with $m' \in [0,1]$ and $n' > n$, and $t_{exp}$ has a pattern of the form $$t_{exp} \ \sim \ \frac{K}{|m - (m \pm m' \times 2^{-n'-1})|} \ ,$$ that is,\footnote{Let $f$ and $g$ be total maps with signature $\mathbb{N} \to \mathbb{N}$. We say that $f \in \Omega(g)$ if there exists a constant $k \in \mathbb{R}$ such that, for an infinite number of values of $n \in \mathbb{N}$, $f(n) > k g(n)$.} $$t_{exp} \ \sim \ \frac{K}{|\pm m' \times 2^{-n'-1}|} \ \in \Omega(2^n) \ .$$ \hfill $\Box$

\begin{propos}\label{cexponentiallowerbounds}
At the $n$th stage of the bisection algorithm, the lower bound on the time of a single experiment with the CME is exponential in $n$.
\end{propos}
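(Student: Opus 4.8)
The plan is to read the claim as a straightforward consequence of the physical timing law for the collider, specialised to the masses that the bisection method is forced to fire. The only input from the physics is Inequality \ref{ctimesim}, which tells us that firing a test mass $m$ at the unknown mass $\mu$ takes time at least
\[
t_{exp} \;\ge\; \frac{A}{|m-\mu|}.
\]
So a \emph{lower} bound on the run time follows at once from an \emph{upper} bound on the separation $|m-\mu|$: the nearer the test mass is driven to $\mu$, the longer the collision must take to resolve. The whole proof therefore reduces to showing that at the $n$th stage of $Bisection(T)$ the midpoint $m$ is necessarily within roughly $2^{-n}$ of $\mu$.

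First I would track the search interval. Initially $m_1=0$, $m_2=1$, so $\mu\in(m_1,m_2)$ and the interval has width $1$. Each pass of the {\bf while} loop computes the midpoint $m=(m_1+m_2)/2$ and replaces one endpoint by $m$ according to which flag $P^{\pm}$ is crossed; in every branch the retained half still contains $\mu$, and its width is exactly halved. Hence after $k$ passes the interval continues to contain $\mu$ and has width $2^{-k}$, so at the $n$th stage the current interval $(m_1,m_2)$ has width $m_2-m_1=2^{-(n-1)}$.

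Next I would bound the separation directly. Since $m$ is the midpoint of $(m_1,m_2)$ and $\mu$ lies in that interval, the distance from $m$ to $\mu$ is at most half the width, giving $|m-\mu|\le 2^{-n}$. Substituting into the timing bound yields
\[
t_{exp} \;\ge\; \frac{A}{|m-\mu|} \;\ge\; A\,2^{n},
\]
which is exponential in $n$, as claimed. The degenerate case $\mu=m$ (possible only when $\mu$ is dyadic) makes $t_{exp}$ infinite and so only strengthens the conclusion.

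The argument is short, and I expect the only point requiring care to be the bookkeeping between the stage index $n$ and the query word length $|m|$. The loop is controlled by $|m|\le n$ and appends a single bit per collision, so I would check that ``the $n$th stage'' really does correspond to an interval of width $2^{-(n-1)}$ under the word-length conventions fixed for the query alphabet (where, for instance, $m_1=0$ is assigned $|m_1|=0$). This indexing is the sole potential source of an off-by-one discrepancy, and in any case it does not affect the exponential order of the lower bound.
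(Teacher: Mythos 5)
Your proof is correct and follows the same route as the paper's: bound the separation $|m-\mu|\le 2^{-n}$ at the $n$th stage (the paper asserts this directly, you justify it via the halving of the search interval) and then substitute into the timing law of Inequality~\ref{ctimesim} to obtain $A\,2^{n}\le t_{exp}$. The extra bookkeeping you supply about interval widths and the off-by-one in the stage index is a harmless elaboration of the paper's one-line argument.
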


\noindent {\em Proof}: At the stage of the bisection algorithm where we try to determine the $n$th place
of $\mu$, we need to employ a test particle of mass $m$ where $|m - \mu| \le 2^{-n}$.
Then Equation \ref{cdddvsducv} gives $A.2^{n}  \leq  t_{exp} $.
 \hfill $\Box$

Thus, we have the following consequence:

\begin{propos}
The protocol that processes queries between a Turing machine and the collider takes time that is at least exponential in the size of the mass of the test particle specified by the queries.
\end{propos}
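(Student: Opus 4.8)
The plan is to obtain this statement as a direct corollary of Proposition~\ref{cexponentiallowerbounds}, by tracking how the word length $|z|$ of a query (the ``size of the mass'') relates to the stage number $n$ of the bisection method. Since the statement is about the queries actually generated during measurement, I would work inside $Bisection(T)$ rather than with arbitrary padded words, so that each query is a genuine approximant of $\mu$.

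First I would examine the word lengths produced by the bisection. Starting from $m_1=0$ and $m_2=1$, each iteration forms the midpoint $m=\frac{m_1+m_2}{2}$, replaces one endpoint by $m$, and appends a single binary digit to the dyadic rational being queried. Hence at the $n$th stage the query word $z$ coding the test mass $m$ has length $|z| = n + c$ for a fixed additive constant $c$ fixed by the encoding convention (the leading $0$ together with the value $\sum_{i=1}^{|z|} 2^{-i+1} z_i$); one checks directly, e.g., that $\tfrac12$ is coded by $01$ and $\tfrac14$ by $001$, so $c=1$. This gives the key linear relationship $|z| = \Theta(n)$. I would then invoke Proposition~\ref{cexponentiallowerbounds}, which states that at the $n$th stage a single experiment satisfies $t_{exp} \ge A\cdot 2^{n}$. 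Substituting $n = |z| - c$ yields $t_{exp} \ge (A/2^{c})\,2^{|z|}$, an exponential lower bound in $|z|$. Finally, since the protocol must block and wait for the collider to cross a flag before it can return $\mu>z$ or $\mu<z$ to the Turing machine, the time the protocol spends on such a query is bounded below by $t_{exp}$, and is therefore at least exponential in $|z|$, the size of the specified mass.

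The main obstacle is the bookkeeping in the word-length step: one must pin down that the query grows by exactly one significant bit per stage, so that $|z|$ is genuinely linear in $n$ rather than bounded (as it would be for a padded query, e.g.\ $z=\tfrac12$ extended by trailing $0$s, where $|z|$ grows but $|\mu - z|$ does not) or growing faster. I would therefore be explicit that the bound is asserted for the genuine approximating queries of the bisection, for which $|\mu - z|$ is of order $2^{-|z|}$; once this linear relationship is secured, the exponential bound transfers verbatim from Proposition~\ref{cexponentiallowerbounds}.
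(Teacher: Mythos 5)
Your proposal is correct and follows essentially the same route as the paper: the paper offers no explicit proof, simply presenting the statement as a consequence of Proposition \ref{cexponentiallowerbounds}, and your argument (each bisection stage adds one significant bit so $|z|=n+1$, the $n$th-stage experiment needs time at least $A\cdot 2^{n}$, and the protocol must wait for the experiment) is exactly the intended derivation. Your explicit caveat that the exponential bound is asserted for the genuine approximating queries with $|\mu-z|=O(2^{-|z|})$, and not for padded queries, is a useful clarification that the paper leaves implicit.
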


Finally, there are some observations of properties that are experimentally undecidable:

\begin{propos}\label{massundecidability}
That the test mass coincides with the given unknown mass cannot be established experimentally in finite time by the CME under any experimental procedure.
\end{propos}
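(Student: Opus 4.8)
My plan is to reduce the statement to a single physical fact and then run an indistinguishability argument. The fact is supplied by \ref{vm}: when $m=\mu$ the post-collision speed $v_m=\frac{m-\mu}{m+\mu}\,u$ vanishes, so the test particle is brought to rest and never reaches $P^-$ or $P^+$. Consequently the \emph{only} response the CME can ever return for a test mass exactly equal to $\mu$ is a time out; it can return the verdict $m<\mu$ or $m>\mu$ only when $m\neq\mu$. So any procedure that declares coincidence must do so on the evidence of a time out. The crux is then that a time out is inconclusive: by the timing law \ref{time}, equivalently the protocol bound $K/|\mu-z|\pm N$, the crossing time diverges as the test mass approaches $\mu$, so for every finite waiting time $T$ each test mass close enough to $\mu$ (within about $K/T$) also times out. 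A time out therefore cannot separate $m=\mu$ from $m$ merely close to $\mu$.

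Turning this into a contradiction, I would first note that a procedure halting in finite time performs only finitely many consultations, each with a finite time limit, since each consultation costs the Turing machine a positive number of steps. Let the firings use test masses $m_1,\dots,m_k$ with limits $T_1,\dots,T_k$, and suppose the procedure, with true unknown mass $\mu$, correctly outputs that the test mass coincides with $\mu$. For each firing I would describe its outcome as a function of a hypothetical unknown mass $x$: by \ref{vm} and \ref{time}, firing $m_i$ crosses $P^-$ when $x<m_i$ with crossing time within $T_i$, crosses $P^+$ symmetrically, and otherwise times out; hence each outcome is locally constant in $x$, jumping only at the finitely many thresholds $x=m_i$ and the two values where the crossing time equals $T_i$.

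I would then build a competitor. The crossing firings satisfy $|m_i-\mu|\ge K/(T_i+N)>0$, so they remain on the same side and within the same limit under a small perturbation of the unknown mass, while the confirming firing $m=\mu$ sits in the interior of its time-out band and keeps timing out throughout a two-sided neighbourhood of $\mu$. Thus there is an interval $(\mu-\eta,\mu+\eta)$ on which every firing reproduces its observed outcome. Choosing any $\mu'$ in this interval with $\mu'\neq\mu$ (whence $\mu'\neq m$, as $m=\mu$) gives an unknown mass generating an identical transcript; since the procedure is a deterministic function of that transcript it issues the same adaptive queries and the same final declaration of coincidence, which is now false because the true mass is $\mu'\neq m$. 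This contradiction proves the proposition.

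The step I expect to be the main obstacle is producing one $\mu'$ that preserves \emph{all} outcomes simultaneously, for two reasons: the tolerance $\pm N$ replaces each crossing/time-out threshold by a fuzzy band rather than a sharp value, and $\mu$ could in principle sit exactly on such a threshold of some firing, so that the transcript-preserving set shrinks to a one-sided neighbourhood. I would control the first by taking $\eta$ smaller than the minimum, over the finitely many firings, of the strictly positive slack between each observed crossing (or time-out) time and its limit $T_i$; the second I would handle by observing that the confirming firing $m=\mu$ imposes only the two-sided constraint $|\mu'-\mu|<\eta$, and that any other firing whose test mass equals $\mu$ behaves the same way, so the remaining one-sided constraints come only from genuinely borderline (zero-slack) crossings, on which a correct procedure cannot rely precisely because the $\pm N$ tolerance makes their outcome irreproducible. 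Once a consistent direction and magnitude for the perturbation are fixed, the rest is routine.
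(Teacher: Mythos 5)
Your proposal is correct, and it rests on the same physical fact as the paper's proof --- the divergence of the experiment time as $m\to\mu$ (Equation (\ref{cdddvsducv}), equivalently the protocol bound $K/|\mu-z|\pm N$) --- but it goes noticeably further. The paper's argument is two sentences: as $m\to\mu$ the waiting time tends to infinity, so if the masses coincide the experimenter waits forever and ``will never know.'' That only addresses the scenario in which the procedure waits for a flag crossing; it does not explicitly exclude the possibility that a procedure could \emph{infer} coincidence from the pattern of time-outs it observes. Your indistinguishability argument closes exactly that loophole: you observe that a time-out is the only possible response when $m=\mu$, that a finite-time procedure makes finitely many consultations with finite limits, and that every transcript so produced is also produced by an entire interval of unknown masses $(\mu-\eta,\mu+\eta)$, so a deterministic machine declaring coincidence would be wrong for some $\mu'\neq\mu$ in that interval. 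Your handling of the adaptive queries (same transcript implies same query sequence) and of the $\pm N$ fuzziness is appropriately cautious; the borderline-threshold issue you flag is real but, as you note, a correct procedure cannot depend on an outcome that the tolerance makes irreproducible. In short, what your route buys is an actual impossibility proof against arbitrary procedures, whereas the paper's proof is a plausibility sketch for the natural (bisection-style) procedure; what the paper's version buys is brevity and directness, at the cost of not engaging with the time-out-as-evidence case at all.
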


\noindent {\em Proof}: According to Equation \ref{cdddvsducv}, as $m \rightarrow \mu$ through the bisection method, the time the experimenter has to wait goes to infinity, $t_{exp} \rightarrow + \infty$. If the two masses coincide, then the experimenter will never know. \hfill $\Box$

\begin{propos}\label{dyadicmass}
To know if the unknown mass is a dyadic rational cannot be established experimentally in finite time by the CME under any experimental procedure.
\end{propos}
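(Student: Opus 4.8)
The plan is to argue by contradiction using an \emph{indistinguishability} (adversary) argument that exploits the finiteness of the information the CME can return in finite time. Suppose there were an experimental procedure $P$ driving the CME that, for every unknown mass $\mu \in (0,1)$, halts after finitely many steps and correctly reports whether $\mu$ is a dyadic rational. The key structural fact I would use is that a halting run performs only \emph{finitely many} collisions, and each collision returns nothing more than an order comparison between the chosen dyadic test mass and $\mu$: crossing $P^-$ signals $m_i<\mu$ (state $q_l$) and crossing $P^+$ signals $m_i>\mu$ (state $q_g$). By Proposition \ref{massundecidability} the remaining case $m_i=\mu$ never produces a crossing in finite time, so it can never be the reason a finite computation halts.

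First I would fix a dyadic $\mu_0$ and examine the run of $P$ on it. It fires finitely many test masses $m_1,\dots,m_k$, and the answered queries return only strict comparisons, so together they confine $\mu_0$ to an open interval $I=(a,b)$ with dyadic endpoints $a=\max\{m_i : m_i<\mu_0\}$ and $b=\min\{m_i : m_i>\mu_0\}$ (with the default bounds $0$ and $1$), where $a<\mu_0<b$. Since every nonempty open interval contains non-dyadic reals, I would select a non-dyadic $\mu_2\in I$. Because $\mu_2$ lies strictly between exactly the same test masses, each $m_i$ yields the \emph{same} comparison against $\mu_2$ as against $\mu_0$; as $P$ is adaptive but deterministic, an easy induction on the query index shows that the run of $P$ on $\mu_2$ issues the identical sequence of queries, receives the identical answers, and halts with the identical output ``dyadic'' --- contradicting that $\mu_2$ is not dyadic. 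This already settles the case in which no time limit is imposed on oracle consultations.

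The main obstacle is the timeout mechanism: when the protocol imposes a schedule $T(|z|)$ and reports ``time out'' for a collision that overruns its deadline, a query's answer is no longer a function of the sign of $m_i-\mu$ alone, so the crude interval argument breaks. I would repair this by upgrading the previous step to a local-constancy argument. By the timing law, the duration of a collision is $K/|\mu-m_i|\pm N$, which varies continuously with $\mu$ away from $m_i$; hence whether query $m_i$ meets its deadline $T(|m_i|)$ is locally constant in $\mu$ except on the boundary set where the collision time equals the deadline. Intersecting the finitely many neighbourhoods on which each query's sign \emph{and} timeout status are unchanged produces an open neighbourhood $U\ni\mu_0$ on which $P$ reproduces the same computation and output, and $U$ still contains non-dyadic masses, giving the contradiction as before. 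The one delicate point to make fully rigorous is the $\pm N$ slop in the timing law together with the borderline case in which the collision time equals $T(|m_i|)$ exactly; I would dispose of it by first perturbing $\mu_0$ among the dyadics so as to avoid the finitely many deadline-boundary values, rendering every timeout decision strict and therefore stable under sufficiently small perturbations of $\mu$.
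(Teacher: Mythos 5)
Your argument is sound in its essentials, but it takes a genuinely different route from the paper --- which in fact prints no proof of Proposition~\ref{dyadicmass} at all: the statement is placed immediately after Proposition~\ref{massundecidability} and is evidently meant as a corollary of the fact that the experiment time diverges as $m\to\mu$, so that equality of a dyadic test mass with the unknown mass can never be confirmed. That implicit argument only rules out the one obvious strategy (confirming $\mu=z$ for a particular dyadic $z$); your adversary/indistinguishability argument closes the actual logical gap by showing that \emph{no} halting run --- which can only ever return finitely many strict comparisons, possibly decorated with timeout flags --- separates dyadic from non-dyadic masses, because the set of masses consistent with a given finite transcript contains a nonempty open interval and hence masses of both kinds. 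This is the argument one would actually need, and it buys more: it also shows that non-dyadicity cannot be certified, and it is robust to arbitrary adaptive procedures rather than just the bisection method. One small repair is needed in your timeout case: owing to the $\pm N$ slop, the set of $\mu$ for which the timeout decision on a query $z$ is ambiguous is not a finite collection of ``deadline-boundary values'' but the pair of intervals $\{\mu : K/(T(|z|)+N)\le |\mu - z| \le K/(T(|z|)-N)\}$, each of positive length, so ``perturbing $\mu_0$ among the dyadics to avoid finitely many boundary values'' does not quite parse. The cleaner fix is to note that a correct decision procedure must succeed under every resolution of the $\pm N$ nondeterminism, fix one deterministic resolution, discard the measure-zero set of $\mu_0$ whose run hits an exact deadline tie, and then take $\mu_2$ in the (open, nonempty) intersection of the regions on which each query's sign and timeout outcome are locally constant.
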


\subsection{Computation using CME as an oracle}

Here are the definitions for deciding sets by Turing machines with the help of the CME (recall \cite{beggsetal:08c}).

\begin{defin}\label{langdetercollider}
Let $A \subseteq \Sigma^*$ be a set of words over $\Sigma$. We say that an error-free analogue-digital collider machine $M$ {\em decides} $A$ if there exists a time constructible schedule $T$ for a protocol to operate the oracle CME($\mu$) such that, for every input $w \in \Sigma^*$, $w$ is accepted if $w \in A$ and rejected when $w \notin A$. We say that $M$ {\em decides $A$ in polynomial time}, if $M$ decides $A$, and there is a polynomial $p$ such that, for every $w \in \Sigma^*$, the number of steps of the computation is bounded by $p(|w|)$.
\end{defin}

\begin{defin}\label{langprobcollider}
Let $A \subseteq \Sigma^*$ be a set of words over $\Sigma$. We say that an error-prone analogue-digital scatter machine $M$ {\em decides} $A$ if there exists a time constructible schedule $T$ for a protocol to operate the oracle CME($\mu$) and a number $\gamma < \frac{1}{2}$, such that the error probability of $M$ for any input $w$ is smaller than $\gamma$. We call {\em correct} to those computations which correctly accept or reject the input. We say that $M$ {\em decides $A$ in polynomial time}, if $M$ decides $A$, and there is a polynomial $p$ such that, for every input $w \in \Sigma^*$, the number of steps in every computation of $M$ on $w$ is bounded by $p(|w|)$.
\end{defin}

%We can define different classes of time complexity, e.g., if $\mathit{exp}$ is the class of functions of the form $\lambda n. \ 2^{kn}$, for some $k$, where $n$ is the size of the test mass expressed in binary notation (dyadic rational), then we can define $$DEXT^{\dagger} = \bigcup_{k = 1}^{+ \infty} \ \{A \subseteq \{0,1\}^*: A \ \mbox{is decided in time} \ \lambda n. \ 2^{kn}\},$$ where the dagger denotes that the time in question is physical time instead of computational time. In the same fashion the other computational classes can be defined.

\section{Preliminaries on non-uniform complexity}\label{sec:complexity}

In this paper $\Sigma$ denotes an {\em alphabet} and $\Sigma^*$ denotes the {\em set of finite words} over $\Sigma$. A {\em language} is a subset of $\Sigma^*$.  Almost always we will adopt the binary alphabet $\{0, 1\}$. 

The {\em pairing function} is the well known map $\langle -,- \rangle: \Sigma^* \times \Sigma^* \to \Sigma^*$, computable in linear time, that allows to encode two words in a single word over the same alphabet by duplicating bits and inserting a separation symbol ``$01$''. 

We recall the definition of {\em non-uniform complexity class}. By an {\em advice function} we mean any total map $f: \mathbb{N} \to \Sigma^*$. 

\begin{defin}\label{bar}
Let $\mathcal{B}$ be a class of sets and $\mathcal{F}$ be a class of advice functions. Then we define the new class $\mathcal{B}/\mathcal{F}$ as the class of sets $A$ such that there exists a set $B \in \mathcal{B}$ and an advice $f \in \mathcal{F}$ such that, for every word $x \in \Sigma^*$, $x \in A$ if, and only if, $\langle x, f(|x|) \rangle \in B$.
\end{defin}

Suppose we fix the class $\mathcal{B}$ to be the class $P$ of sets decidable by Turing machines in polynomial time. Then we still have one degree of freedom which is the class of advice functions $\mathcal{F}$ that makes $P/\mathcal{F}$. In this paper, we will work with subpolynomial advice functions, i.e., $\mathcal{F}$ is a class of functions with sizes bounded by {\em polynomials} and computable in polynomial time. Note that the advice functions are not, in general, computable but the associated bounds are computable; e.g., if the class is $poly$, then it means that any, possibly non-computable, advice function $f: \mathbb{N} \to \Sigma^*$, is bounded by a (computable) polynomial $p$ such that, for all $n \in \mathbb{N}$, $|f(n)| \leq p(n)$.

\subsection{Deterministic classes}\label{sec:deterclass}

Although the class $\mathcal{F}$ of functions is arbitrary it is useless to use functions with growth rate greater than exponential. Let $exp$ be the set of advice functions bounded in size by functions in the class $2^{O(n)}$. Then $P/exp$ contains all sets. Given this fact, we wonder if either $P/poly$ or $P/log$ (subclasses of $P/exp$) exhibit some interesting internal structure. Three main results should be recalled from \cite{balcetal:88} (Chapter 5):

\begin{propos}\label{expspace}
There exist sets in \textit{EXPSPACE} not in $P/poly$. 
\end{propos}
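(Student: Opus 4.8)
The plan is to prove the separation by diagonalisation against polynomial-size circuits, exploiting the standard characterisation (recalled from \cite{balcetal:88}) that a set lies in $P/poly$ if, and only if, it is decided by a family $\{C_n\}_{n \in \N}$ of Boolean circuits whose size is bounded by a fixed polynomial. The heart of the matter is a counting estimate: on $n$ inputs there are $2^{2^n}$ distinct Boolean functions, whereas the number of circuits of size at most $s$ is only $2^{O(s \log s)}$. Taking $s = 2^{n/2}$, the number of such circuits is at most $2^{O(n\, 2^{n/2})}$, which is strictly smaller than $2^{2^n}$ for all large $n$. Hence, for every large $n$ there exists at least one Boolean function on $n$ bits that is computed by no circuit of size $\le 2^{n/2}$.

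First I would make this hard function canonical so that it can be located effectively. For each $n$, let $f_n$ be the lexicographically least truth table of length $2^n$ that is not realised by any circuit of size $\le 2^{n/2}$; by the counting estimate $f_n$ is well defined for all large $n$ (and we set $f_n \equiv 0$ on the finitely many small lengths). Define the language $L$ by declaring, for each word $x$ of length $n$, that $x \in L$ precisely when the bit of $f_n$ indexed by $x$ equals $1$.

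Next I would verify the two required properties. For the lower bound: if $L$ were in $P/poly$ it would be decided by circuits of some polynomial size $n^k$; but by construction the length-$n$ slice of $L$ equals $f_n$, which requires circuits of size exceeding $2^{n/2} > n^k$ for all large $n$, a contradiction, so $L \notin P/poly$. For the upper bound I would show $L \in \textit{EXPSPACE}$ by brute force. On input $x$ of length $n$, the machine enumerates candidate truth tables $T$ of length $2^n$ in lexicographic order; for each $T$ it enumerates all circuits of size $\le 2^{n/2}$, evaluating each on all $2^n$ inputs to test whether it realises $T$; the first $T$ realised by no such circuit is $f_n$, and the machine outputs the bit of $f_n$ indexed by $x$. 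Storing one candidate truth table costs $2^n$ bits, while storing and evaluating one circuit costs $2^{O(n)}$ bits, so the whole search runs in space $2^{O(n)}$, that is, exponential space.

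The step I expect to be the main obstacle is getting the counting bound to bite with room to spare: one must choose the size threshold (here $2^{n/2}$) large enough that the slices genuinely defeat every polynomial, yet small enough that the number of circuits stays below $2^{2^n}$ so that a hard truth table is guaranteed to exist, and simultaneously small enough that the brute-force search fits in exponential space. Verifying that a single threshold reconciles all three demands --- superpolynomiality, existence, and the \textit{EXPSPACE} budget --- is the crux; the circuit enumeration and evaluation themselves are routine once the bookkeeping of sizes is fixed.
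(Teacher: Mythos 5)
Your proposal is correct and is essentially the argument the paper relies on: the paper gives no proof of its own, merely noting that the result follows ``by a non-trivial diagonalization of $P/poly$'' with a reference to Balc\'azar et al.\ (Chapter 5), and your counting argument --- more truth tables on $n$ bits than circuits of size $2^{n/2}$, taking the lexicographically least hard truth table at each length, and deciding the resulting language by brute-force search in space $2^{O(n)}$ (time being unrestricted in the paper's definition of \textit{EXPSPACE}) --- is precisely that standard diagonalization, using the circuit characterisation of $P/poly$ that the paper itself states.
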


Here \textit{EXPSPACE} utilises a Turing machine whose working tape, rather than being infinite, has length bounded by an exponential on the size of the input. There is no bound on time. This proposition above is proved by a non-trivial diagonalization of $P/poly$.

\begin{propos}\label{halting}
The {\em Halting Set} $H = \{0^n: \mbox{Turing machine with code $n$ halts on 0}\}$ is in $P/poly$.
\end{propos}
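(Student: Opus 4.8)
The plan is to exploit the fact, stressed in the discussion preceding Definition~\ref{bar}, that advice functions in $poly$ need not be computable: only their \emph{size} must be polynomially bounded. Since the Halting Set is presented in unary, for each length $n$ there is exactly one word of length $n$ that could possibly lie in $H$, namely $0^n$, and its membership is governed by the single bit ``does the Turing machine with code $n$ halt on input $0$?''. This observation is what makes a constant-size advice string suffice.

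Concretely, I would first define the advice function $f:\mathbb{N}\to\Sigma^*$ by setting $f(n)=1$ if the Turing machine with code $n$ halts on $0$, and $f(n)=0$ otherwise. This $f$ is not computable, but $|f(n)|=1$ for all $n$, so it is trivially bounded by a polynomial and hence $f\in poly$. Next I would define the auxiliary set $B=\{\langle x,b\rangle : x\in 0^* \text{ and } b=1\}$ and check that $B\in P$: given $\langle x,b\rangle$, one decodes $x$ and $b$ using the (linear-time) pairing function, scans $x$ to verify it contains no $1$, and accepts precisely when $x$ is all zeros and $b=1$. All of this runs in time linear in the input length, so $B\in P$.

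Finally I would verify the equivalence required by Definition~\ref{bar}, namely that $x\in H$ if and only if $\langle x,f(|x|)\rangle\in B$, by splitting into two cases. If $x=0^n$, then $x\in H$ iff the machine coded by $n$ halts on $0$ iff $f(n)=1$ iff $\langle x,f(n)\rangle\in B$ (the all-zeros condition on $x$ holding automatically). If instead $x$ contains a $1$, then $x\notin H$ by the very form of $H$, and simultaneously $\langle x,f(|x|)\rangle\notin B$ because $x\notin 0^*$. Since $B\in P$ and $f\in poly$, Definition~\ref{bar} gives $H\in P/poly$.

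There is no genuine technical obstacle in this argument; the verification is routine. The only conceptual point worth emphasising is that the non-uniformity of the advice is exactly what lets us smuggle the (non-computable) halting information into an otherwise polynomial-time decision, while the unary presentation keeps the amount of smuggled information down to a single bit per input length. This also highlights the contrast with Proposition~\ref{expspace}: $P/poly$ is powerful enough to contain a set as hard as the Halting Set, yet still fails to contain all of \textit{EXPSPACE}.
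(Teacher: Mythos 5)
Your proof is correct and is exactly the standard argument: a single non-computable advice bit per length, combined with the unary (tally) form of $H$, places the Halting Set in $P/poly$ (indeed in $P/1$). The paper itself gives no proof, merely recalling the result from Balc\'azar et al.\ (Chapter 5), and your construction is the one that reference uses, so there is nothing to reconcile.
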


%A set is said to be {\em sparse} if its census is bounded by a polynomial. We also need to recall the concept of {\em tally set}: a set is said to be tally if it is a language over an alphabet of a single letter (we take this alphabet to be $\{0\}$). Tally sets are sparse (but not vice-versa). For each tally set $T$, $\chi_T$ (see Definition \ref{characteristic}) is defined relative to a single letter alphabet, e.g., $\Sigma = \{0\}$. The halting set $H$ above is tally.

%The most common characterization of $P/poly$ is given by the following statement, where by $P(S)$ we denote the class of sets decidable by deterministic Turing machines having access to the {\em oracle} set (or language) $S$:

%
%\begin{propos}\label{barsparsetally}
%$P/poly \ = \ \bigcup_{S \ sparse} \ P(S) \ = \ \bigcup_{S \ tally} \ P(S)$
%\end{propos}

%
%In \cite{balcetal:93} the following statement is offered as exercise to the reader (Chapter 5, Exercise 9). 

%
%\begin{propos}\label{tallyadvice}
%In polynomial time, tally oracle Turing machines and advice Turing machines are equivalent. \footnote{This Proposition can be generalized to arbitrary classes of time bounds.}
%\end{propos}

We will also need to treat prefix non-uniform complexity classes. For these classes we may only use prefix functions, i.e., functions $f$ such that $f(n)$ is always a prefix of $f(n+1)$. The idea behind prefix non-uniform complexity classes is that the advice given for inputs of size $n$ may also be used to decide smaller inputs.

\begin{defin}\label{prefixclass}
Let $\mathcal{B}$ be a class of sets and $\mathcal{F}$ a class of advice functions. The prefix advice class $\mathcal{B/F*}$ is the class of sets $A$ for which some $B \in \mathcal{B}$ and some prefix function $f \in \mathcal{F}$ are such that, for every length $n$ and input $w$, with $|w| \le n$, $w\in A$ if, and only if, $\langle w, f(n) \rangle \in B$. 
\end{defin}

Non-uniform classes are, indeed, relevant, by their impact in characterization theorems. For example, the class $P/poly$ is the class of sets decidable by the families of polynomial size circuits. They have been used in the many theories of analogue computation and hybrid systems (namely, see \cite{siegelmann:99,bournezcosnard:96}).

\subsection{Probabilistic classes}\label{sec:probclass}

For the probabilistic complexity classes it is a matter of some controversy whether Definition \ref{bar} is the appropriate definition of a non-uniform probabilistic class (eg., of $BPP/\mathcal{F}$). Notice that by demanding that there is a set $B \in BPP$ and a function $f \in \mathcal{F}$ (in this order!) such that $w \in A$ if, and only if, $\langle w, f(|w|) \rangle \in B$, we are demanding a fixed probability $\frac{1}{2} + \varepsilon$, $0 < \varepsilon < \frac{1}{2}$  (fixed by the Turing machine chosen to witness that $B \in BPP$) {\em for any possible correct advice}, instead of the more intuitive idea that the error $\gamma = \frac{1}{2} - \varepsilon$ only has to be bounded after choosing the correct advice. This leads to the following definitions for the specific complexity class $BPP$ that we will be using throughout this paper:

\begin{defin}\label{barbarpoly}
$BPP//poly$ is the class of sets $A$ for which a probabilistic Turing machine $TM$ clocked in
polynomial time, a function $f \in poly$, and a constant $0 < \gamma< \frac{1}{2}$ exist such that $TM$ rejects $\langle w, f(|w|) \rangle$ with probability at most $\gamma$ if $w \in A$ and accepts $\langle w, f(|w|) \rangle$ with probability at most $\gamma$ if $w \notin A$.
\end{defin}

\begin{defin}\label{barbarlog}
$BPP//log*$ is the class of sets $A$ for which a probabilistic Turing machine $TM$ clocked in polynomial time, a prefix function $f \in log*$, and a constant $0 < \gamma< \frac{1}{2}$ exist such that, for every length $n$ and input $w$ with $|w| \leq n$, $TM$ rejects $\langle w, f(n) \rangle$ with probability at most $\gamma$ if $w \in A$ and accepts $\langle w, f(n) \rangle$ with probability at most $\gamma$ if $w \notin A$.
\end{defin}

It can be shown that $BPP//poly = BPP/poly$, but it is unknown whether $BPP//log* \subseteq BPP/log*$. After the work of \cite{balcher:98}, we can assume without loss of generality that, for $P/log*$ and $BPP//log*$, the length of any advice $f(n)$ is exactly $\lfloor a \log n + b \rfloor$, for some $a, b \in \mathbb{N}$ which depend on $f$. The proof also applies to BPP//log*.

\section{The {\sc collider} computes $P/log*$ and $BPP//log*$}\label{sec:collidercomplexity}

\subsection{Coding the advice function as the unknown mass value}\label{code}

The aim is to code advice functions as quantities. We take a binary coding $c(a)$ for words $a$ by first converting $a$ to a string of 0s and 1s using a binary code for every letter in its alphabet $\Sigma$ (e.g. Unicode or ASCII). 
For example, we might suppose that the binary form of $a$ was $00101$. To find $c(a)$, replace every $0$ in its binary form by $100$ and every $1$ by $010$. Thus our example becomes $100100010100010$. If $a$ is empty then so is $c(a)$.
Now we must decide on a coding for $\log\!*$ advice, where the reader is reminded that the advice
 $f(n)$ can be used to decide all words $w$ with $|w|\le n$. We can recursively define $\mu(f)$
 as the limit of $\mu(f)(n)$,
 using our coding $c$, starting with
 \[
 \mu(f)(0)\,=\,0\cdot c(f(0))
 \]
 and using the following cases:
 \[
  \mu(f)(n+1)\,=\,\left\{\begin{array}{ll}
 \mu(f)(n)\,c(s) & \mathrm{if}\ n+1\  \mathrm{not\ a\ power\ of\ 2\ and }\ f(n+1)=f(n) s \\
   \mu(f)(n)\,c(s)\,001 & \mathrm{if}\ n+1\  \mathrm{a\ power\ of\ 2\ and }\ f(n+1)=f(n) s
 \end{array}\right. 
\]
The net effect is to add a 001 at the end of the codes for $n=1,2,4,8\dots$. 

Now take $w$ with $2^{m-1} < |w|\le 2^{m}$. We read the binary expansion until we have 
counted a total of $m+1$ 001 triples. Now the extra 001s are deleted, and
we can reconstruct $f(2^m)$, which can be used as advice for $w$. But how many
 binary digits from $\mu(f)$ must we read in order to reconstruct $f(2^m)$? 
We start with $|c(f(n))|\le L\ \log_2(n)+K$ (for some constants
$K$ and $L$) from the definition of log length. We must read at most 
$L\, m+K+3\,m$ digits when we add in the separators, so the number of digits is logarithmic in 
$|w|$. 

Of course, not every element of $(0,1)$ occurs as a $\mu(f)$. The reader may note that no dyadic rational can occur, as they would have to end in $0^{\omega}$ or $1^{\omega}$, and the triples $000$ and $111$ do not occur in any $\mu(f)$.  We will need a stronger result about which numbers can't be of the form $\mu(f)$:

\begin{propos}\label{teo2}
Let $f: \mathbb{N} \to\ \Sigma^*$ be an advice function. Given a dyadic rational $k/2^{n} \in (0,1)$, for integer $k$, $|\mu(f)-k/2^{n}| > 1/2^{n+5}$, for all $f$.
\end{propos}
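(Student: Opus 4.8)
The plan is to show that any number of the form $\mu(f)$ must keep a ``safe distance'' from every dyadic rational, by exploiting the structural constraints that the coding $c$ forces on the binary expansion of $\mu(f)$. The key observation, already noted before the statement, is that $\mu(f)$ never contains the triples $000$ or $111$: every letter of the underlying word is expanded to one of $100$ or $010$, and the separators $001$ introduced at powers of $2$ also avoid runs of three equal bits. A dyadic rational $k/2^n$, on the other hand, has two binary expansions, one ending in $0^\omega$ and one ending in $1^\omega$; so near a dyadic rational the expansions are dominated by long runs of a constant bit. The strategy is to turn ``$\mu(f)$ has no run of three equal bits'' into a quantitative lower bound on $|\mu(f)-k/2^n|$.

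First I would fix the dyadic rational $k/2^n\in(0,1)$ and examine the bits of $\mu(f)$ in positions $n+1, n+2, n+3, \dots$, i.e.\ the ``tail'' beyond the first $n$ places. The point is that $k/2^n$ has an expansion that is identically $0$ from position $n+1$ onward (and an alternative expansion that is identically $1$ from some point on). If $\mu(f)$ agreed with $k/2^n$ to within $1/2^{n+5}$, then the bits of $\mu(f)$ in a short window just past position $n$ would be forced to be essentially all $0$ (or essentially all $1$, depending on which expansion and on the sign of the difference) — a run long enough to contain $000$ or $111$, contradicting the no-triple property. Concretely, I would argue the contrapositive: if $|\mu(f)-k/2^n|\le 1/2^{n+5}$, then the binary digits of $\mu(f)$ in positions roughly $n+1$ through $n+4$ must match those of one of the two expansions of $k/2^n$, and since those expansions are constant there, $\mu(f)$ acquires a forbidden run of three equal consecutive bits.

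The main technical step is the standard lemma relating closeness of two reals to agreement of their binary expansions: if $|x-y|\le 2^{-(n+5)}$ then $x$ and $y$ share a long common prefix, \emph{except} for the familiar carry/borrow subtlety near dyadic points, where the two numbers may straddle a dyadic boundary and have wildly different digit strings (one ending in many $0$s, the other in many $1$s). This is exactly why the bound is $1/2^{n+5}$ rather than something tighter: the slack of five extra places is there to guarantee that, in \emph{either} of the two expansions of the dyadic neighbour, and on \emph{either} side of it, a window free of forbidden triples simply cannot fit. I would handle the two expansions of $k/2^n$ and the two signs of the difference as a small finite case analysis, in each case producing an explicit string of at least three equal bits inside the expansion of $\mu(f)$.

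The hard part will be bookkeeping the carry behaviour cleanly: near a dyadic rational the ``which digits are forced'' argument is delicate precisely because a difference as small as $2^{-(n+5)}$ can still flip a long block of digits via a borrow. The safe way to organise this is not to reason digit-by-digit but to note that the open interval $(k/2^n - 2^{-(n+5)},\, k/2^n + 2^{-(n+5)})$ is covered by a bounded number of dyadic subintervals of length $2^{-(n+3)}$, and that every real in such a subinterval has one of its binary expansions beginning with a prefix of length $n+3$ that ends in a run of at least three identical bits (inherited from the constant tail of $k/2^n$). Since $\mu(f)$ has no expansion containing $000$ or $111$, it cannot lie in that interval, which is exactly the claim $|\mu(f)-k/2^n|>1/2^{n+5}$. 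I would close by remarking that the constant $5$ is not optimized and merely provides comfortable room for the carry analysis.
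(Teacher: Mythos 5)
Your overall strategy --- converting the forbidden-pattern structure of $\mu(f)$ into a quantitative separation from dyadic rationals --- is the right one, and it is what the paper's one-line proof gestures at. But the structural lemma you rely on is false as stated, and this breaks your quantitative step. It is not true that the binary expansion of $\mu(f)$ contains no occurrence of $000$ (no run of three equal bits) as a substring: the expansion is a concatenation of \emph{aligned} three-bit blocks, each equal to $100$, $010$ or $001$, but across a block boundary the block $100$ followed by the separator $001$ produces $100001$, a run of \emph{four} consecutive zeros. (Concretely, $f(0)=0$ with the separator appended at $n=1$ already gives $\mu(f)=0\!\cdot\!100001\ldots$.) Consequently your key step --- ``if $|\mu(f)-k/2^{n}|\le 2^{-(n+5)}$ then positions roughly $n+1$ through $n+4$ are forced to be equal, which yields a forbidden triple'' --- produces no contradiction on the all-zeros side: a run of three or four zeros is perfectly compatible with being some $\mu(f)$. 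Indeed the example above lies at distance just over $2^{-6}=2^{-(1+5)}$ from the dyadic rational $1/2$, which shows both that the constant $5$ is essentially sharp and that any argument forcing only three or four equal bits cannot close.

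The repair is to use the correct structural fact, which concerns aligned blocks: every aligned triple of $\mu(f)$ contains exactly one $1$, so any window of five consecutive bits contains a complete block and hence cannot be constantly $0$; and any window of three consecutive bits meets at most two blocks, so it contains at most two $1$s and cannot be constantly $1$. Then observe that $|\mu(f)-k/2^{n}|\le 2^{-(n+5)}$ forces \emph{five} consecutive equal bits: if $\mu(f)>k/2^{n}$, the tail beyond position $n$ is strictly less than $2^{-(n+5)}$ (strictness because $\mu(f)$ is not dyadic), so bits $n+1,\dots,n+5$ are all $0$; if $\mu(f)<k/2^{n}$, then $\mu(f)-(k-1)/2^{n}\ge 2^{-n}-2^{-(n+5)}$, so bits $n+1,\dots,n+5$ are all $1$. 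Either case contradicts the block structure. Your carry/borrow discussion is the right instinct for why two cases are needed, but the count of forced bits must be five, not three, and the forbidden configuration must be ``five equal bits'' (for zeros) rather than ``three equal bits''.
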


\begin{proof}
A reasonably simple consequence of the fact that $\mu(f)$ has a binary expansion consisting
only of triples of the form 001 or 100 or 010. 
\end{proof}

Of course we have, using this coding technique,

\begin{propos}\label{exponentialbounds}
Given that the unknown mass $\mu$ is of the form $\mu(f)$ for some advice function $f$, at any step of the {\em bisection method} the lower and the upper bounds on the time of a single experiment are exponential in the word length.
\end{propos}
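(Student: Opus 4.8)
The plan is to sandwich the single-experiment time using the two-sided estimate of Inequality \ref{ctimesim}, $A/|m-\mu|\le t_{exp}\le B/|m-\mu|$. The lower bound is essentially Proposition \ref{cexponentiallowerbounds}, so the genuinely new content is the matching \emph{upper} bound, and this is exactly where the hypothesis $\mu=\mu(f)$ (as opposed to an arbitrary mass) is indispensable.

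First I would pin down the test mass employed at the $n$th stage of the bisection method. After the \textbf{while} loop has descended to the $n$th place, the variable $m=(m_1+m_2)/2$ is the midpoint of the current bracketing interval, hence a dyadic rational of the form $k/2^{n}$ for some integer $k$, whose word length $|m|$ is $\Theta(n)$. Consequently an estimate that is exponential in $n$ is the same as one exponential in the word length, which is what the statement demands. For the lower bound I would then recall, as in the proof of Proposition \ref{cexponentiallowerbounds}, that $\mu$ is trapped in the bracketing interval of width at most $2^{-n}$, so $|m-\mu|\le 2^{-n}$; feeding this into the left half of Inequality \ref{ctimesim} gives $t_{exp}\ge A/|m-\mu|\ge A\,2^{n}$.

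The heart of the argument is the upper bound, and here I would invoke $\mu=\mu(f)$ directly. Since the test mass $m=k/2^{n}$ is a dyadic rational in $(0,1)$, Proposition \ref{teo2} supplies the separation $|\mu-m|=|\mu(f)-k/2^{n}|>1/2^{n+5}$. Substituting into the right half of Inequality \ref{ctimesim} yields $t_{exp}\le B/|m-\mu|<B\,2^{n+5}=32\,B\,2^{n}$. Combined with the lower bound, this confines the time of a single experiment between two quantities exponential in $n$, i.e.\ exponential in the word length, completing the proof.

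The step carrying all the weight is the upper bound. For a generic unknown mass $\mu$ no upper bound exists at all: the test mass $m$ could fall arbitrarily close to $\mu$ (indeed $m=\mu$ is possible), whereupon $t_{exp}\to\infty$ by Equation \ref{cdddvsducv} and the experiment never terminates (cf.\ Proposition \ref{massundecidability}). The entire purpose of coding advice as $\mu(f)$ through triples drawn only from $\{001,100,010\}$, and thereby excluding $000$ and $111$, is to force $\mu(f)$ to remain a fixed distance from every dyadic rational; this quantitative separation is precisely Proposition \ref{teo2}. Once that separation is available the remainder is the routine substitution above, but without it the upper bound — and hence the proposition — fails.
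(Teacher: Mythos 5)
Your proof is correct and follows exactly the route the paper intends (the paper states the proposition without proof, as an immediate consequence of the coding): the lower bound is Proposition \ref{cexponentiallowerbounds} via $|m-\mu|\le 2^{-n}$, and the upper bound comes from the separation $|\mu(f)-k/2^{n}|>1/2^{n+5}$ of Proposition \ref{teo2} fed into the right half of Inequality \ref{ctimesim} --- precisely the argument the paper itself deploys later in the proof of Proposition \ref{teo5}. Your observation that the hypothesis $\mu=\mu(f)$ is what makes the upper bound possible is exactly the point of the coding scheme.
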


Why not just use the class $log$? The collider machine experiment allows us to read only $log(n)$ bits of the unknown mass in polynomial time because of the exponential time delay in the protocol. Assuming that $f$ is an advice in $log$, to encode the advice information, for $n = 0$ to $n = p$, we need $f(0)$, ..., $f(p)$. A way to encode this information requires the concatenation of all these sequences that amounts to a number of bits of the order of $n \log(n)$, greater than logarithmic size. This problem is avoided with the prefix advice $log*$.

\subsection{The error-free {\sc collider} can decide $P/log*$ in polynomial time.}\label{Plog}

In this subsection we will show that error-free colliders with an exponential AD-protocol can decide all the sets in $P/log*$ in polynomial time. We leave the more complicated problem of determining an upper bound for the complexity of the sets that these machines can decide in polynomial time (see \cite{beggsetal:09c}).

Let $A$ be a set in $P/log*$, and, by definition, let $B \in P$, $f \in log*$ be such that \[ w \in A \iff \langle w, f(|w|) \rangle \in B. \] Now take an AD-collider with the unknown mass set to $\mu(f)$, as described in subsection \ref{code}, and exponential time protocol. For a word $w$, if we can determine the advice $f(|w|)$ in polynomial time in $|w|$, then the Turing machine can determine $\langle w, f(|w|) \rangle$ in polynomial time, so the decision problem $w \in A$ can be solved in polynomial time in $|w|$. In turn, to determine $f(|w|)$ it suffices to show that we can read the first $\mathit{log(n)}$ binary places of the unknown mass $\mu(f)$ in polynomial time in $n$.

\begin{propos}\label{teo3} 
An error-free analogue-digital collider with an exponential AD-protocol can determine the first $\mathit{log(n)}$ binary places of the unknown mass $\mu(f)$ in polynomial time in $n$, where $n$ is the size of the input.
\end{propos}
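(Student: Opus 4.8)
The plan is to run the bisection procedure $Bisection(T)$ for its first $\lceil\log n\rceil$ iterations, with an exponential schedule $T$ chosen below, and to output the accumulated word $m$; its bits are exactly the first $\log n$ binary places of $\mu(f)$. The only thing that can go wrong is timing. By Equation~\ref{ctimesim} a single collision with test mass $m$ takes time of order $1/|m-\mu(f)|$, so if some test mass happened to fall extremely close to $\mu(f)$ the experiment would blow up past any polynomial bound. Controlling this single-experiment time is the crux, and it is precisely here that Proposition~\ref{teo2} is used.

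First I would observe that at the iteration producing the $k$th bit, the test mass $m$ is a dyadic rational with denominator $2^{k}$, i.e.\ $m=j/2^{k}$ for an integer $j$. Proposition~\ref{teo2} then yields the separation
\[
|\mu(f)-m| \;>\; \frac{1}{2^{\,k+5}} .
\]
Substituting into the upper bound of Equation~\ref{ctimesim}, the time of this single collision obeys
\[
t_{exp} \;<\; \frac{B}{|\mu(f)-m|} \;<\; B\,2^{\,k+5}.
\]
For every $k\le\log n$ we have $2^{k}\le n$, hence $t_{exp}<32\,B\,n$: each of the first $\log n$ collisions terminates in time linear in $n$, in stark contrast to the unbounded-time situation of Proposition~\ref{massundecidability}.

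It then remains to fix the schedule and sum the costs. I would take a time-constructible exponential schedule dominating $\ell\mapsto B\,2^{\ell+6}$, which is an exponential AD-protocol of the kind required by Definition~\ref{langdetercollider} and which, by the bound above, strictly exceeds the true running time of every collision used to extract the first $\log n$ bits; hence none of these experiments times out and each returns the correct comparison between $\mu(f)$ and $m$. By the protocol the machine waits the full $T(k)\le T(\log n)=O(n)$ on each query, while the internal bookkeeping (forming midpoints, writing queries of length at most $\log n$, appending bits) costs at most a polynomial in $n$ per step. Summing over the $\log n$ iterations gives total running time $O(n\log n)$, polynomial in $n$, with output $m$ agreeing with $\mu(f)$ in its first $\log n$ places. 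The main obstacle is thus not the algorithm but the timing estimate: everything hinges on Proposition~\ref{teo2} pushing the dyadic test masses away from $\mu(f)$ by at least $2^{-(k+5)}$, which is exactly what keeps each of the first $\log n$ collisions within polynomial time despite the $1/|m-\mu|$ growth in Equation~\ref{ctimesim}.
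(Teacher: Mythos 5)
Your proof is correct and follows essentially the same route as the paper: run the bisection method under an exponential schedule, bound each of the $O(\log n)$ collisions by a polynomial in $n$, and sum. The only difference is presentational --- you justify the per-collision upper bound explicitly via Proposition~\ref{teo2}, whereas the paper leaves that step implicit in Proposition~\ref{exponentialbounds}; your version is the more complete argument.
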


\begin{proof}
Use the {\em bisection method} with the exponential protocol, which requires $k' \times \mathit{log(n)}$ collisions for some fixed constant $k'$, with collision $i$ requiring time $O(2^{k''i}) \subseteq O(2^{k \times \mathit{log(n)}}) = O(n^k)$, with $k = k'' \times k'$. Adding all these times gives a total amount of time polynomial in $n$. 
\end{proof}

\begin{theorem}\label{teo4}
An error-free analogue-digital collider with an exponential AD-protocol can decide $P/log*$ in polynomial time.
\end{theorem}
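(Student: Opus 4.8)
The plan is to derive the theorem as a direct combination of the advice-coding construction of subsection \ref{code} with the reading procedure of Proposition \ref{teo3}, so that essentially all the computational content is already in hand. First I would unpack the hypothesis $A\in P/log*$: by Definition \ref{prefixclass} there is a set $B\in P$, witnessed by a polynomial-time Turing machine $M_B$, together with a prefix advice function $f\in log*$, such that for every length bound $n$ and every $w$ with $|w|\le n$ one has $w\in A$ if and only if $\langle w,f(n)\rangle\in B$. I would then configure the collider so that its unknown mass equals $\mu(f)$, the real number coding $f$ defined in subsection \ref{code}; the purpose of that construction is precisely that $f$ can be recovered from logarithmically many leading binary digits of $\mu(f)$.

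The decision procedure on an input $w$ of length $n$ then proceeds as follows. Choose $m$ with $2^{m-1}<n\le 2^m$. Using the bisection method with the exponential AD-protocol, read the leading binary places of $\mu(f)$, one bit per collision; by Proposition \ref{teo3} the first $\log(n)$ of these places are obtained in time polynomial in $n$, and by the length bound $|c(f(n))|\le L\log_2 n+K$ this is already more than enough to recover $f(2^m)$. I would then decode exactly as in subsection \ref{code}: scan the expansion until $m+1$ separating $001$ triples have been counted, delete the separators, and reconstruct $f(2^m)$. Since $f$ is a prefix function and $|w|\le 2^m$, the word $f(2^m)$ is valid advice for $w$, so I form the pair $\langle w,f(2^m)\rangle$ in linear time and run $M_B$ on it, accepting $w$ exactly when $M_B$ accepts. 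By the choice of $B$ and $f$ this is exactly the condition $w\in A$, so the collider decides $A$.

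For the running time I would add the three contributions: the bisection reads $O(\log n)$ bits in polynomial time by Proposition \ref{teo3}, the decoding and the pairing are polynomial since only $O(\log n)$ digits are scanned, and $M_B$ runs in polynomial time on an argument of polynomial size; the sum is polynomial in $n$. Because the preceding propositions already carry the substantive work, the only genuine obstacle remaining in the theorem itself is to verify that the reading step is both sound and sufficient. The delicate point is that the bisection must correctly resolve each of the first $\log(n)$ digits within its exponential time budget: this rests on Proposition \ref{teo2}, which keeps $\mu(f)$ at distance at least $1/2^{n+5}$ from every dyadic test mass $k/2^n$, guaranteeing that no collision at stage $i\le\log(n)$ stalls beyond the $O(2^{k''i})\subseteq O(n^k)$ bound used in Proposition \ref{teo3}. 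With that separation invoked, both correctness and the polynomial bound follow.
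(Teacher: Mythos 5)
Your proposal is correct and follows essentially the same route as the paper: fix the witnesses $B\in P$ and $f\in log*$, set the unknown mass to $\mu(f)$, read the first $O(\log n)$ binary places by the bisection method with the exponential protocol (Proposition \ref{teo3}), decode the advice via the $001$ separators, and feed $\langle w, f(\cdot)\rangle$ to the polynomial-time decider for $B$. Your explicit appeal to Proposition \ref{teo2} to bound each experiment's duration is a welcome elaboration of what the paper leaves implicit in Proposition \ref{exponentialbounds}, but it is not a different argument.
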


\begin{proof}
From the discussion earlier in this subsection, and the Proposition \ref{teo3}. 
\end{proof}

\subsection{The error-prone {\sc collider} with arbitrary precision can decide $P/log*$ in polynomial time.}\label{errorPlog}

Now we come to the error-prone arbitrary precision case, which is solved in almost exactly the same way.  The work lies in choosing the errors so that the same process actually works, and for that we need Proposition \ref{teo2}.

\begin{propos}\label{teo5} 
An error-prone arbitrary precision analogue-digital collider with a strictly exponential AD-protocol can determine the first $log(n)$ binary places of the unknown mass $\mu(f)$ in polynomial time in $n$, where $n$ is the size of the input.
\end{propos}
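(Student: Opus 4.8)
The plan is to run the very same bisection argument as in the proof of Proposition~\ref{teo3} and to isolate the single new difficulty caused by the inability to set the test mass exactly. First I would recall the shape of the error-free argument: to recover the first $\log(n)$ places of $\mu(f)$ the machine performs $O(\log n)$ collisions, where the collision that determines the $i$th place fires a dyadic test mass of the form $m=k/2^{i}$, and by Proposition~\ref{exponentialbounds} (equivalently, by the protocol time law $K/|\mu-m|\pm N$) this collision costs time $O(2^{i})$, so that the $O(\log n)$ collisions together cost time polynomial in $n$. The only thing that can go wrong in the error-prone arbitrary-precision case of Definition~\ref{CME} is that, when the machine requests mass $m$, the apparatus actually fires a particle of some mass $m'$ with $|m'-m|\le\delta$, and this perturbation could in principle push the test mass across $\mu$, reversing the flag that is observed and corrupting a bit of the bisection.

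The key step is to rule this out using Proposition~\ref{teo2}. At the stage that reads the $i$th place the requested mass is the dyadic rational $m=k/2^{i}$, and Proposition~\ref{teo2} guarantees the separation $|\mu(f)-m|>1/2^{i+5}$. I would therefore exploit the \emph{arbitrary} precision: since the model permits a nonzero but arbitrarily small tolerance, choose it at this stage to be $\delta_i\le 1/2^{i+6}$, strictly below the guaranteed gap. Then the actual mass $m'$ lies on the same side of $\mu$ as $m$, so the flag crossed ($P^-$ versus $P^+$) is exactly the one the error-free experiment would have produced; consequently every bit delivered by the bisection is correct, and it is correct \emph{deterministically}, with no residual probability of error. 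This is precisely what lets the conclusion live in a deterministic class rather than a probabilistic one, in contrast with the fixed-precision case.

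It remains to check timing and scheduling. From $\delta_i\le 1/2^{i+6}$ and the triangle inequality, $|m'-\mu|\ge |m-\mu|-\delta_i>1/2^{i+5}-1/2^{i+6}=1/2^{i+6}$, so by the protocol time law the collision completes within time $K\cdot 2^{i+6}+N=O(2^{i})$. I would then fix the strictly exponential schedule $T$ to be an exponential in the query word length that dominates $K\cdot 2^{i+6}+N$ for every word of length at least $i$; this guarantees that no run is aborted by a premature time out, while keeping each $T(|m|)$ within $O(2^{i})$. Summing over the $O(\log n)$ collisions, exactly as in Proposition~\ref{teo3}, yields a total running time polynomial in $n$.

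I expect the main obstacle to be conceptual rather than computational: verifying that a \emph{single} choice of tolerance, $\delta_i$ below the Proposition~\ref{teo2} gap, simultaneously secures (a) correctness of the observed flag and (b) an upper bound on the experiment time that keeps the schedule exponential and the aggregate cost polynomial. Once the separation estimate of Proposition~\ref{teo2} is in hand, the remainder is a transcription of the error-free argument of Proposition~\ref{teo3} with $m$ replaced by $m'$.
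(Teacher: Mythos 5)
Your proposal is correct and follows essentially the same route as the paper's own proof: run the bisection method, set the precision at stage $i$ to $1/2^{i+6}$, invoke Proposition \ref{teo2} to guarantee that $\mu(f)$ lies outside the error interval (so the observed flag is unchanged) and that the actual test mass stays at distance at least $1/2^{i+6}$ from $\mu$ (so each collision remains exponentially bounded in $i$ and the total is polynomial in $n$). Your explicit triangle-inequality computation merely spells out a step the paper leaves implicit.
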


\begin{proof}
Use the {\em bisection algorithm} again. At the $i$-th stage in the bisection process (involving dyadic rationals with denominators $2^i$), we set the error in the mass $k/2^i$ of the test particle to be $1/2^{i+6}$, i.e.\ that the mass of the test particle lies in the interval $[k/2^i - 1/2^{i+6},k/2^i + 1/2^{i+6}]$.
By Proposition \ref{teo2}, the unknown mass cannot be in the error interval about the given dyadic rational, and thus the result of the experiment is the same as though the mass of the test particle was the given dyadic rational $k/2^i$. Also by Proposition \ref{teo2} the distance between the test mass and $\mu$ is at least $1/2^{i+6}$, so the experimental time is bounded by an 
exponential in $i$. 
Thus the first $log(n)$ binary places of the unknown mass $\mu(f)$ can be read in polynomial time in $n$. 
\end{proof}

\begin{theorem}\label{teo6}
An error-prone arbitrary precision analogue-digital collider with an exponential AD-protocol can decide $P/log*$ in polynomial time.
\end{theorem}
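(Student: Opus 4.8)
The plan is to run exactly the argument used for Theorem~\ref{teo4}, replacing the reading step supplied by Proposition~\ref{teo3} with its error-prone counterpart, Proposition~\ref{teo5}. All of the conceptual difficulty has already been absorbed into Proposition~\ref{teo5}; the theorem itself is a routine corollary, obtained by assembling the pieces.

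First I would unpack membership in $P/log*$: given $A \in P/log*$, Definition~\ref{prefixclass} supplies a set $B \in P$ and a prefix advice function $f \in log*$ such that, for every length $n$ and every $w$ with $|w| \le n$, $w \in A$ if and only if $\langle w, f(n)\rangle \in B$. I then fix the unknown mass of the collider to be $\mu(f)$, the real number coding $f$ constructed in subsection~\ref{code}, and configure the equipment as error-prone with arbitrary precision driven by an exponential AD-protocol.

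Next, on an input $w$ of length $n$, I would invoke Proposition~\ref{teo5} to read the first $O(\log n)$ binary places of $\mu(f)$ in time polynomial in $n$. From these places the advice $f(n)$ is reconstructed by the decoding of subsection~\ref{code}: read the binary expansion until $m+1$ separating triples $001$ have been counted (where $2^{m-1} < n \le 2^m$), delete the extra $001$s, and invert the block code $c$. This requires only linearly many steps in the number of bits read, hence polynomial time. Having recovered $f(n)$, the machine forms $\langle w, f(n)\rangle$ and simulates the polynomial-time decider for $B$, accepting precisely when that decider accepts. Since reading, decoding, and the simulation of $B$ are each polynomial in $n$, the entire computation runs in polynomial time and decides $A$, as required.

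The only place where error-proneness could do damage is in the reading step, where a mis-set test mass might be confused with $\mu(f)$; but this is exactly the scenario ruled out by Proposition~\ref{teo5}, which, through Proposition~\ref{teo2}, keeps $\mu(f)$ at distance at least $1/2^{i+6}$ from every dyadic test value $k/2^i$ and hence outside the error interval $[k/2^i - 1/2^{i+6}, k/2^i + 1/2^{i+6}]$. Consequently the arbitrary-precision reading behaves identically to the error-free reading, and the theorem follows directly from Proposition~\ref{teo5} together with the discussion opening subsection~\ref{Plog}. I expect no genuine obstacle at the level of the theorem itself; the substantive work is the error analysis already carried out in Proposition~\ref{teo5}.
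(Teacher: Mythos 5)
Your proposal is correct and follows exactly the paper's own route: the paper proves Theorem~\ref{teo6} by citing the reduction set out at the start of subsection~\ref{Plog} (code the advice as the unknown mass $\mu(f)$, read $O(\log n)$ digits, reconstruct $f(n)$, and run the polynomial-time decider for $B$) together with Proposition~\ref{teo5}, which is precisely the assembly you carry out. Your added remarks on why Proposition~\ref{teo2} protects the reading step against the error interval merely make explicit what the paper delegates to the proof of Proposition~\ref{teo5}.
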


\begin{proof}
From the discussion earlier in this subsection, and the Proposition \ref{teo5}. 
\end{proof}

As the reader can notice, from the point of view of complexity classes so far considered, there is no such difference between {\em infinite precision} and {\em unbounded precision}. Readers tend to think that infinite precision is needed to achieve the power of such non-uniform complexity classes. But as far as we analysed in \cite{beggsetal:08c} and in the current paper, no such difference exists.

\subsection{The error-prone {\sc collider} with fixed {\em a priori} precision can decide $BPP//log*$ in polynomial time.}\label{BPPlog}

The mass of the test particle is set at some dyadic rational $m$ up to some dyadic fixed precision
$\varepsilon$. We will show that such machines may, in polynomial time, make probabilistic guesses of up to a $\mathit{log}$-number of
digits of the value of the unknown mass. We will then conclude that these collider machines decide exactly $BPP//\log*$. It is important to note that these machines are allowed to keep running after an experiment times out --- the result `time out' is returned by the physical oracle.

\begin{propos}\label{bpplog*}
For any real value $\delta<\frac{1}{2}$ and prefix function $f \in \log*$, there is an error-prone analogue-digital collider machine with fixed precision which obtains $f(n)$ in polynomial time with a probability of error of at most $\delta$.
\end{propos}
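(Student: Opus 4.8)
The plan is to run the bisection method of subsection~\ref{code} to read the $O(\log n)$ binary places of $\mu(f)$ that are needed to reconstruct $f(n)$, but now to replace each exact comparison ``is $\mu>m$?'' by a \emph{statistical} decision obtained from repeated fixed-precision shots. First I would fix the model: at the $i$th stage the bisection point is a dyadic rational $m=k/2^{i}$, and when the machine requests mass $m$ the apparatus actually fires a mass $m'$ drawn uniformly from $[m-\varepsilon,m+\varepsilon]$. A single shot then returns \emph{reflected} (when $m'<\mu$), \emph{forward} (when $m'>\mu$), or \emph{time out} (when $|m'-\mu|$ is so small that the run exceeds a chosen limit $T_i$). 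Encoding these three outcomes as $+1$, $-1$ and $0$, the heart of the argument is to compute the expected value of one shot.

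The key computation is this: writing $\mu=m+\Delta$ and using the uniform density $1/(2\varepsilon)$, the reflected and forward probabilities differ by exactly $\Delta/\varepsilon$, and --- this is the crucial point --- the time-out region $\{\,|m'-\mu|<K/T_i\,\}$ is an interval \emph{centred on $\mu$}, so it removes equal mass from the reflected and forward sides and leaves this difference unchanged. Hence a single shot has expectation $(\mu-m)/\varepsilon$, whose sign already records whether $\mu>m$. By Proposition~\ref{teo2} the bisection point satisfies $|\mu-m|\ge 2^{-i-5}$, so the bias is bounded below in absolute value by $2^{-i-5}/\varepsilon$; it never collapses to zero, even though with fixed precision $\varepsilon$ eventually dwarfs the spacing of the dyadics.

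Next I would amplify. Repeating the $i$th shot $r_i$ times and deciding by the sign of the sum $S_i$ of the $\pm1/0$ outcomes, Hoeffding's inequality (each summand lies in $[-1,1]$) gives a probability of a wrong sign that decays like $\exp(-c\,r_i\,2^{-2i}/\varepsilon^{2})$. For the $O(\log n)$ stages we have $i\le O(\log n)$, so $2^{2i}$ is bounded by a fixed power of $n$; choosing $r_i=\Theta(\varepsilon^{2}2^{2i}\log(\log n/\delta))$ therefore makes each stage err with probability at most $\delta/O(\log n)$ while keeping $r_i$ polynomial in $n$. Taking the time limit $T_i$ polynomial in $n$ (large enough that masses far from $\mu$ always resolve, cutting off only a negligible central time-out sliver) bounds each shot's cost, and summing $r_i\cdot T_i$ over the $O(\log n)$ stages keeps the total running time polynomial.

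Finally I would close with a union bound over the $O(\log n)$ comparisons: since the distance bound of Proposition~\ref{teo2} holds for \emph{every} dyadic point $k/2^{i}$, each stage is decided correctly with probability at least $1-\delta/O(\log n)$ irrespective of the earlier (random) history, so all stages are correct --- and hence $f(n)$ is reconstructed --- with probability at least $1-\delta$. I expect the main obstacle to be the bias analysis together with the time-out bookkeeping: one must verify that imposing a finite time limit does not destroy the delicate $(\mu-m)/\varepsilon$ bias (the symmetric-cancellation observation), and then balance $\varepsilon$, the timeout $T_i$, and the repetition count $r_i$ so that the error falls below $\delta$ while the aggregate time stays polynomial. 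Proposition~\ref{teo2} is exactly what prevents the per-stage bias from decaying faster than polynomially over the range $i\le O(\log n)$.
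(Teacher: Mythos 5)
Your argument is correct, but it is a genuinely different proof from the one in the paper. The paper does not run a randomized bisection in the fixed-precision case at all: it \emph{rescales} the encoded real $s$ into the precision window by setting the unknown mass to $\mu=\frac{1}{2}-\varepsilon/2+s\,\varepsilon$, always fires at the single requested value $m=\frac{1}{2}$, and recovers $s$ from the empirical \emph{frequencies} of the three outcomes: with $\alpha,\beta,\gamma$ counting reflected, forward and timed-out shots among $\zeta$ trials, it forms $X=2\alpha+\gamma$ (chosen precisely so that the unknown timeout half-width $\eta$ cancels from the mean), computes $\bar X=\zeta(\mu+\varepsilon-\frac{1}{2})/\varepsilon$ and $\mathrm{Var}(X)\le 3\zeta/4$, and applies Chebyshev to estimate $s$ to within $2^{-(k+5)}$ — which by Proposition \ref{teo2} pins down the first $k$ coded digits — using $\zeta=O(2^{2k}/\delta)$ shots, polynomial in $n$ since $k=O(\log n)$. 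Your route instead keeps $\mu=\mu(f)$ fixed, turns each bisection comparison into a noisy sign test at a dyadic midpoint, lower-bounds the per-shot bias $(\mu-m)/\varepsilon$ via Proposition \ref{teo2}, and amplifies with Hoeffding plus a union bound over the $O(\log n)$ stages. Both are polynomial time and both rest on the same two modelling points (the symmetric cancellation of the timeout sliver about $\mu$, and the exact uniform distribution of the fired mass). What yours buys is that the unknown mass need not be tuned to the apparatus's precision $\varepsilon$ — a dependence the paper explicitly concedes in the remark following the proposition; what the paper's buys is that only one test-mass value is ever requested, the timeout analysis is done once for a single fixed interval $(\mu-\eta,\mu+\eta)$ rather than per stage, and a single non-adaptive estimation replaces an adaptive sequence of decisions.
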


\begin{proof}
We encode the prefix function $f \in \log*$ as a real number $s\in (0,1)$  using the method of Section (\ref{code}). Our problem of how to obtain $f(n)$ in polynomial time becomes
how to read the first $ \lfloor a \log n + b \rfloor$ digits of $s$ in polynomial time
(for some constants $a,b$).

Suppose
that $\varepsilon$ is the a priori fixed error in measuring the mass of the proof
particle. We then set the unknown mass of our analogue-digital collider
machine at the value $\mu = \frac{1}{2} - \varepsilon/2 + s \varepsilon$, 
so that $\mu\in [\frac{1}{2} - \varepsilon/2,\frac{1}{2} + \varepsilon/2]$. 
Set a time limit $T$ on each experiment so that if $|m-\mu|\ge\varepsilon/4$
then we are guaranteed a result in time $T$. Then there is an $\eta\in(0,\varepsilon/4)$ 
so that the experiment runs out of time (exceeds the bound $T$) 
on the interval $(\mu-\eta,\mu+\eta)$ - the exact value of $\eta$ is not important, what is important is that the interval is symmetric about $\mu$. (In this proof we will ignore the end points of intervals, as they occur as random values of the test mass with probability zero.)

Our method for guessing digits of $s$ begins by commanding the collider to
shoot proof particles with mass $m=\frac{1}{2} \ (\pm \ \varepsilon)$ a number
$\zeta$ times. Then we get the following results with the stated probabilities:

1)\quad $m<\mu$, with probability $p=(\mu-\eta+\varepsilon-\frac12)/(2\,\varepsilon)$ -- this occurs a random number $\alpha$ times,

2)\quad $m>\mu$, with probability $q=(\frac12+\varepsilon-\mu-\eta)/(2\,\varepsilon)$ -- this occurs a random number $\beta$ times,

3)\quad out of time, with probability $r=2\,\eta/(2\,\varepsilon)$ -- this occurs a random number $\gamma$ times.

\noindent
By our assumption of independent uniform distribution for $m$ 
in the interval $[\frac{1}{2} - \varepsilon,\frac{1}{2} + \varepsilon]$, the resulting distribution is a trinomial with probability for $(\alpha,\beta,\gamma)$ being
\begin{eqnarray*}
\frac{p^\alpha\ q^\beta\ r^\gamma\ \zeta!}{\alpha!\ \beta!\ \gamma!}\quad,\quad \alpha+\beta+\gamma=\zeta\ .
\end{eqnarray*}
Now we consider the random variable $X=2\,\alpha+\gamma$, a combination chosen to cancel the number $\eta$ from the mean. This is fairly easily seen to have mean $\bar X=\zeta\,(\mu+\varepsilon-\frac12)/\varepsilon$, but its variance $\mathrm{Var}(X)$ is a little more difficult to find. 
\begin{eqnarray*}
\mathrm{Var}(X) 
&=& 4\,\mathrm{Var}(\alpha)+\mathrm{Var}(\gamma)+4\,\mathrm{Covar}(\alpha,\gamma) \cr
&=& 4\,\mathrm{Var}(\alpha)+\mathrm{Var}(\gamma)+4\,(\mathbb{E}(\alpha\,\gamma)-
\bar\alpha\,\bar\gamma) \ .
\end{eqnarray*}
A little calculation yields the expectation
\begin{eqnarray*}
\mathbb{E}(\alpha\,\gamma) = \zeta\,(\zeta-1)\,p\,r\ ,
\end{eqnarray*}
and substituting in $\bar\alpha=\zeta\,p$, $\bar\gamma=\zeta\,r$,
$\mathrm{Var}(\alpha)=\zeta\,p(1-p)$ and $\mathrm{Var}(\gamma)=\zeta\,r(1-r)$ gives
\begin{eqnarray*}
\mathrm{Var}(X)
&=& 4\,\zeta\,p(1-p)+\zeta\,r(1-r)-4\,\zeta\,p\,r \cr
&=& \zeta\  \frac{3\,\varepsilon+4\,s\,\varepsilon-4\,s^2\,\varepsilon-4\,\eta}{4\,\varepsilon} \cr
& \le & \zeta\  \frac{2\,\varepsilon+4\,s\,(1-s)\,\varepsilon}{4\,\varepsilon} \,\le\, \frac{3\,\zeta}4\ .
\end{eqnarray*}
Now use Chebyshev's inequality, which says
for every $\Delta>0$, 
\[\mathbb P ( |\bar X-X| > \Delta )  \le \frac{\mathrm{Var}(X)}{\Delta^2}.\] 
Putting $\Delta=x\,\zeta$ here gives
\begin{eqnarray*}
\mathbb P\Big( \left| \frac12+s-\frac{X}{\zeta}
\right| > x \Big) \le \frac{\mathrm{Var}(X)}{x^2\,\zeta^2}  \le \frac{3}{4\,x^2\,\zeta}\ .
\end{eqnarray*}
To read the $k$th binary place of $s$, according to the coding, it is sufficient to find $s$ accurate to
a value of $1/2^{k+5}$, and then the probability of error is
\begin{eqnarray*}
\mathbb P\Big( \left| \frac12+s-\frac{X}{\zeta}
\right| > 1/2^{k+5} \Big)  \le \frac{3\ 2^{2\,k+10}}{4\,\zeta}\ .
\end{eqnarray*}
To do this within probability of error $\delta$, we need a number of experiments
\begin{eqnarray*}
\zeta >  \frac{3\ 2^{2\,k+10}}{4\,\delta}\ .
\end{eqnarray*}
As $k$ is logarithmic in $n$, the result is polynomial time in $n$.
\end{proof}

The Proposition \ref{bpplog*} will guarantee us that for every fixed error $\varepsilon$ we can find an unknown mass that will allow for an CME to extract the desired information. It does not state that we can make use of any unknown mass independently of the fixed error $\varepsilon$. It can be shown, however, that if $\varepsilon$ is a dyadic rational, then we may guess $O(\log(n))$ digits of the unknown mass in polynomial time.

\begin{theorem}\label{thmbpplog*}
An  error-prone analogue-digital collider with an exponential AD-protocol can decide $BPP//\log*$ in polynomial time.
\end{theorem}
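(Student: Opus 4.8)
The plan is to imitate the proofs of Theorems \ref{teo4} and \ref{teo6}: the result should fall out of Proposition \ref{bpplog*} together with the definition of $BPP//\log*$ (Definition \ref{barbarlog}), the one genuinely new point being the bookkeeping of two independent sources of randomness. As with those earlier theorems, I read the statement as the lower bound, namely that every $A \in BPP//\log*$ is decided in polynomial time by a fixed-precision analogue-digital collider in the sense of Definition \ref{langprobcollider}.

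First I would take an arbitrary $A \in BPP//\log*$ and unwind Definition \ref{barbarlog}: there are a probabilistic Turing machine $TM$ clocked in polynomial time, a prefix function $f \in \log*$, and a constant $0 < \gamma < \frac{1}{2}$ such that, for every length $n$ and every $w$ with $|w| \le n$, the machine $TM$ on input $\langle w, f(n)\rangle$ errs with probability at most $\gamma$ (that is, it rejects when $w \in A$, or accepts when $w \notin A$, with probability at most $\gamma$). I would then set up the fixed-precision collider exactly as in the proof of Proposition \ref{bpplog*}: encode $f$ as the real $s \in (0,1)$ by the method of Section \ref{code} and fix the unknown mass at $\mu = \frac{1}{2} - \varepsilon/2 + s\varepsilon$, so that the fixed-precision shots about $\frac{1}{2}$ carry information about $s$. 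Note that this encoding legitimately depends on the a priori error $\varepsilon$, which is permitted since we are free to choose the oracle.

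The collider's procedure on input $w$ would be: run the probabilistic digit-guessing of Proposition \ref{bpplog*} to read enough binary places of $s$ and, via the decoding of Section \ref{code}, recover an advice value $f(n)$ (with $n \ge |w|$, as furnished by the coding) that is valid for $w$ by the prefix property; then simulate $TM$ on $\langle w, f(n)\rangle$ and accept or reject accordingly. By Proposition \ref{bpplog*}, for any prescribed $\delta < \frac{1}{2}$ this advice is obtained in time polynomial in $|w|$ with probability of error at most $\delta$; I would set $\delta := \frac{1}{2}\left(\frac{1}{2} - \gamma\right)$, so that $\delta + \gamma < \frac{1}{2}$. Forming $\langle w, f(n)\rangle$ and simulating $TM$ add only polynomial overhead, so the whole computation runs in time polynomial in $|w|$.

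The heart of the argument, and the only place where anything has to be checked, is the error analysis. The randomness driving the advice reading (the uniform fluctuations of the test mass over the shots) is independent of the internal randomness of $TM$, so conditioning on whether $f(n)$ was read correctly and taking a union bound gives a total probability of error at most $\delta + (1-\delta)\gamma \le \delta + \gamma < \frac{1}{2}$, uniformly in $w$. This is precisely the bound required by Definition \ref{langprobcollider} for a fixed-precision collider to decide $A$ in polynomial time. The main obstacle is therefore conceptual rather than computational: one must keep the advice-reading error $\delta$ and the intrinsic $BPP$ error $\gamma$ jointly below $\frac{1}{2}$. This is painless here because $\gamma$ is a fixed constant strictly below $\frac{1}{2}$ while $\delta$ is at our disposal; but it is worth stressing that no amplification of $\gamma$ is presumed, since the $//$-convention of Definition \ref{barbarlog} fixes $\gamma$ once and for all, so it is essential that the union bound still leaves room for a strictly positive $\delta$.
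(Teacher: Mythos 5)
Your overall architecture (unwind Definition \ref{barbarlog}, use Proposition \ref{bpplog*} to recover the advice $f(n)$, then combine the two error probabilities by a union bound with $\delta+\gamma<\frac12$) matches the intended argument, but there is a genuine gap in the step you dismiss as polynomial overhead: the simulation of the probabilistic machine $TM$. The analogue-digital collider machine of Definition \ref{langprobcollider} is a Turing machine whose only source of randomness is the stochastic behaviour of the fixed-precision oracle itself; it is not equipped with an independent fair coin. So before you can ``simulate $TM$ on $\langle w, f(n)\rangle$'' you must manufacture $TM$'s coin tosses, and the only way to do that is by further queries to the physical oracle. The paper's proof is essentially entirely about this point: it invokes the coin-toss simulation from \cite{beggsetal:08c} (a fixed-precision experiment can simulate, up to a controllable probability of failure, a sequence of independent fair coin tosses) and records the one modification forced by the CME, namely that an experiment now has \emph{three} outcomes ($m<\mu$, $m>\mu$, time out), whereas the coin-toss construction needs two mutually exclusive events whose union has probability one. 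The fix is to amalgamate `time out' with one of the flag crossings. Your proposal never confronts either issue.

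Once the coin tosses are also extracted from the oracle, your independence claim (``the randomness driving the advice reading \dots\ is independent of the internal randomness of $TM$'') also needs justification: both sources of randomness are the same physical device, so independence has to be arranged (for instance by using disjoint blocks of shots for advice reading and for coin generation) rather than asserted. Moreover, failure of the coin-toss simulation is a third error event, so the error budget must be split three ways rather than two; this is harmless because $\gamma<\frac12$ is strict, but it belongs in the error analysis. None of this invalidates your skeleton, but the coin-toss simulation and the time-out amalgamation are the actual content of the paper's proof, and they are absent from yours.
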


\begin{proof}
It is a consequence of the definition of the class $BPP//\log*$, taking in consideration the result of Proposition \ref{bpplog*}. See details in \cite{beggsetal:08c}. There is one minor modification to take into account the possibility of a `time out' result on an experiment. In the proof that we can (up to a given probability of failure) simulate a sequence of independent coin tosses, we need two mutually exclusive events whose union has probability one. To achieve this, we use the test particle crossing a given flag as one event, and the union of crossing the other flag and `time out' as the other event. That is, we
amalgamate two of the possibilities listed in the proof of Proposition \ref{bpplog*}.
\end{proof}

Measurements based on this more realistic assumption about precision lead to an apparent increase in power from $P/log*$ to $BPP//\log*$. This is because of the introduction of probabilities. (It is apparent because $P \subseteq BPP$ but we do not know if $P = BPP$.)  Since the class $BPP//\log*$ contains non-computable sets,  the machines can decide super-Turing languages.

\section{Measurability by the CME}\label{sec:measurability}
Fundamentally, our experiment CME tries to find an unknown mass $\mu$ by a sequence of comparisons with known masses $m$, which are rational multiples of some standard mass $M$. The aim is, for any given $n$, to find the first $n$ binary places of the number $\mu/M$. 
In the rest of the paper, we assume for convenience that
 $M = 1$. The time $T(|m|)$ taken for one run of the experiment using known mass $m$ is governed by an inequality $T(|m|) \ge K/|m-\mu|$. 

There are good and bad values of $\mu$ that place limitations on our experimental method. In the worst case $\mu$ has dyadic rational values. Here, any run of the experiment with $m=\mu$ will fail to give a result, and we will be left with $\mu$ being strictly inside a dyadic interval. But no matter how small the interval is, we will only be able to determine a fixed number of binary places. For example, if $\mu = 0\!\cdot\!001$ exactly, we will never be able to distinguish between $0\!\cdot\!001$ and $0\!\cdot\!00011\dots10$, where the 1s are repeated sufficiently many times before reaching a 0. 

Let us formulate just what we mean by a mass being measurable:

\begin{defin}\label{measurable}
A mass $\mu$ is said to be {\em measurable} by the CME if there exists a Turing machine $M$, equipped with a computable schedule $T$, such that it prints the first $n$ bits of $\mu$ on the output tape in less than $T(n)$ time steps without timing out in any query.
Similarly it is {\em feasibly measurable} if $T$ can be chosen to be time constructible (recall \cite{balcetal:88}).
\end{defin}

Note the importance of time constructibility here --- it allows the timing to be done by the Turing machine itself in real time, rather than employing another device as a timer. At another extreme, were we to allow $T$ to be non-computable, then any non-dyadic rational would be
measurable. 

What can be measured depends on the experimental procedure chosen to run the experiment. The following concept will prove useful:

\begin{defin}\label{univ55}
A Turing machine $M$ is said to be an {\em universal measuring procedure} for the CME if, for every measurable mass $\mu$, there exists a computable schedule $T$, such that $M$ equipped with $T$ measures $\mu$.
\end{defin}

\subsection{Most masses are measurable}
Here we examine some measurable masses, and show that, in the sense of measure theory, almost all masses are measurable.

\begin{lemma}\label{procedures}
Suppose that $\mu\in[0,1]$, and that the time taken to determine if
$|m-\mu|<\epsilon$ is $K/\epsilon$. We define a series of algorithms $E_r$,
for $r \in \mathbb{N}$, for the Turing machine
using the CME as oracle, as follows:

(a) For all $0 \leq p \leq 2^r$, fire a particle of mass $p/2^r$, using
waiting time $K\, 2^{2r+1}$; and so

(b) the experimental calls in $E_r$ take a total time $K\, 2^{3r+1}$.
\\
Then the measure of the set where algorithm $E_r$ fails to find $\mu$ to $r$
binary places is $\leq 2^{-r}$.
\end{lemma}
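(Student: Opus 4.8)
The plan is to convert the fixed waiting time into a distance resolution and then bound the set of masses $\mu$ that fall too close to one of the grid points $p/2^r$. Since one run with test mass $m$ takes time $K/|m-\mu|$, a waiting window of length $K\,2^{2r+1}$ returns a definite forward/backward verdict precisely when $|m-\mu|\ge 2^{-(2r+1)}$, and fails to return within the window (``times out'') when $|m-\mu|<2^{-(2r+1)}$. So the algorithm $E_r$, which fires every $m=p/2^r$ for $0\le p\le 2^r$, obtains a clean verdict from the run at $p/2^r$ unless $\mu$ lies in the open interval $I_p=(p/2^r-2^{-(2r+1)},\,p/2^r+2^{-(2r+1)})$.

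Next I would argue that if none of the $2^r+1$ runs times out, then $E_r$ succeeds. Indeed, in that case $|p/2^r-\mu|>0$ for every $p$, so $\mu$ is not a grid point, and for each $p$ we learn the sign of $p/2^r-\mu$; this sign flips exactly once as $p$ increases, pinning $\mu$ inside a unique interval $(p/2^r,(p+1)/2^r)$ and hence determining the first $r$ binary places of $\mu$. Consequently the set of $\mu$ on which $E_r$ fails is contained in $\bigcup_{p=0}^{2^r} I_p$.

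It then remains to estimate $\mathrm{measure}\big(\bigcup_{p} I_p\cap[0,1]\big)$. Each $I_p$ has length $2^{-2r}$, and for every $r$ the half-width $2^{-(2r+1)}$ is at most half the grid spacing $2^{-r}$, so the $I_p$ are disjoint apart from possible shared endpoints. The two endpoint intervals $I_0$ and $I_{2^r}$ meet $[0,1]$ in only half their length, while the $2^r-1$ interior intervals lie entirely in $[0,1]$, giving total measure $(2^r-1)\,2^{-2r}+2\cdot\tfrac12\,2^{-2r}=2^r\,2^{-2r}=2^{-r}$, as claimed (the same count yields exactly $2^{-r}$ in the boundary case $r=0$).

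The only genuinely delicate point is the first step: confirming that the window $K\,2^{2r+1}$ corresponds to resolution $2^{-(2r+1)}$, and that a single timeout really is fatal --- because the timeout interval $I_p$ straddles the dyadic boundary $p/2^r$, so the $r$-bit prefix of $\mu$ cannot be decided from the remaining runs. Once that is in place the measure bound is a routine interval-length count, where the only care needed is the halving at the two endpoints so as to land on exactly $2^{-r}$ rather than something marginally larger.
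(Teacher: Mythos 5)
Your proof is correct and follows essentially the same route as the paper: the paper's (much terser) proof defines exactly the failure set $F_r=\bigcup_{0\le p\le 2^r}\bigl[\tfrac{p}{2^r}-\tfrac{1}{2^{2r+1}},\tfrac{p}{2^r}+\tfrac{1}{2^{2r+1}}\bigr]\cap[0,1]$ and asserts its length is $\le 2^{-r}$. You merely supply the details the paper leaves implicit --- the translation of the waiting time into the resolution $2^{-(2r+1)}$, the sign-change argument showing success off $F_r$, and the endpoint-halving needed to get the measure down to exactly $2^{-r}$.
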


\begin{proof}
A set $F_r \subset [0,1]$, of length $\leq 2^{-r}$, is defined by \[ F_r
\,=\, \bigcup_{0 \leq p \leq 2^r} [\frac{p}{2^r} - \frac{1}{2^{2r+1}},
\frac{p}{2^r} + \frac{1}{2^{2r+1}}] \ \cap\ [0,1] \] If $x$ is not in the
set $F_r$, the algorithm $E_r$ will determine the first $r$ binary places of
$x$.
\end{proof}

\begin{cor}\label{nreschedulepp}
There are programs $P_k$ (for integer $k \geq 1$), with specified waiting
times, so that the following is true: There is a measure one set in $[0,1]$
so that, for all $\mu$ in this set, there is a $k$ so that, for all $n \geq
0$, $P_k$ will successfully determine the first $n$ binary digits of $\mu$.
\end{cor}

\begin{proof}
To find the first $n$ places of $\mu$, $P_k$ uses the algorithm $E_{n+k}$ as
described in
Proposition \ref{procedures}.
The set where $P_k$ may fail to find the first $n$ places is $F_{n+k}$, so
the set where  $P_k$ may fail for some $n$ is $\cup_{n \geq 1}\ F_{n+k}$,
and this set has measure $\le 2^{-k}$.
The set where, for all $k$, there is an $n$ so that $P_k$ may fail for that
$n$ is the measure zero set $\cap_{k\ge 1}\cup_{n\ge 1} F_{n+k}$.
\end{proof}

To emphasise the result of Corollary \ref{nreschedulepp}, if we choose $\mu\in[0,1]$
at random (with a uniform probability distribution), then $\mu$ will be measurable with probability one.

The following result is rather trivial, but it is important to point it out.

\begin{propos}\label{nreschedule}
There are programs $N_k$ (with integer $k \geq 1$), with specified waiting
times (say $T_k$), so that the following is true: For any non-dyadic $\mu
\in [0,1]$ and any $n \geq 0$, there is a $k$ so that program $N_k$ will
find the first $n$ binary places of $\mu$.
\end{propos}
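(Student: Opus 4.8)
The plan is to run the bisection procedure $Bisection(T)$ described above with a fixed, $k$‑indexed schedule $T_k$, and to exploit the one feature that separates a non-dyadic $\mu$ from the bad dyadic case: every test mass $m$ proposed by bisection is a dyadic rational, so $m\neq\mu$, and by the upper bound $t_{exp}\le B/|m-\mu|$ of Inequality~\ref{ctimesim} each individual collision returns a genuine verdict ($\mu>m$ or $\mu<m$) in \emph{finite} time. The runs‑forever obstruction of Proposition~\ref{massundecidability}, where $m=\mu$, simply cannot arise. Thus the only thing that can spoil a run is an artificial time‑out, and I would kill those by making the schedule large enough.

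Concretely, I would fix once and for all a family of schedules $T_k$, independent of $\mu$, that is increasing and unbounded in $k$ --- for instance the constant waiting time $T_k(\ell)=k$ for every query word length $\ell$ (any schedule with $\sup_k T_k(\ell)=\infty$ will do) --- and let $N_k$ be the program that, on input $n$, executes $Bisection(T_k)$, returning its output $m$ if no query has timed out and failing through the time‑out branch otherwise. Now fix a non-dyadic $\mu\in[0,1]$ and a target $n$. First I would consider the ``ideal'' run of bisection on $\mu$ in which no query ever times out: it performs $n$ collisions at determined dyadic midpoints $m_1,\dots,m_n$, and I set $d_i=|m_i-\mu|>0$. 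Since each $d_i>0$, the quantity $T^{\ast}=\max_{1\le i\le n}B/d_i$ is finite, and I would then choose $k$ with $T_k\ge T^{\ast}$, i.e.\ $k=\lceil T^{\ast}\rceil$. The claim to verify, by induction on the stage $i$, is that $N_k$ reproduces the ideal run: if the first $i-1$ bits already read agree with the true binary expansion of $\mu$, then the current interval $(m_1,m_2)$ contains $\mu$, its midpoint is exactly $m_i$, and because $T_k\ge B/d_i\ge t_{exp}(m_i)$ the collision returns the correct comparison within the allotted time instead of timing out; the recorded bit is $1$ when $m_i<\mu$ and $0$ when $m_i>\mu$, matching the expansion. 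Hence $N_k$ never times out, completes all $n$ stages, and prints the first $n$ binary places of $\mu$.

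The one point requiring care --- and the reason the statement is ``rather trivial'' rather than vacuous --- is that the path taken by $Bisection(T_k)$ is coupled to the schedule through its time‑out branch, so a priori the midpoints it visits could differ from the ideal $m_1,\dots,m_n$; this is exactly what the choice $T_k\ge T^{\ast}$ eliminates, since once no time‑out occurs on the first $n$ stages the visited midpoints depend on $\mu$ alone. I would also stress that the family $\{(N_k,T_k)\}$ must be specified in advance, independently of $\mu$, which is why I fix the unbounded schedule family first and only afterwards read off $k$. This result is genuinely weaker than Corollary~\ref{nreschedulepp}: here $k$ is allowed to depend on both $\mu$ and $n$, and for a $\mu$ whose successive bisection midpoints approach it extremely fast (a Liouville‑type phenomenon) no single $T_k$ can handle all $n$ at once, so there is no uniform $k$ as in the measure‑one statement. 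In exchange we cover \emph{every} non-dyadic $\mu$ with no exceptional set, because finiteness of each $d_i$ is automatic the moment $\mu$ is irrational or a non-dyadic rational.
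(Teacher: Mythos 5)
Your proposal is correct and follows essentially the same route as the paper: $N_k$ runs the bisection procedure with waiting time $T_k$, and non-dyadicity of $\mu$ guarantees that every dyadic midpoint lies at positive distance from $\mu$, so some finite waiting time (hence some $k$) suffices for the first $n$ stages. You merely make explicit the details the paper leaves implicit (the choice $T^\ast=\max_i B/d_i$ and the induction showing the timed run reproduces the ideal one), which is a faithful elaboration rather than a different argument.
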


\begin{proof}
The program $N_k$ runs the bisection procedure with a waiting time of $T_k$
for each experiment. As long as $\mu \in [0,1]$ is not dyadic, there is some
accuracy of measurement which will determine its first $n$ binary places.
\end{proof}

It is important to note that Propositions \ref{nreschedulepp} and \ref{nreschedule}
prove the existence of programs to find the first $n$ places; there is no
idea of being able find in advance which programs.  Note, too, that the
infinite sequence $T_1$, $T_2$, ..., $T_k$, ..., where $T_k$ is the time
needed to find experimentally the $k$-bit of $\mu$ is not recursively
enumerable. The Proposition \ref{nreschedule} can be read in the following
way: {\em if we know the unknown mass $\mu$ in advance --- as an oracle ---
then we can define a schedule of computation times for each bit of the
mass}.

Now we consider numbers which are easy to find using the bisection method.
Recall that a real number is algebraic of order $k$ if it is a root of a
polynomial of order $k$ with integer coefficients. The following result, due
to Liouville, is well known:

\begin{propos}\label{algebraic}
If $x$ is an algebraic number of order $k$ then, for all non-zero integers
$a$ and $b$ such that $x \neq  a/b$, there is a computable number $R(x) > 0$
so that
$$\Big| x-\frac{a}{b} \Big| \, \geq \, \frac{R(x)}{b^k} \ .$$
\end{propos}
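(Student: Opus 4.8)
The plan is to run the classical Liouville argument, taking care that the resulting constant is computable. First I would fix a polynomial $P(t) = c_k t^k + \cdots + c_1 t + c_0$ with integer coefficients of degree exactly $k$ having $x$ as a root; since $x$ is algebraic of order $k$ we may take $P$ to be the primitive integer multiple of its minimal polynomial, so that $P$ is irreducible over $\Q$ and (for $k \geq 2$) has no rational root at all.

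Next I would clear denominators in $P(a/b)$. Writing
$$P\!\left(\frac{a}{b}\right) = \frac{c_k a^k + c_{k-1}a^{k-1}b + \cdots + c_0 b^k}{b^k},$$
the numerator is an integer; and because $a/b \neq x$ while $P$ has no rational roots when $k \geq 2$, this numerator is nonzero, so $\left|P(a/b)\right| \geq 1/b^k$. The case $k=1$ is immediate and handled separately: then $x = p/q$ is rational, and $|x - a/b| = |pb - qa|/(qb) \geq 1/(qb)$, so $R(x) = 1/q$ works.

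Then I would connect $P(a/b)$ to $|x - a/b|$ through the Mean Value Theorem. Since $P(x) = 0$, there is a $\xi$ between $x$ and $a/b$ with $P(a/b) = P(a/b) - P(x) = P'(\xi)(a/b - x)$, whence
$$\left| x - \frac{a}{b} \right| = \frac{|P(a/b)|}{|P'(\xi)|} \geq \frac{1}{|P'(\xi)|\, b^k}.$$
It then remains to bound $|P'(\xi)|$ from above uniformly. If $|x - a/b| \geq 1$ the desired inequality holds trivially for any $R(x) \leq 1$ (as $b^k \geq 1$); otherwise $\xi$ lies in $[x-1,\,x+1]$, on which $P'$ is bounded by $M = \max_{|t-x| \leq 1}|P'(t)|$, and so $|x - a/b| \geq 1/(M b^k)$.

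Finally, setting $R(x) = \min(1,\,1/M)$ gives the claim. The step I expect to require the most care is the computability of $R(x)$, since $M$ must be produced without exact knowledge of $x$: here I would use $|P'(t)| \leq \sum_{i=1}^k i|c_i|\,|t|^{i-1}$ together with a computable Cauchy-type bound on $|x|$ in terms of the $c_i$, from which $M$, and hence $R(x)$, is computable from the integer coefficients of $P$. The only genuinely delicate conceptual point is the nonvanishing of the integer numerator, which rests on irreducibility of the minimal polynomial and the separate treatment of $k = 1$; everything else is a routine estimate.
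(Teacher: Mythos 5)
Your proof is correct: it is the classical Liouville argument (clear denominators to get an integer numerator of absolute value at least $1$, apply the Mean Value Theorem, and bound $|P'|$ on a neighbourhood of $x$), and you rightly flag the two points needing care, namely the nonvanishing of the numerator via irreducibility (with $k=1$ treated separately) and the computability of $M$ from the integer coefficients. The paper itself gives no proof, simply citing the result as Liouville's well-known theorem, so your argument is precisely the standard one the authors are implicitly invoking.
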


\begin{propos}
If $\mu \in [0,1]$ is an algebraic number, and not a dyadic rational, then
there is a procedure to find $\mu$ so that the time schedule $T(n)$, to find
the $n$-th bit of $\mu$, is $\alpha\, n\, 2^{k n}$, for some computable constant
$\alpha$.
\end{propos}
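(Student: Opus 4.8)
The plan is to combine the Liouville lower bound from Proposition \ref{algebraic} with the fundamental timing constraint of the CME (Equation \ref{cdddvsducv}, $A\cdot 2^n \le t_{exp}$) to produce an \emph{explicit, computable} schedule for the bisection method. The key observation is that for an algebraic $\mu$ of order $k$, the test masses $m$ used by the bisection method are dyadic rationals of the form $a/2^i$, so Liouville's inequality directly bounds how close $m$ can come to $\mu$ at each stage, and hence bounds the experimental time from above.

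First I would run the standard bisection procedure $Bisection(T)$ to read bits of $\mu$. At the stage where we test the $i$-th binary place, the candidate mass is a dyadic rational $m=a/2^i$. Since $\mu$ is algebraic of order $k$ and not dyadic, $\mu\neq a/2^i$, so Proposition \ref{algebraic} applied with $b=2^i$ gives
\begin{equation*}
|m-\mu| \;=\; \Big|\frac{a}{2^i}-\mu\Big| \;\geq\; \frac{R(\mu)}{(2^i)^k} \;=\; \frac{R(\mu)}{2^{ki}}.
\end{equation*}
Next I would feed this into the timing law \ref{ctimesim}: the experiment at stage $i$ completes in time at most $B/|m-\mu| \le (B/R(\mu))\,2^{ki}$. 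Thus setting the waiting time for reading the $i$-th bit to $T(i)=\alpha\,2^{ki}$, with $\alpha$ chosen as a computable constant absorbing $B/R(\mu)$ (and $R(\mu)$ is computable by the statement of Proposition \ref{algebraic}), guarantees that \emph{every} experiment up to stage $i$ terminates within its allotted time, so no query times out. To find the $n$-th bit we need to perform the experiments at stages $1,\dots,n$, and summing their waiting times gives a total of order $\sum_{i=1}^n \alpha\,2^{ki} = O(\alpha\,2^{kn})$, which matches the claimed schedule $T(n)=\alpha\,n\,2^{kn}$ (the factor of $n$ is a comfortable overestimate absorbing the geometric sum and bookkeeping).

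The main obstacle is purely one of bookkeeping rather than depth: one must verify that the constant $\alpha$ really is computable. This hinges on the fact that Proposition \ref{algebraic} furnishes a \emph{computable} lower bound $R(\mu)>0$ --- for an explicitly given algebraic number of known order $k$, the standard effective form of Liouville's theorem yields such a computable $R(\mu)$ from the defining integer polynomial, so $\alpha = B/R(\mu)$ is computable and the schedule $T(n)=\alpha\,n\,2^{kn}$ is a genuine computable (indeed time constructible, after minor adjustment) function. The remaining point to check is that the non-dyadic hypothesis is exactly what is needed: it ensures $m\neq\mu$ at every stage, so Liouville applies and no experiment runs forever. This establishes that $\mu$ is measurable in the sense of Definition \ref{measurable} with the stated schedule.
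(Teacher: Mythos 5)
Your proof is correct and follows essentially the same route as the paper's (much terser) argument: run the bisection method, apply the Liouville bound of Proposition \ref{algebraic} with $b=2^i$ to the dyadic test masses to get $|m-\mu|\ge R(\mu)/2^{ki}$, and conclude that each of the $O(n)$ experiments fits within a computable waiting time of order $2^{kn}$, giving the schedule $\alpha\,n\,2^{kn}$. Your added care about the computability of $R(\mu)$ and the role of the non-dyadic hypothesis is a faithful elaboration of what the paper leaves implicit.
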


\begin{proof}
Using the bisection procedure we need $O(n)$ experiments, each taking time
proportional to $2^{k n}$.
\end{proof}

\subsection{A characterisation of measurable masses}
We can characterise masses that can be measured quite precisely: Non-dyadic masses $\mu\in[0,1]$ can be written in the pattern form, where $u_k$ gives the number of digits in the $k$th group:
\begin{eqnarray}\label{pattern}
\mu = 0 \! \cdot \! \underbrace{1 \dots 1}_{u_1} \underbrace{0 \dots 0}_{u_2} \underbrace{1 \dots 1}_{u_3} \underbrace{0 \dots 0}_{u_4} \underbrace{1 \dots 1}_{u_5} \underbrace{0 \dots 0}_{u_6} \dots \quad
\mathrm{where}\ 
u_1\ge 0,\ u_i\ge 1\  (i\ge 2)\ .
\end{eqnarray}

\begin{propos}\label{nonmeasurability} For the CME with unknown mass $\mu$ (not a dyadic rational), written
according to the pattern (\ref{pattern}):

\noindent (1)\quad 
If $\mu$ is measurable by any program, then the sequence $u_k$ is bounded by a computable function.

\noindent (2)\quad 
If the sequence $u_k$ is bounded by a computable function, then $\mu$ is measurable by
the bisection method. 
\end{propos}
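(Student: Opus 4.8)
The plan is to read the block lengths $u_k$ in the pattern (\ref{pattern}) as a measure of how close $\mu$ lies to simple dyadic rationals, and then translate that proximity, through the timing law $T(|m|)\ge K/|m-\mu|$, into the running time any procedure must pay. Write $s_k=u_1+\dots+u_k$ for the bit position at the end of the $k$th block and $d=0.b_1\cdots b_{s_k}$ for the dyadic rational cut off there, so that $\mu\in(d,d+2^{-s_k})$. The key observation is that, because the $(k+1)$st block is a maximal run of the opposite bit of length $u_{k+1}$, the number $\mu$ sits within $\approx 2^{-s_{k+1}}=2^{-(s_k+u_{k+1})}$ of one endpoint of its own cell: below $d+2^{-s_k}$... rather, just above $d$ when block $k$ is a block of $1$s, and just below $d+2^{-s_k}$ when it is a block of $0$s. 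Thus a long block $u_{k+1}$ forces $\mu$ to be extremely close to a low-complexity dyadic, which is exactly the obstruction flagged after Definition \ref{measurable}. Both halves of the proposition will follow by quantifying this.

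For (1) I would argue directly. Fix a machine $M$ with computable schedule $T$ measuring $\mu$; since no query times out, every oracle answer is a genuine comparison $\mu>m$ or $\mu<m$, so after any run the set of masses consistent with the answers is an interval $(\ell,u)\ni\mu$. A measuring procedure may commit to the first $s_k$ bits only once they are forced, i.e.\ once $(\ell,u)\subseteq[d,d+2^{-s_k}]$, so either $\ell\ge d$ (block $k$ of $1$s) or $u\le d+2^{-s_k}$ (block $k$ of $0$s). In either case some queried mass $m$ lies on the near side of $\mu$ with $|m-\mu|\le\mu-d\approx 2^{-s_{k+1}}$, and by the timing law that single experiment costs at least $\approx K\,2^{s_{k+1}}$ steps. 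Measurability bounds the whole computation that reads $s_k$ bits by $T(s_k)$, whence $K\,2^{s_{k+1}}<T(s_k)$ and therefore $s_{k+1}\le H(s_k)$ with $H(x)=\lceil\log_2(T(x)/K)\rceil$ computable. Iterating, $s_k\le H^{(k-1)}(s_1)$, a computable function of $k$ since $s_1$ is a fixed constant, and hence $u_k\le s_k$ is bounded by a computable function.

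For (2) assume $u_k\le g(k)$ with $g$ computable; then $s_k\le G(k):=\sum_{j\le k}g(j)$ is computable and increasing, and I run the bisection procedure. At the stage whose test mass $m$ has $p=|m|$ bits, $p$ lies in some block $k$ with $s_{k-1}<p\le s_k$, and the same estimate gives the worst-case separation $|m-\mu|\ge 2^{-s_{k+1}}$; since $s_{k-1}\ge k-2$ forces $k\le p+1$, this yields $|m-\mu|\ge 2^{-G(p+2)}$. Consequently the schedule $T(p):=\lceil K\,2^{G(p+2)+1}\rceil$, which is computable, is generous enough that no query ever times out, so every comparison is genuine, bisection converges correctly to the non-dyadic $\mu$, and the total time to read $n$ bits is at most $n\,T(n)$, a computable bound. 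Hence $\mu$ is measurable by the bisection method.

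The hard part is the soundness step in (1): justifying that a procedure which correctly prints the first $s_k$ bits of $\mu$ has actually \emph{forced} them, i.e.\ that $(\ell,u)\subseteq[d,d+2^{-s_k}]$. Read purely extensionally, Definition \ref{measurable} only asks that $M$ print $\mu$'s bits, and a fixed finite $M$ can hard-wire only boundedly much information; the clean way to close this gap is an indistinguishability argument — any $\mu'\in(\ell,u)$ induces an identical run, so if $(\ell,u)$ straddled a dyadic the very same output bits would be wrong for a consistent $\mu'$ — combined with the fact that $M$ is a single machine required to work at \emph{every} precision $n$. Making this rigorous, and pinning down the constants in the $2^{-s_{k+1}}$ estimate (including the $u_1=0$ start and the two block-type cases), is where the real work sits; granting it, the iteration in (1) and the whole of (2) are routine.
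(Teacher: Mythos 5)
Your proof is correct and follows essentially the same route as the paper's: both parts hinge on the observation that certifying the first $a_k=s_k$ digits forces a query within roughly $2^{-a_{k+1}}$ of $\mu$, hence an experiment of duration at least $K\,2^{a_{k+1}}$, which is then played off against the schedule $T(a_k)$ in one direction and used to build a computable schedule for bisection in the other. The soundness step you flag as ``the hard part'' is precisely the point the paper asserts without argument (``any program must have successfully run the experiment with test masses in the intervals $[\mu^-,\mu)$ and $(\mu,\mu^+]$''), and your explicit iteration $s_k\le H^{(k-1)}(s_1)$ (with $H$ replaced by a monotone majorant if necessary) actually makes the derivation of a computable bound on $u_k$ more complete than the paper's.
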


\begin{proof}
First note that the digit at the end of the block labelled by $u_k$ is in the $a_k$-th position, where $a_k=u_1+\dots+u_k$.
To make the method obvious we use an example with $u_1=3$, $u_2=2$, $u_3=4$, $u_4=3$, etc.
\begin{eqnarray*}
\mu=0 \! \cdot \! 11100111100011\dots
\end{eqnarray*}
To determine all digits up to the $a_3=9$th digit any program must have successfully run the experiment with test masses
in the intervals $[\mu^-,\mu)$ and $(\mu,\mu^+]$, where $\mu^\pm$ are the $a_3$ digit dyadic rationals,
differing only in the last position
\begin{eqnarray*}
\mu^-=0 \! \cdot \! 111001110\ ,\quad \mu^+=0 \! \cdot \! 111001111\ .
\end{eqnarray*}
Then we have the inequalities
\begin{eqnarray*}
2^{-a_3} \le |\mu-\mu^-| \le 2^{1-a_3}\ ,\quad 2^{-a_4-1} \le |\mu-\mu^+| \le 2^{-a_4}\ .
\end{eqnarray*}
(1)\quad For the first $a_k$ digits of $\mu$ to be determined, we must perform at least one experiment
of duration at least $2^{a_{k+1}}\, K$, where $K$ is a constant. If $\mu$ is measurable, there must
be a computable function $T$ so that $T(a_k)\ge 2^{a_{k+1}}\,K$, and from this we determine
the following formula, from which a computable bound for the sequence $u_k$ can be derived:
\begin{eqnarray*}
2^{u_{k+1}}\ \le \ 2^{-a_k}\, T(a_k)/K\ .
\end{eqnarray*}
(2)\quad Note that in the previous discussion, the numbers $\mu^\pm$ are the last two numbers queried in the binomial method for finding the first $a_3$ digits of $\mu$. In general to find the first $a_k$ digits of $\mu$ by the binomial method, we need $a_k$ experiments, each of duration at most $2^{a_{k+1}+1}\, K$. If the sequence $u_n$ is computable, this gives a computable schedule for finding $\mu$. 
\end{proof}

\begin{cor}\label{bisectionuniversal}
The Turing machine equipped with the bisection algorithm is a universal measuring procedure
(see Definition \ref{univ55}). 
\end{cor}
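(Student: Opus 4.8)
The plan is to obtain Corollary \ref{bisectionuniversal} directly from the two halves of Proposition \ref{nonmeasurability}, so that essentially no new machinery is needed beyond unwinding Definition \ref{univ55}. Recall that to qualify as a universal measuring procedure the fixed bisection machine $M$ must, for \emph{each} measurable mass $\mu$, admit \emph{some} computable schedule $T$ under which $M$ equipped with $T$ measures $\mu$; crucially, only $T$ is allowed to depend on $\mu$, while $M$ stays fixed as the machine running $Bisection(T)$. So the goal reduces to the following: given that $\mu$ is measurable by \emph{some} program, produce a \emph{computable} schedule $T$ with which bisection reads every bit of $\mu$ without ever timing out.

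First I would dispose of the dyadic case. If $\mu$ is a dyadic rational then, as observed before Definition \ref{measurable}, the run with test mass $m=\mu$ never returns and $\mu$ remains trapped in a dyadic interval, so only a fixed finite number of bits can ever be determined; hence no dyadic mass is measurable. Consequently any measurable $\mu$ is non-dyadic and admits the pattern form (\ref{pattern}) with block lengths $u_k$, which is exactly the hypothesis under which Proposition \ref{nonmeasurability} is stated. Next I would apply Proposition \ref{nonmeasurability}(1): since $\mu$ is measurable by some program, its block sequence $u_k$ is bounded above by a computable function $g$. I would then feed this into Proposition \ref{nonmeasurability}(2): a computable bound on $u_k$ guarantees that $\mu$ is measurable by the bisection method, and the construction in the proof of (2) exhibits the required schedule. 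Taking $M$ to be the bisection machine and $T$ that schedule witnesses Definition \ref{univ55}, and since $\mu$ was an arbitrary measurable mass the corollary follows.

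The one point deserving care — and the only real obstacle — is checking that the schedule extracted from part (2) is genuinely computable when one is handed merely a computable \emph{bound} $g$ on the $u_k$ rather than the sequence $u_k$ itself. Here I would invoke the estimate in the proof of Proposition \ref{nonmeasurability}(2): to read the first $a_k$ digits the bisection method uses $a_k$ experiments, each of duration at most $2^{a_{k+1}+1}K$, where $a_{k+1}=u_1+\dots+u_{k+1}$ is the block boundary just past the bits being read. The bound $u_i\le g(i)$ yields, by summation, a computable upper bound $A(n)$ for the boundary $a_{k+1}$ of the block containing bit $n$ (using $a_k\ge k-1$, which forces $k\le n$ and hence $a_{k+1}\le\sum_{i=1}^{n+1}g(i)$). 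I would then simply define the waiting time generously as $T(n)=2^{A(n)+1}K$, which is computable and dominates every waiting time the method actually requires. This overshoots the true boundaries but never times out, so bisection with this $T$ reads all of $\mu$; and the very same fixed machine serves every measurable $\mu$, with only $T$ varying. This closes the argument.
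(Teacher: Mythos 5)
Your argument is correct and follows essentially the route the paper intends: the corollary is obtained by chaining Proposition \ref{nonmeasurability}(1) and (2), so that any mass measurable by some program has computably bounded block lengths $u_k$ and is therefore measurable by bisection. The paper leaves this chaining implicit and offers no proof; your two additional checks --- that dyadic rationals are excluded from measurability so the pattern form (\ref{pattern}) applies, and that a computable \emph{bound} on the $u_k$ (rather than the sequence itself) already yields a computable, never-timing-out schedule by summing the bound --- are exactly the details needed to make the paper's one-line deduction rigorous.
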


\subsection{Results on non-measurable masses}
Now let us consider further numbers which are difficult to find using our
method.
For convenience, we recall the specification of a non-computable function 
$\mathrm{beaver}: \mathbb{N} \to \mathbb{N}$ called the \textit{busy beaver function} (cf. \cite{cooper:04}). 

\begin{defin}\label{dominates}
Let $f, \ g: \mathbb{N} \to \mathbb{N}$ be total functions. We say that $g$ {\em dominates} $f$ if there exists a natural number $p$, called an order, such that, for all $n$ such that $n > p$, we have $g(n) > f(n)$. If $\mathcal{F}$ is a set of such functions, we say that $g$ {\em dominates} $\mathcal{F}$ if $g$ dominates $f$, for all $f \in \mathcal{F}$.
\end{defin}

\begin{defin}\label{beaver}
Let $beaver: \mathbb{N} \to \mathbb{N}$ be the total function defined by: $beaver(0) = 0$ by convention; $beaver(n)$ is the maximum output for input $0$ among all Turing machines with $n$ states that halt on input $0$.
\end{defin}

The $beaver$ is a totally defined function because for all $n$ there exists a Turing machine with $n$ states that halts and, consequently, can produce a string of $1$s on the output tape that can be interpreted in unary or  binary, according to convention. 

This function is due to Tibor Rad\'o \cite{rado:62} and was one of the first well-defined non-computable total functions. Unsurprisingly, the function is complicated and the growth of the function is considered an open problem \cite{brady:94}. 

\begin{theorem}\label{beaverdominates}
The function $beaver$ dominates all total computable functions.
\end{theorem}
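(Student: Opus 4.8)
The plan is to show that $beaver$ eventually exceeds every total computable function $f$. The strategy is a standard diagonalization-style argument exploiting the fact that $beaver$ captures the maximal halting behaviour of $n$-state machines, combined with the closure of the computable functions under composition. First I would fix an arbitrary total computable $f$. Since we want domination, I may freely replace $f$ by a larger, monotone-nondecreasing computable majorant $F$ satisfying $F(n)\ge f(n)$ and $F(n)\ge n$ for all $n$; domination of $F$ immediately yields domination of $f$, so this loses no generality while making the bookkeeping cleaner.

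The key step is to encode a fixed computation into a machine whose state count I can control. For each $k$, consider the computable function $g(k)=F(k)+k$ (or some similar explicit computable combination that grows at least as fast as $F$). Because $g$ is total computable, there is a Turing machine that, on input $0$, writes $k$ in unary, computes $g(k)$, and outputs a string of that many $1$s and halts. The crucial quantitative point is that such a machine can be built with a number of states of the form $c\cdot k$ or even $O(\log k)$ depending on how $k$ is hard-wired; the precise constant is not important, only that the state count is some fixed computable (indeed, elementary) function $h(k)$ of $k$, and that $h$ grows strictly more slowly than $g$ itself. By the very definition of $beaver$, since this machine has $h(k)$ states, halts on $0$, and outputs $g(k)$ ones, we obtain
\begin{equation*}
beaver(h(k)) \ \ge \ g(k) \ = \ F(k)+k \ > \ F(h(k))
\end{equation*}
for all sufficiently large $k$, where the last inequality uses that $F$ is nondecreasing and $h(k)$ is eventually smaller than $k$ relative to the gap supplied by the extra $+k$ term. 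Setting $n=h(k)$ and noting that $n$ ranges over an infinite set then gives $beaver(n)>F(n)\ge f(n)$ for infinitely many $n$.

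To upgrade this from infinitely many $n$ to all sufficiently large $n$ — which is what domination in Definition \ref{dominates} requires — I would use the monotonicity of $beaver$: adding states can only increase (or preserve) the achievable maximal output, so $beaver$ is nondecreasing. Combining the inequality at the points $n=h(k)$ with monotonicity of both $beaver$ and $F$ fills in all intermediate values, yielding $beaver(n)>f(n)$ for all $n$ past some threshold $p$. The main obstacle is the quantitative state-counting estimate: one must be careful that the number of states $h(k)$ needed to hard-code $k$ and compute $g(k)$ genuinely grows slowly enough that the inequality $beaver(h(k))>F(h(k))$ survives the substitution. This is where the argument must be handled honestly rather than waved through, since a naive encoding could let the state count grow as fast as $F$ itself and destroy the gap; the resolution is to absorb almost all of the work into the computation of $g$ rather than into the representation of $k$, so that $h$ remains elementary while $g$ can be taken as large as any prescribed computable function.
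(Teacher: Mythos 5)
First, a point of comparison: the paper itself offers no proof of this statement --- it is Rad\'o's classical theorem and is simply cited --- so there is no ``paper's argument'' to measure yours against; what follows is an assessment of your proof on its own terms.

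Your overall strategy is the standard one and is essentially sound: majorise $f$ by a nondecreasing computable $F$ with $F(n)\ge n$, build for each $k$ a machine with few states that halts on blank input and outputs at least $g(k)=F(k)+k$ ones, invoke the definition of $\mathrm{beaver}$, and use monotonicity of $\mathrm{beaver}$ (append a dummy state) to pass from a cofinal set of $n$ to all sufficiently large $n$. However, one step is wrong as stated, and it is precisely the step you flagged as the one that must not be waved through. You assert that it suffices for the state count $h(k)$ to ``grow strictly more slowly than $g$ itself.'' It does not: what the inequality $\mathrm{beaver}(h(k))\ge g(k)>F(h(k))$ actually needs is $F(h(k))\le F(k)$, i.e.\ (since $F$ is nondecreasing) $h(k)\le k$ eventually. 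If $h(k)=c\,k$ with $c>1$, or even $h(k)=k+1$, the chain breaks: take $F(n)=2^n$; then $h$ grows far more slowly than $g$, yet $F(h(k))\ge 2^{k+1}$ dwarfs $g(k)=2^k+k$. The additive $+k$ in $g$ cannot compensate for \emph{any} increase in the argument of $F$, because $F$ may grow arbitrarily fast; it only buys strictness in $F(k)+k>F(k)\ge F(h(k))$ once $h(k)\le k$ is secured. So you must commit to an encoding with $h(k)\le k$ for large $k$ --- the $O(\log k)$-state hard-coding of the binary expansion of $k$ followed by a constant-size binary-to-unary converter achieves this --- or else use the classical variant that sidesteps the issue: with the naive $h(k)=k+c$ encoding, have the machine output $F(F(k))$ instead of $F(k)+k$, and use $F(k)\ge k+c$ (for $k\ge c$, say) to conclude $\mathrm{beaver}(k+c)\ge F(F(k))\ge F(k+c)\ge f(k+c)$. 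With either repair the argument closes; the remaining steps (monotonicity of $\mathrm{beaver}$ and the interpolation to all large $n$) are fine.
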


Notice that for every total computable function $f$ there in an order $p_{f}$, depending upon the function $f$, such that from $p_{f}+1$ onward the busy beaver grows faster than $f$.

\begin{theorem}\label{nonmeasurable}
There are uncountably many values $\mu\in[0,1]$ which are not measurable by the CME.
\end{theorem}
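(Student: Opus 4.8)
The plan is to exhibit uncountably many masses $\mu$ whose pattern sequence $u_k$ from Equation (\ref{pattern}) grows too fast to be dominated by any total computable function, and then invoke part (1) of Proposition \ref{nonmeasurability}: if $\mu$ is measurable, then $u_k$ must be bounded by a computable function. The engine for ``too fast'' is the busy beaver function, which by Theorem \ref{beaverdominates} dominates every total computable function. The main idea is that a single fast-growing sequence gives one non-measurable mass, but to get \emph{uncountably many} I would build a whole Cantor set of such masses by leaving infinitely many binary choices free while still forcing the block lengths to outrun every computable bound.

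First I would define, for each infinite binary string $\sigma=\sigma_1\sigma_2\sigma_3\dots\in\{0,1\}^{\mathbb N}$, a mass $\mu_\sigma$ by specifying its pattern lengths. The cleanest approach is to reserve the \emph{odd-indexed} block lengths to encode unbounded growth and use the \emph{even-indexed} blocks to encode the free choices. Concretely, set $u_{2k-1}=beaver(k)$ (a run of $1$s or $0$s of busy-beaver length, alternating as the pattern form requires), and let $u_{2k}=1$ if $\sigma_k=0$ and $u_{2k}=2$ if $\sigma_k=1$; each choice still satisfies $u_i\ge 1$ for $i\ge 2$, so every $\mu_\sigma$ is a legitimate non-dyadic mass written in the form (\ref{pattern}). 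The subsequence $u_{2k-1}=beaver(k)$ already dominates every total computable function, so the full sequence $(u_k)$ cannot be bounded by any computable function. By part (1) of Proposition \ref{nonmeasurability}, no such $\mu_\sigma$ is measurable by any program.

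Second I would check that the map $\sigma\mapsto\mu_\sigma$ is injective, so that the uncountably many strings $\sigma$ yield uncountably many distinct masses. Distinct $\sigma$ first differ at some coordinate $k$, which forces a different even-block length $u_{2k}$ at a determined bit position, hence distinct binary expansions; since none of the $\mu_\sigma$ is dyadic, its binary expansion is unique and the masses are genuinely different. As $\{0,1\}^{\mathbb N}$ is uncountable and the map is injective, the set $\{\mu_\sigma\}$ is uncountable, completing the argument.

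\textbf{Expected obstacle.}
The delicate point is \emph{not} the growth argument --- that is handed to us by Theorem \ref{beaverdominates} --- but making sure the free even-blocks do not accidentally slow the growth or collapse the injectivity. The hard part will be organising the interleaving so that (i) the odd subsequence still witnesses non-computable growth of the \emph{whole} sequence $(u_k)$ in the precise sense demanded by Proposition \ref{nonmeasurability}(1), since that statement bounds the entire sequence rather than a subsequence, and (ii) distinct $\sigma$ provably give distinct non-dyadic expansions. One must confirm that inserting blocks of length $1$ or $2$ between the busy-beaver blocks cannot let some computable $T$ sneak in a bound; this is immediate because a computable bound on $(u_k)$ would in particular bound the odd terms $beaver(k)$, contradicting domination. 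Once that observation is in place, the remaining verifications are routine.
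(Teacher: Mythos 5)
Your proposal is correct and follows essentially the same route as the paper: the paper also builds a continuum of masses in the pattern form (\ref{pattern}) with block lengths paced by the busy beaver function (choosing $u_k\in\{\mathrm{beaver}(k),\mathrm{beaver}(k)+1\}$ rather than your interleaving of $\mathrm{beaver}(k)$-blocks with free blocks of length $1$ or $2$) and concludes non-measurability from Proposition \ref{nonmeasurability}(1) via Theorem \ref{beaverdominates}. Your explicit injectivity check is a small extra bit of rigor the paper leaves implicit, but the underlying argument is the same.
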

\begin{proof}
We take all $\mu$, defined by Pattern \ref{pattern}, made from all
possible choices of the following values of each $k$:
\begin{eqnarray*}
u_k \, = \, \left\{ \begin{array}{c} \mathrm{beaver}(k) \\
\mathrm{beaver}(k)+1\end{array} \right.
\end{eqnarray*}
Because of the choice, there are uncountably many such $\mu$. By Proposition \ref{nonmeasurability},
all of them are non-measurable. 
\end{proof}

Can we decide if a mass is measurable? To be more precise, we could imagine using a program on a Turing machine using the CME as oracle to decide this. However, we have the following negative result:

\begin{propos}\label{undecmeasurable}
There is no program running on a Turing machine using the CME as oracle which can decide in finite time if a mass is measurable.
\end{propos}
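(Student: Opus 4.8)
The plan is to derive a contradiction from the fact, recorded in Proposition \ref{nonmeasurability}, that measurability of a non-dyadic mass is governed solely by the asymptotic growth of its pattern lengths $u_k$, that is, by the tail of its binary expansion. A program that halts in finite time receives only finitely many oracle answers, each of which constrains $\mu$ to a union of intervals; such data can never pin down the tail.

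Suppose $P$ is a Turing machine using CME($\mu$) as oracle that halts on every $\mu\in[0,1]$ and outputs correctly whether $\mu$ is measurable. Because $P$ is deterministic, its entire run on CME($\mu$) --- the query masses it generates, and hence its final output --- is a function of the transcript of oracle answers it receives. I claim this transcript, and therefore $P$'s output, is locally constant in $\mu$ away from finitely many points. Each query uses a dyadic test mass $m$, and the oracle's reply is dictated by $\mu$ through two threshold conditions: the comparison $\mu\lessgtr m$ (boundary at $\mu=m$) and, via the timing law $A/|m-\mu|\le t_{exp}\le B/|m-\mu|$ of Inequality \ref{ctimesim} measured against $P$'s schedule value $T(|m|)$, a return-versus-``time out'' verdict (boundary where $t_{exp}=T(|m|)$, a finite set of $\mu$-values). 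Thus for each query the reply is constant on the open intervals cut out by these finitely many boundary points, and since only finitely many queries occur before $P$ halts, $[0,1]$ is partitioned into finitely many open intervals (together with the boundary points) on each of which $P$ produces one and the same output.

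To finish, note that both measurable and non-measurable masses are \emph{dense} in $[0,1]$, indeed in every subinterval. Given any open interval $I$, choose a finite binary word $w$ long enough that every real beginning with $w$ lies in $I$: extending $w$ by a tail of unit blocks ($u_k=1$) yields a measurable mass in $I$ by Proposition \ref{nonmeasurability}(2), while extending $w$ by the busy-beaver tail $u_k\in\{\mathrm{beaver}(k),\,\mathrm{beaver}(k)+1\}$ of Theorem \ref{nonmeasurable} yields a non-measurable mass in $I$ by Proposition \ref{nonmeasurability}(1) together with Theorem \ref{beaverdominates}. Hence each interval of the partition above contains both a measurable and a non-measurable mass, so the single output $P$ assigns to that interval is necessarily wrong for one of them, contradicting the assumed correctness of $P$.

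I expect the main obstacle to be the local-constancy claim of the second paragraph, and specifically the control of the ``time out'' verdicts: one must verify that, for a fixed query mass, the set of $\mu$ causing a timeout is genuinely an interval with finitely many endpoints, and that the exactly-on-threshold coincidences form only the negligible boundary of the partition rather than corrupting whole subintervals. Once the oracle replies are established to be threshold conditions in $\mu$, the partition of $[0,1]$ into finitely many intervals, and hence the contradiction with the density of both mass classes, follows directly.
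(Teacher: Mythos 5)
Your proof is correct and takes essentially the same route as the paper's: a halting run confines $\mu$ to an open interval on which the program's output is constant, and that interval contains both measurable masses (tails of bounded blocks) and non-measurable ones (busy-beaver tails), so the single answer must be wrong for one of them. Your threshold analysis of the oracle replies merely makes explicit the local-constancy step the paper leaves implicit; the only caveat is that your global partition of $[0,1]$ into \emph{finitely} many intervals is stronger than needed and not quite justified (the query sequence, hence the set of boundary points, varies with $\mu$), but local constancy on a neighbourhood of a single generic point already yields the contradiction.
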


\begin{proof}
In the given amount of time the program runs for, the CME can only find the mass $\mu$ within a given open interval. 
Any mass within this open interval would produce the same result for the program. However this interval contains (after some initial segment of $\mu$) both masses with infinite endings 
of their decimal expansions of the form coded earlier (and therefore effectively measurable) and those with busy beaver endings (and therefore not effectively measurable).
\end{proof}

\section{An uncertainty principle in classical mechanics}\label{sec:uncertainty}
Using an algorithmic theory of measurement, we have shown that for an experiment in classical dynamics, what is measurable depends on:

\noindent 1.  \textit{Time}: To buy accuracy you have to pay with time, and the budget for time is controlled by schedules.

\noindent 2. \textit{Equipment}: The apparatus for the CME, and the physical theory that governs it, imposes its own limits on how much time is required for a given experiment. 

\noindent 3. \textit{Procedure}: For the CME there is a universal experimental procedure (the bisection method), which can obtain all the results that can be measured by all experimental procedures. 

The limitation on the apparatus in (2) can be phrased as a sort of uncertainty principle, using equation (\ref{time}) to make an inequality with $\Delta\mu=|m-\mu|$ being the uncertainty in mass, and
$\Delta t$ being the time necessary to perform an experiment, where $u$ is the input velocity and $r$ is the distance from the unknown mass to the flags:
\begin{eqnarray*}
\frac{\mathrm{lower\ bound\ on\ masses}\times r}{u} \, \le\, 
\Delta\mu \times \Delta t\, \le\, \frac{\mathrm{2\ upper\ bound\ on\ masses\times r}}{u}\ .
\end{eqnarray*}
$\Delta \mu \times \Delta t$ is a product of the type $\Delta E \times \Delta t$, where $E$ denotes {\em energy}, which is quite well known both in classical and 
 quantum physics (see \cite{hamming:89}), and it is not considered a purely quantum relation.

What happens when we combine this trade-off with the computable schedules of the experimental procedure, which bound the time? 
Do we find notions of the limits of measurement or quanta? Yes, in a way. Imagine a resource allocation problem that many readers will be familiar with, that of a research council buying time from a research group. With no schedule, the council keeps on paying for a result that will be delivered `eventually'. With a schedule, there is a table of what results will be delivered by various deadlines, and failure to meet a deadline means that the grant is terminated. 

As far as our CME measuring mass is concerned, we may as well assume for simplicity that we are running the bisection method, since we have shown that it is universal. 
We have shown that there are physical masses $\mu$ for which every possible computable schedule will eventually fail. 
A particular computable schedule will fail at these masses, and more. That failure will consist of not being able to determine the $n$th digit in time $T(n)$ (after determining the previous $n-1$ digits). But then, using that schedule, we would not be able to distinguish any masses between the mass given by the determined $n-1$ digits followed by 0, and the mass given by the $n-1$ digits followed by 1. Thus, for that mass, and for the amount of effort that society (in the form of the research council) is prepared to devote to finding it, there is an effective `quantum' or limit of observability. That limit can either be expressed as a limit on the number of places, or in terms of an interval. Of course, everything, in particular a notion of quantum, depends on  $\mu$ and $T$. It is quite possible that the experimental procedure will continue indefinitely, as the CME 
for that mass and schedule keeps meeting every deadline.

\section{Conclusions}\label{conclusions}

We are developing a methodology and mathematical theory to examine how data is represented and computed using physical systems. Our primary tasks are to study computation by (i) physical systems in isolation and (ii) physical systems combined with algorithms. Our objectives are foundational rather than technological. The main ideas and results of this paper play an influential role in our research programme. The methodological principles of Section \ref{Computation_and_physical_systems}, especially Principles 5 and 6, allow us to pursue questions of interest in computation, physics and philosophy. We believe the CME exemplifies technical ideas and properties that have very wide application.

In summary, the CME focusses attention on the idea that the communication between the Turing machine and an external physical device is complicated and that the concept of a \textit{protocol} is important and essential. In particular, physical oracles \textit{must} take time to consult that depend exponentially upon accuracy.  Applying Principle 5, we showed that Turing machines and protocols boost computational power beyond the Turing barrier (Section \ref{sec:collidercomplexity}). Applying the new Principle 6, we showed that Turing machines and protocols reduce the power of experiments to measure the classical continuum (Sections \ref{sec:measurability} and \ref{sec:uncertainty}). We will reflect on time, our earlier experiment SME, and uncertainty.

\subsection{Time and a conjecture}\label{conjec}

In his essays \cite{bachelard:40,bachelard:34}, Bachelard stresses the fact that accuracy in a measurement in Physics is related to time: more accurate measurements consume more time. In our setting, the protocol must cope with the time (a) to settle the parameters of the experimental equipment and (b) to accomplish the experiment to the desired accuracy. The Turing machine's query denotes the actual values of the parameters and the desired accuracy is given by the size of the query. Now, while (a) may depend upon the experimenter, (b) depends only on the physical theory $T$ specifying the experiment: the protocol's schedule is a measure of the time needed, or allowed, to perform the experiment and retrieve the result. In the collider experiment, the time needed for a collision experiment is exponential in the size of the Turing machine query. This is a new and remarkable fact about the protocol because it seems to be common; indeed, exponential time seems to be the norm.

The new features of the CME, which contrast with our earlier work on SME, led us to consider several standard experiments in Physics, measuring distance, inertial mass, resistance, temperature, the ratio $e/m$ of an elementary particle in the Coulombian field applying classic or quantum methods, and the Brewster angle in optics. In all these experiments we found that the {\em the time needed for an experiment is exponential in the size of the Turing machine query}. 
 
In Inequality \ref{ctimesim}, the lower bound for the experimental time of the CME can be considered as less controversial than the upper bound. In our fragment of physical theory, we have a somewhat idealised world and it is likely that trying to make the theory more ``realistic'' would increase the experimental time. In other words, the lower bound of Inequality \ref{ctimesim} is likely to remain, though adding ``realism'' might cast doubt on the upper bound. 

We shall make a conjecture about the behaviour of Turing machines using physical oracles, for which we plan to publish more evidence in due course. It is based on the idea that the lower bound to experimental time similar to that in Inequality \ref{ctimesim} is a universal feature of ``realistic'' theories. 

\begin{conjec}\label{bctconjecture} 
For ``realistic'' physical theories, using an experiment as a physical oracle requires an exponential time protocol.
\end{conjec}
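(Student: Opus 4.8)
The plan is to convert the informal conjecture into a precise lower-bound theorem by isolating the structural features of the CME that force the exponential blow-up and then axiomatising exactly those features as the meaning of ``realistic''. First I would formalise a \emph{physical oracle} for a subtheory $T$ as a device that, on a rational query parameter $z$, prepares and evolves a valid model of $T$ and returns one of finitely many discrete outcomes (for the CME: $\mu<z$, $\mu>z$, or \textit{time out}) as soon as the system enters a distinguished macroscopic region, such as crossing a flag. To each pair $(z,\mu)$ this associates a decision time $\tau_T(z,\mu)$, and the protocol's only access to the hidden quantity $\mu$ is through the outcome and timing data. The content of the conjecture is then the claim that $\tau_T$ must diverge as the query approaches $\mu$, at a rate that forces a schedule exponential in the accuracy.

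The central step is to propose the axioms encoding ``realistic'' and to extract from them a \emph{critical slowing-down lemma}. Three ingredients suffice to reproduce Equation~(\ref{time}): (A) the evolution depends continuously, indeed Lipschitz, on the query $z$; (B) a finite-speed or finite-energy bound, so that the distinguishing observable whose value triggers an outcome moves at bounded rate; and (C) a \emph{transversal threshold} at $z=\mu$, meaning the outcome category changes across $z=\mu$ while the rate driving the distinguishing observable vanishes continuously there. In dynamical-systems language (C) says the answer is decided by a transversal crossing of a codimension-one separatrix, with $\mu$ playing the role of a bifurcation parameter. Generalising the CME computation, the observable must traverse a fixed macroscopic distance driven by a rate that is $O(|z-\mu|^{\alpha})$ near the threshold by (B)--(C), so integrating the bounded speed yields constants $c>0$ and $\alpha\ge 1$ with
\[
\tau_T(z,\mu)\ \ge\ \frac{c}{|z-\mu|^{\alpha}}\qquad(z\to\mu),
\]
which for the CME recovers Equation~(\ref{cdddvsducv}) with $\alpha=1$.

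From here the argument is information-theoretic and essentially already present in Proposition~\ref{nonmeasurability}(1): to certify the $n$th bit of $\mu$ a protocol must separate $\mu$ from a dyadic boundary at distance $\le 2^{-n}$, so it must issue at least one query $z$ with $|z-\mu|\le 2^{-n}$; the slowing-down lemma then bounds that single experiment below by $c\,2^{\alpha n}$. Hence any schedule $T$ that measures $\mu$ to $n$ places satisfies $T(n)\ge c\,2^{\alpha n}$, i.e.\ the protocol is exponential in the query length. To promote the conjecture to a theorem I would instantiate axioms (A)--(C) for each experiment the authors list --- distance, inertial mass, resistance, temperature, the ratio $e/m$, and the Brewster angle --- verifying the transversal-threshold hypothesis case by case; this is precisely the ``more evidence'' the authors promise.

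The hard part will be the formalisation of ``realistic'' itself, where all the content resides and which is why the statement is a conjecture rather than a theorem. Axiom~(C) is doing the real work, and it is easy to state it so strongly that the theorem becomes vacuous, or so weakly that counterexamples intrude: a theory with genuinely unbounded signal speed, or one in which the binary outcome is \emph{structurally stable} and hence does \emph{not} slow down near $\mu$, would defeat the lemma. A secondary, adversarial obstacle is to rule out clever protocols that extract the $n$th bit \emph{without} ever querying within $2^{-n}$ of $\mu$. I would close this gap with an indistinguishability argument in the style of the paper's dyadic-mass discussion: for any protocol making only far queries, exhibit two admissible hidden values that agree on every returned outcome yet differ in the $n$th bit, so that no correct protocol can avoid the expensive near-threshold query.
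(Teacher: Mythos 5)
This statement is a \emph{conjecture}, and the paper deliberately gives no proof of it: the authors say only that they ``plan to publish more evidence in due course'' and that they ``have begun the refinement and formalisation of this conjecture.'' The support offered is the CME lower bound (Inequality \ref{ctimesim} and Proposition \ref{cexponentiallowerbounds}), an informal survey of standard experiments (distance, resistance, temperature, $e/m$, the Brewster angle), and the observation in Section \ref{cme&sme} that the sharp-wedge SME --- a prima facie counterexample, since its experiments take constant time --- acquires the exponential lower bound once the vertex is realistically rounded. Your proposal is therefore not comparable to a paper proof; it is a formalisation programme, and you candidly concede the crux yourself (``the hard part will be the formalisation of `realistic' itself, where all the content resides''). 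Judged as a programme it is well aligned with the authors' stated intentions, and your axiom (C) (transversal threshold with vanishing rate) is a reasonable axiomatisation of exactly the feature the authors invoke informally when rounding the wedge. But note the circularity risk concretely: once ``realistic'' is \emph{defined} as (A)--(C), the resulting theorem is essentially a restatement of the slowing-down bound you assumed, and the conjecture's entire empirical content migrates to the unproved meta-claim that physically realistic theories satisfy (C) --- which is precisely what remains open, so no proof has been achieved.

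There is also a genuine technical hole in the one step you do commit to, the indistinguishability argument forcing a query within $2^{-n}$ of $\mu$. Your formalisation grants the protocol access to ``outcome and timing data,'' and under the paper's own timing law $K/|\mu-z|\pm N$ an observable clock defeats the lower bound: a \emph{single distant} query $z$ returns after time roughly $K/|\mu-z|$, so reading the elapsed time determines $\mu$ to an accuracy limited only by the noise $N$ and clock granularity, with no near-threshold query at all --- and your two ``indistinguishable'' hidden values are then distinguished by their return times. The paper forecloses this channel by fiat in its protocol definition (Section \ref{cmprotocol}): with no time limit ``the Turing machine has no idea of the elapsed time,'' and with a timer the only temporal datum is the single threshold bit of whether the answer arrived within $T(|z|)$. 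Any honest formalisation of the conjecture must include an analogous axiom restricting timing information to threshold form (or a timing-uncertainty postulate strong enough to collapse the timing channel to one bit); without it your slowing-down lemma is not an obstruction but an efficient measurement mechanism. A minor further point: your per-bit bound ``one query with $|z-\mu|\le 2^{-n}$'' is correct for non-dyadic $\mu$, but the sharper cost structure is governed by the block lengths --- time of order $2^{a_{k+1}}$ to certify the first $a_k$ digits, as in Proposition \ref{nonmeasurability}(1) --- which is weaker than what you claim to recover but is exactly what generates the paper's non-measurable masses; your coarser bound suffices for exponentiality of the protocol, though not for the measurability dichotomy.
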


We have begun the refinement and formalisation of this conjecture and the exploration of its consequences as a general mathematical property. If the conjecture is true generally then the project of finding physical systems which allow us to measure some physical quantity ever more accurately \textit{and} ever more efficiently --- e.g., allowing us to halve the error without doubling the time taken --- is condemned to failure. 

This exponential lower bound limits the rate that the Turing machine can ``extract'' information from the physical oracle and so limits any computational power which may be added to the Turing machine as a result of being connected to the physical oracle.

\subsection{Comparing the CME and SME}\label{cme&sme}

The scatter machine experiment SME was introduced in \cite{beggstucker:07b} and studied as an example of experimental computation using the principles in Section \ref{Computation_and_physical_systems}. We showed that the experiment could measure or compute non-computable numbers in $[0, 1]$. Among its principal features are the bisection method and non-deterministic discontinuous physical behaviour. The theory of oracles began in \cite{beggsetal:08c} where we introduced the ideas of (a) infinite, unlimited and finite precision and (b) protocols. Again, the theme was computational power: we introduced Principle 5 and characterised the power using non-uniform complexity classes. However, there are some important differences in our results about the SME and CME.

In the scatter machine, the time needed to perform a single experiment is constant. The protocol for the scatter machine we used was polynomial but this is quite an arbitrary assumption to cover the time taken to set up the cannon to the desired accuracy, fire it, and observe the result: it assumes that setting up an experiment to a higher accuracy takes more time. However, once the cannon position is set, the experiment is concluded in a few seconds.

A second difference is in the computational power the two experiments as oracles. The results in \cite{beggsetal:08c,beggsetal:08f} and here are compared in the following table.

\begin{center}

\begin{tabular}{||c|c|c||} \hline
\multicolumn{3}{|c|}{{\sc Computational Power}} \\ \hline\hline
{\em Experimental oracle} & {\em precision} & {\em complexity class lower bound} \\ \hline
& infinite & $P/poly$ \\ \cline{2-3}
{\bf scatter machine experiment} & unbounded & $BPP//poly = P/poly$ \\ \cline{2-3}
& fixed & $BPP//log*$ \\ \hline
& infinite & $P/log*$ \\ \cline{2-3}
{\bf collider machine experiment} & unbounded & $P/log*$ \\ \cline{2-3}
& fixed & $BPP//log*$ \\ \hline\hline
\end{tabular}

\end{center}

The transition from the experiment SME to the experiment CME implies a loss of computational power. In the infinite precision and unbounded precision cases, the computational power, in polynomial time, falls from $P/poly$ to $P/log*$. However, in the finite precision case, where the protocols have no influence, the computational power, in polynomial time, is the same $BPP//log*$. More subtly, in the CME the computational power rises from the infinite and unbounded precision $P/log*$ to finite precision $BPP//log*$ because the protocol is exponential and this does not effect the stochastic nature of the finite precision calculations. In the SME the computational power falls from the infinite and unbounded precision $P/poly$ to finite precision $BPP//log*$ because the protocol is polynomial and this is a stronger condition.

Now the SME involves discontinuous behaviour in the speed of scattered particles as a function of the unknown position, which leads to non-determinism at the vertex of the wedge (at the vertex, the speed jumps from a finite non-zero value to the negative of that value) while the CME is continuous (the speed of a test particle falls to zero as its mass approaches the unknown mass from below and then increases again, as the mass deviates from the unknown mass from above, but with opposite sign).

Does the SME falsify the Conjecture \ref{bctconjecture}? The scatter machine has an upper bound which is not of the form in Inequality \ref{ctimesim} --- in fact we could take a constant upper bound. Recently, we have  revisited the SME and found that introducing more ``realistic'' assumptions into the theory used --- rounding the vertex, for example --- not only removes this constant upper bound, but institutes a lower bound of the form of Inequality \ref{ctimesim}. This extended SME has an exponential protocol and the same computational power as CME. See \cite{beggsetal:09g} for a full account including an extended comparison table. This leads us to conjecture:

\begin{conjec}\label{physicalcurchturingthesis}
For ``realistic'' physical theories, using an experiment as a physical oracle with an exponential time protocol boosts the power of Turing machines to $P/log*$ for infinite and unbounded precision and to $BPP//log*$ for fixed precision.
\end{conjec}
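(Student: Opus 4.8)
The plan is to reduce this conjecture to a small set of structural axioms that isolate the features of the CME actually used in Section~\ref{sec:collidercomplexity}, and then to prove matching lower and upper bounds from those axioms. The engine throughout is Conjecture~\ref{bctconjecture}: an exponential-time protocol means that in polynomial time a Turing machine can resolve the unknown parameter to only $O(\log n)$ binary places, since halving the error at least doubles the waiting time. I would abstract a ``realistic'' experiment as a \emph{threshold oracle} for an unknown real $\mu\in(0,1)$: each query is a dyadic $z$, the oracle returns a sign comparing $\mu$ with $z$ (in the fixed-precision case, possibly a third ``time out'' answer), and the time to answer obeys a lower bound of the form $K/|\mu-z|$. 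The first task is to make this axiomatisation precise enough to prove theorems yet broad enough to cover the experiments listed in Section~\ref{conjec}.

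For the lower bound I would generalise the coding of Section~\ref{code}. Given $A\in P/\log*$ with witness $B\in P$ and prefix advice $f\in\log*$, encode $f$ as the unknown parameter $\mu(f)$ exactly as before; the key separation property (Proposition~\ref{teo2}) depends only on the triple structure of the code, not on the CME, so it transfers verbatim. A bisection using the threshold oracle then reads the first $O(\log n)$ places of $\mu(f)$ in polynomial time, because each of the $O(\log n)$ queries costs at most a polynomial in $n$ by the timing axiom; reconstructing $f(|w|)$ and evaluating $B$ gives $A$, so the oracle decides $P/\log*$ in the infinite and unbounded cases. For fixed precision the trinomial and Chebyshev argument of Proposition~\ref{bpplog*} carries over once the three outcomes (two flags and ``time out'') are identified, yielding $BPP//\log*$.

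The harder half is the matching upper bound, showing that no such oracle exceeds $P/\log*$ (respectively $BPP//\log*$). Here I would argue that a polynomial-time computation makes only polynomially many queries and, by the timing axiom, every query returning within the time budget certifies the sign of $\mu-z$ only for $z$ outside an interval of width $2^{-O(\log n)}$ about $\mu$; hence the whole transcript of answers is a deterministic (or, in the fixed-precision case, randomised) function of the first $O(\log n)$ binary places of $\mu$. Packaging those places as a prefix advice string of length $O(\log n)$ and simulating the protocol's clock with a time-constructible schedule then places the decided language in $P/\log*$ (respectively $BPP//\log*$). The main obstacle is twofold. First, one must pin down the definition of ``realistic'' so that the exponential timing lower bound and the bounded-information structure of the answers are guaranteed rather than merely assumed. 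Second, the upper bound must be proved uniformly: one has to show that the polynomially many answers cannot be leveraged to extract more than $\log n$ bits of $\mu$, even by adaptive querying and by clever exploitation of the timing information itself. Once the abstract theorem is established from the axioms, the conjecture reduces to verifying the axioms for each experiment in the catalogue of Section~\ref{conjec}, which I expect to be a routine case analysis.
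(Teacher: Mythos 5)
There is a fundamental mismatch here: the statement you are trying to prove is explicitly a \emph{conjecture} in the paper, and the paper offers no proof of it. The only support the authors give is empirical: the concrete lower-bound theorems for the CME (Theorems \ref{teo4}, \ref{teo6} and \ref{thmbpplog*}), the report that a ``rounded'' SME behaves the same way, and an informal survey of standard experiments whose timing is exponential in the query size. Your proposal is therefore not comparable to a proof in the paper; it is a research programme, and the two obstacles you yourself flag are exactly the reasons the statement remains a conjecture. First, ``realistic physical theory'' has no formal definition --- the authors say only that they have \emph{begun} the refinement and formalisation of the underlying Conjecture \ref{bctconjecture} --- so any axiomatisation you choose (your ``threshold oracle'') proves a theorem about your axioms, not the conjecture as stated; verifying the axioms for ``each experiment in the catalogue'' cannot be a routine case analysis when the catalogue is open-ended by design.

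Second, and more concretely, your upper-bound half outruns what the paper establishes even for the single concrete case of the CME: Section \ref{Plog} explicitly \emph{leaves open} the problem of an upper bound on what these machines decide in polynomial time, deferring it to another paper, and the comparison table in Section \ref{cme&sme} is labelled ``complexity class lower bound'' for a reason. Your sketch that ``the whole transcript of answers is a deterministic function of the first $O(\log n)$ binary places of $\mu$'' is not justified: an adaptive machine can submit long dyadic queries $z$ and learn not only the sign of $\mu - z$ but also whether the experiment returned at all within a sub-budget, and in the fixed-precision case the \emph{distribution} over the three outcomes depends on $\mu$ more finely than its first $O(\log n)$ bits (it depends on $\mu$ through $p$, $q$, $r$ exactly). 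Ruling out that this leaks more than logarithmic advice is precisely the hard, unproved direction. Your lower-bound half, by contrast, is a faithful abstraction of the paper's Sections \ref{code}--\ref{BPPlog} and would likely go through for any oracle satisfying your axioms, provided you add the (nontrivial) axiom that the equipment can realise an arbitrary real parameter $\mu(f)\in(0,1)$, which is a statement about the equipment and not about query timing.
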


Thus, our experiences with a portfolio of physical oracles suggests that the CME introduces limiting results of wide relevance to the physical sciences. 

\subsection{On measurement}\label{uncertainty}
In \textit{The Science of Mechanics}, Ernst Mach observes: ``The laws of impact were the occasion of the enunciation of the most important principles of mechanics, and furnished also the first examples of the application of such principles.''  It seems that the same laws, by governing the CME, introduce some new properties of the concept of measurement in mechanics. This type of experiment to measure mass is at the heart of mechanics --- a generalization of the collider experiment can be used to measure the mass of a star or of a planet, measures that cannot be done with the balance scale. 

Our measurement of {\em inertial mass} is {\em fundamental}, not {\em derived}:  according to Hempel \cite{hempel:52}): 

``By derived measurement we understand the determination of a metric scale by means of criteria which presuppose at least one previous scale measurement. It will prove helpful to distinguish between derived measurement by stipulation and derived measurement by law. The former consists in defining a ``new'' quantity by means of others, which are already available; it is illustrated by the definition of the average speed of a point during a certain period of time as the quotient of the distance covered and the length of the period of time. Derived measurement by law, on the other hand, does not introduce a ``new'' quantity but rather an alternative way of measuring one that has been previously introduced.''

We do not use a scale of distance, neither a scale of time. The algorithm only makes a fundamental direct measurement of mass (to see a deep discussion into this subject, see \cite{beggsetal:09f}).

Relevant to our context of derived measurements, Eddington writes on the fine-structure constant in \cite{eddington:33}:

``There has been much discussion whether the true value is 137.0 or 137.3; both values claim to be derived from observation. The latter, called the ``spectroscopic value'', is preferred by many physicists. It is, however, misleading to call these determinations \textit{observational values}, for the observations are only a substratum; the spectroscopic value in particular is based on a rather complex theory and is certainly not to be treated as a ``hard fact'' of observation.''

Actually, in most situations measurement is made by comparisons between observables, so what we describe applies not only to the measurement of {\em mass}, but widely in the physical sciences.

The idea of uncertainty is associated with \textit{Heisenberg's indeterminacy principle} and is the subject of an enormous philosophical discussion about quantum mechanics. Unusually, in \cite{popper:50a,popper:50b}, Popper struggles with the notion of indeterminacy in classical and quantum mechanics. The nature of our uncertainty is different, but its philosophical implications are similar.

Principle 6 is intriguing and may prove influential. Turing's analysis of people representing information symbolically and following a fixed procedure is an example of an anthropomorphic principle underpinning a scientific theory: models of computability rooted in human action are relevant to computing technologies of the past, present or future.  It seems to us to be a beautiful idea to model the experimenter following an experimental procedure as a new form of program based on a physical theory $T$ that can be coded as a Turing machine; and this idea resonates with the prominent and essential role of computers in performing experiments. It leads us believe that measurability in Physics is subject to \textit{laws} which are the effects of the limits of computability and computational complexity.  Our algorithmic model of experiments imposes limitations on the physics we used to describe it. Not all masses can be known, not because of the limitations in measurements due to experimental errors, but because of essentially internal logical limitations of the theory. The mathematics of computation theory does not allow the reading of bits of physical quantities beyond a certain limit. Quantities cannot be measured with infinite precision, not because of the limitations of the physical apparatus but, more deeply, because of computational reasons.  These unmeasurabilities allow for the definition of quanta of energy in the classical physical world. 

We believe that the computational model of experimental measurement, here represented by the collider, demonstrates the existence of new and fundamental epistemic constraints in physics.

Edwin Beggs, Jos\'e F\'elix Costa and John Tucker would like to thank EPSRC for their support under grant EP/C525361/1. The research of Jos\'e F\'elix Costa is also supported by FEDER and FCT Plurianual 2007 .

\bibliographystyle{plain}
\bibliography{references}

\newpage

\end{document}